%
%
%
%
%
%

\documentclass[12pt]{amsart}
\usepackage{amssymb,amsmath,amsthm,verbatim,empheq,color}
\usepackage{xypic,youngtab}
\usepackage{hyperref}

\newcommand{\old}[1]{}
\addtolength{\hoffset}{-2cm}
\addtolength{\textwidth}{4cm}
\addtolength{\voffset}{-1cm}
\addtolength{\textheight}{1.5cm}

\theoremstyle{plain}
\newtheorem{thm}{Theorem}[section]
\newtheorem{lem}[thm]{Lemma}
\newtheorem{conj}[thm]{Conjecture}
\newtheorem{cor}[thm]{Corollary}

\newtheorem{prop}[thm]{Proposition}
\theoremstyle{definition}
\newtheorem{defn}[thm]{Definition}
\newtheorem{remark}[thm]{Remark}
\newtheorem{ex}[thm]{Example}
\newtheorem{rk}[thm]{Remark}
\newtheorem{qn}[thm]{Question}

\def\CC{{\mathbb C}}
\def\QQ{{\mathbb Q}}
\def\RR{{\mathbb{R}}}
\def\FF{{\mathbb{F}}}
\def\ZZ{{\mathbb{Z}}}

\def\xx{{\mathbf{x}}}

\def\one{{\mathbf{1}}}

\def\UUU{{\mathcal{U}}}
\def\LLL{{\mathcal{L}}}
\def\TTT{{\mathcal{T}}}

\def\mmm{{\mathfrak{m}}}

\def\rank{{\operatorname{rank}}} 
\def\shape{{\operatorname{shape}}} 
\def\maj{{\operatorname{maj}}}

\def\Stab{{\operatorname{Stab}}}
\def\ch{{\operatorname{ch}}}  

\def\opp{{\operatorname{op}}} 
\def\im{{\operatorname{im}}} 
\def\Lie{{\operatorname{Lie}}} 
\def\WH{{\operatorname{W}}}
 
\def\Hom{{\operatorname{Hom}}} 
\def\Conf{{\operatorname{Conf}}} 
\def\Schur{{\operatorname{\mathbb{S}}}}


\newcommand{\cellsize}{20}
\newlength{\cellsz} \setlength{\cellsz}{\cellsize\unitlength}
\newsavebox{\cell}
\sbox{\cell}{\begin{picture}(\cellsize,\cellsize)
\put(0,0){\line(1,0){\cellsize}}
\put(0,0){\line(0,1){\cellsize}}
\put(\cellsize,0){\line(0,1){\cellsize}}
\put(0,\cellsize){\line(1,0){\cellsize}}
\end{picture}}
\newcommand\cellify[1]{\def\thearg{#1}\def\nothing{}%
\ifx\thearg\nothing
\vrule width0pt height\cellsz depth0pt\else
\hbox to 0pt{\usebox{\cell} \hss}\fi%
\vbox to \cellsz{
\vss
\hbox to \cellsz{\hss$#1$\hss}
\vss}}
\newcommand\tableau[1]{\vtop{\let\\\cr
\baselineskip -16000pt \lineskiplimit 16000pt \lineskip 0pt
\ialign{&\cellify{##}\cr#1\crcr}}}
%

\newcommand{\kellsize}{20}
\newlength{\kellsz} \setlength{\kellsz}{\kellsize\unitlength}
\newsavebox{\kell}
\sbox{\kell}{\begin{picture}(\kellsize,\kellsize)
\put(0,0){\line(1,0){\kellsize}}
\put(0,0){\line(0,1){\kellsize}}
\put(\kellsize,0){\line(0,1){\kellsize}}
\put(0,\kellsize){\line(1,0){\kellsize}}
\end{picture}}
\newcommand\kellify[1]{\def\thearg{#1}\def\nothing{}%
\ifx\thearg\nothing
\vrule width0pt height\kellsz depth0pt\else
\hbox to 0pt{\usebox{\kell} \hss}\fi%
\vbox to \kellsz{
\vss
\hbox to \kellsz{\hss$#1$\hss}
\vss}}
\newcommand\ktableau[1]{\vtop{\let\\\cr
\baselineskip -16000pt \lineskiplimit 16000pt \lineskip 0pt
\ialign{&\kellify{##}\cr#1\crcr}}}

\newcommand{\sellsize}{10}
\newlength{\sellsz} \setlength{\sellsz}{\sellsize\unitlength}
\newsavebox{\sell}
\sbox{\sell}{\begin{picture}(\sellsize,8)
\put(0,0){\line(1,0){\sellsize}}
\put(0,0){\line(0,1){\sellsize}}
\put(\sellsize,0){\line(0,1){\sellsize}}
\put(0,\sellsize){\line(1,0){\sellsize}}
\end{picture}}
\newcommand\sellify[1]{\def\thearg{#1}\def\nothing{}%
\ifx\thearg\nothing
\vrule width0pt height\sellsz depth0pt\else
\hbox to 0pt{\usebox{\sell} \hss}\fi%
\vbox to \sellsz{
\vss
\hbox to \sellsz{\hss$#1$\hss}
\vss}}
\newcommand\stableau[1]{\vtop{\let\\\cr
\baselineskip -16000pt \lineskiplimit 16000pt \lineskip 0pt
\ialign{&\sellify{##}\cr#1\crcr}}}

\newcommand{\ssellsize}{5}
\newlength{\ssellsz} \setlength{\ssellsz}{\ssellsize\unitlength}
\newsavebox{\ssell}
\sbox{\ssell}{\begin{picture}(\ssellsize,4)
\put(0,0){\line(1,0){\ssellsize}}
\put(0,0){\line(0,1){\ssellsize}}
\put(\ssellsize,0){\line(0,1){\ssellsize}}
\put(0,\ssellsize){\line(1,0){\ssellsize}}
\end{picture}}
\newcommand\ssellify[1]{\def\thearg{#1}\def\nothing{}%
\ifx\thearg\nothing
\vrule width0pt height\sellsz depth0pt\else
\hbox to 0pt{\usebox{\ssell} \hss}\fi%
\vbox to \ssellsz{
\vss
\hbox to \ssellsz{\hss$#1$\hss}
\vss}}
\newcommand\sstableau[1]{\vtop{\let\\\cr
\baselineskip -16000pt \lineskiplimit 16000pt \lineskip 0pt
\ialign{&\ssellify{##}\cr#1\crcr}}}




\title[Representation stability for configuration spaces in $\RR^d$]
{Representation stability for cohomology of configuration spaces in $\RR^d$}

\author{Patricia Hersh} 
\address{Department of Mathematics, North Carolina State University, Raleigh, NC 27695}
\email{plhersh@ncsu.edu}

\author{Victor Reiner}\address{Department of Mathematics, University of Minnesota, Minneapolis, MN 55455}
\email{reiner@math.umn.edu}

\subjclass[2010]{55R80, 05E45, 05E05, 20C30, 06A07}
\thanks{First author supported by NSF-DMS 1200730. Second author partially
supported by NSF-DMS 1001933}

\begin{document}

\begin{abstract}
This paper studies representation stability in the sense of Church
and Farb for representations of the symmetric group $S_n$
on the cohomology of the configuration space of $n$ ordered points
in $\RR^d$.  This cohomology is known to vanish outside of dimensions
divisible by $d-1$; it is shown here that the $S_n$-representation on the
$i(d-1)^{st}$ cohomology stabilizes sharply at $n=3i$ (resp. $n=3i+1$)
when $d$ is odd (resp. even).   

The result comes from analyzing $S_n$-representations known to
control the cohomology:  the Whitney homology of set partition lattices for $d$ even,
and the higher Lie representations for $d$ odd.  A similar analysis shows
that the homology of any rank-selected subposet in the partition lattice stabilizes by $n\geq 4i$,
where $i$ is the maximum rank selected.  

Further properties of the Whitney homology and
more refined stability statements for $S_n$-isotypic components are 
also proven, including conjectures of J. Wiltshire-Gordon.
\end{abstract}

\keywords{representation, 
stability, 
configuration space,
Whitney homology, 
rank-selection, 
set partition, 
Lie character,
Orlik-Solomon algebra,
plethysm, 
symmetric function}

\maketitle


\section{Introduction}

Much has been written recently on {\it representation stability}, 
in papers of Church, Ellenberg,  Farb, and others 
\cite{Church, Church-Farb,CEF,  CEF2, CEFN, Farb, Putman-Sam, Sam-Snowden,
Stembridge, Vakil-M.Wood, Wilson},
particularly, for sequences of (complex, finite-dimensional)
$S_n$-representations $\{V_n\}$.
Recall that the irreducible representations $V^\lambda$ of $S_n$ are
indexed by integer partitions $\lambda=(\lambda_1 \geq \cdots \geq \lambda_\ell)$ of
$n=|\lambda|:=\sum_i \lambda_i$.
Say that $\{V_n\}$ {\it stabilizes beyond $n=n_0$} if
the unique $S_n$-irreducible decomposition 
$$
V_{n_0}=\bigoplus_{\lambda: |\lambda|=n_0} \left( V^\lambda \right)^{\oplus c_\lambda}
$$
determines $V_n$ for every $n \geq n_0$ as follows:
$$
V_n=\bigoplus_{\lambda:|\lambda|=n_0} 
\left(
V^{(\lambda_1+(n-n_0),\lambda_2,\ldots,\lambda_\ell)} 
\right)^{\oplus c_\lambda}.
$$
If $n_0$ is minimal with the above property, say that $\{V_n\}$ {\it stabilizes sharply at $n_0$}.

Our starting point was the following celebrated result of T. Church on 
the {\it $n^{th}$ (ordered) configuration space} of a topological space $X$
$$
\Conf(n,X):=\{ (x_1,\ldots,x_n) \in X^n: x_i \neq x_j \text{ for }1 \leq i < j \leq n\}.
$$
The $S_n$-action permuting the coordinates in $X^n$ restricts to  
$\Conf(n,X)$, giving rise to $S_n$-representations on the
cohomology $H^*(\Conf(n,X)):=H^*(\Conf(n,X),\QQ)$ with rational coefficients\footnote{All cohomology groups
in this paper will be taken with coefficients in $\QQ$.}.

\vskip.1in
\noindent
{\bf Theorem.} (Church \cite[Theorem 1]{Church})
{\it 
Fix $i \geq 1$.  For a connected, orientable
$d$-manifold $X$ with $H^*(X)$ finite-dimensional, the sequence 
of $S_n$-representations $\{ H^i(\Conf(n,X)) \}$ vanishes unless $d-1$ divides $i$,
in which case it stabilizes beyond
$$
\begin{cases}
 n=2i &\text{ for }d \geq 3,\\
 n=4i &\text{ for }d = 2.
\end{cases}
$$
}

\vskip.1in
Our first main result improves this when $X=\RR^d$, giving the sharp onset of stabilization.  

\begin{thm}
\label{3i+1-bound-thm}
Fix integers $d \geq 2$ and $i \geq 1$.  Then  $H^i(\Conf(n,\RR^d))$ vanishes unless $d-1$ divides $i$,
in which case it stabilizes sharply at 
$$
\begin{cases}
 n=3 \frac{i}{d-1} &\text{ for }d \geq 3\text{ odd},\\
 n=3\frac{i}{d-1}+1 &\text{ for }d \geq 2\text{ even}.
\end{cases}
$$
In particular, $H^i(\Conf(n,\RR^2))$ stabilizes sharply at $n=3i+1$.
\end{thm}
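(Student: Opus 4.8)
The plan is to replace the cohomology by the combinatorial $S_n$-representations that compute it, and then run a Pieri-rule stability analysis fine enough to locate the \emph{sharp} onset.

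\emph{Step 1: reduction and a finite decomposition.} As established in the earlier sections, $H^*(\Conf(n,\RR^d))$ is the quotient of the free graded-commutative algebra on classes $\omega_{ij}$ of degree $d-1$ by the Arnol'd relations; it therefore vanishes outside degrees divisible by $d-1$, and for $i=k(d-1)$ the $S_n$-module $H^{k(d-1)}(\Conf(n,\RR^d))$ is the $k$-th Whitney homology $\WH_k(\Pi_n)$ of the partition lattice when $d$ is even, and its ``higher Lie'' analogue when $d$ is odd. In both cases $H^{k(d-1)}(\Conf(n,\RR^d))$ is a sum, over set partitions $P$ of $\{1,\dots,n\}$ whose non-singleton blocks $B_1,\dots,B_\ell$ satisfy $\sum_j(|B_j|-1)=k$, of $\bigotimes_j R_{|B_j|}$, where $R_b=\tilde H_{b-3}(\Pi_b)=\Lie_b\otimes\mathrm{sgn}_b$ for $d$ even and $R_b=\Lie_b$ for $d$ odd, with a Koszul sign governing permutations of equal-size blocks. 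Grouping these set partitions by the multiset of non-singleton block sizes yields, for fixed $k$ and varying $n$,
\[
H^{k(d-1)}(\Conf(n,\RR^d)) \;\cong\; \bigoplus_{\lambda}\, \Ind_{S_{|\lambda|}\times S_{n-|\lambda|}}^{S_n}\!\bigl(W_\lambda\boxtimes\mathrm{triv}_{n-|\lambda|}\bigr),
\]
where $\lambda$ ranges over the finitely many partitions with all parts $\ge 2$ and $|\lambda|-\ell(\lambda)=k$ (so $|\lambda|\le 2k$), and $W_\lambda$ is the $S_{|\lambda|}$-module induced from $\bigotimes_j R_{\lambda_j}$ together with its equal-block symmetry; on Frobenius characteristics $\ch W_\lambda=\prod_b\chi_{m_b}[\ch R_b]$, where $m_b$ is the multiplicity of $b$ in $\lambda$ and $\chi_{m_b}\in\{h_{m_b},e_{m_b}\}$ is fixed by the Koszul sign. (Packaged over all $n$ and $k$ this is a plethystic-exponential identity.)

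\emph{Step 2: stability via Pieri.} I would use the elementary fact that for a partition $\nu$ the sequence $\{\Ind_{S_{|\nu|}\times S_{n-|\nu|}}^{S_n}(V^\nu\boxtimes\mathrm{triv})\}_n$ stabilizes sharply at $n=|\nu|+\nu_1$ (by Pieri, once $n-|\nu|\ge\nu_1$ the added horizontal strip can be absorbed into the first row and the decomposition is pinned down), and that in such a sequence the multiplicity of any fixed stable constituent is nondecreasing in $n$. A finite direct sum of such sequences is stable beyond the maximum of the individual sharp onsets, and — since all the multiplicities involved are nonnegative and nondecreasing — it fails to stabilize before that maximum. Hence the sharp onset of $H^{k(d-1)}(\Conf(n,\RR^d))$ equals $\max_\lambda\bigl(|\lambda|+w(W_\lambda)\bigr)$, where $w(U):=\max\{\nu_1 : V^\nu\text{ occurs in }U\}$.

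\emph{Step 3: the width estimate (the crux).} It remains to show $\max_\lambda(|\lambda|+w(W_\lambda))$ equals $3k$ for $d$ odd and $3k+1$ for $d$ even, attained only at $\lambda=(2^k)$. The inputs are: (i) $w(\Lie_b)=b-1$ for $b\ge 2$, and $w(\Lie_b\otimes\mathrm{sgn}_b)=b-1$ for $b\ge 3$ but $w(\Lie_2\otimes\mathrm{sgn}_2)=w(\mathrm{triv}_2)=2$, from the classical facts that $\Lie_b$ contains $V^{(b-1,1)}$ and (for $b\ge3$) $V^{(2,1^{b-2})}$, but contains no $V^{(b)}$ and no $V^{(1^b)}$ for $b\ge 3$; (ii) $w(h_m[f])=m\,w(f)$ for any positive $f$, so in particular $w(h_m[\ch R_b])=m(b-1)$ and $w(e_m[\ch R_b])=m(b-1)$ for all relevant $b\ge3$, whereas $w(e_m[\ch R_2])=w(e_m[h_2])=m+1$ in the $d$ even case; (iii) a Littlewood--Richardson estimate giving $w\bigl(\prod_b\chi_{m_b}[\ch R_b]\bigr)=\sum_b w(\chi_{m_b}[\ch R_b])$. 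Feeding these into the constraint $\sum_b(b-1)m_b=k$ and using $|\lambda|=\sum_b bm_b=k+\sum_b m_b$ gives $|\lambda|+w(W_\lambda)\le 2k+\sum_b m_b\le 3k$ for $d$ odd and $|\lambda|+w(W_\lambda)\le 2k+\sum_b m_b+[\,m_2>0\,]\le 3k+1$ for $d$ even, with equality in either case forcing every non-singleton block to have size $2$, i.e.\ $\lambda=(2^k)$. For that $\lambda$ one computes $\ch W_{(2^k)}=h_k[e_2]=\sum_{\mu\vdash k}s_{(2\mu)'}$, which contains $s_{(k,k)}$ and gives onset $2k+k=3k$ when $d$ is odd, and $\ch W_{(2^k)}=e_k[h_2]$, which contains $s_{(k+1,1^{k-1})}$ and gives onset $2k+(k+1)=3k+1$ when $d$ is even.

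\emph{Step 4: sharpness, and where the difficulty is.} By Step 2 the sequence is stable beyond $n=3k$ (resp.\ $3k+1$), and it is not stable any earlier: the $\lambda=(2^k)$ summand alone contributes the constituent $V^{(n-2k,\,k,\,k)}$ (resp.\ $V^{(n-2k,\,k+1,\,1^{k-1})}$) starting exactly at $n=3k$ (resp.\ $n=3k+1$) by the Pieri rule, so the multiplicity of that stable constituent in the whole module strictly increases there, and by the monotonicity in Step 2 no other summand can cancel the increase. With $k=i/(d-1)$ this is precisely the assertion of the theorem, and in particular it gives the sharp onset $n=3i+1$ for $d=2$. The main obstacle is Step 3: controlling the width $w(\,\cdot\,)$ through the plethysms $h_m[\,\cdot\,]$ and $e_m[\,\cdot\,]$, through Littlewood--Richardson products, and through the Koszul signs on equal blocks — the anomalous width of size-$2$ blocks in the $d$ even case being exactly what produces the extra ``$+1$'' and the even/odd dichotomy in the final answer.
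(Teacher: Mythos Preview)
Your proof is correct and follows essentially the same route as the paper: decompose the cohomology as $M_n(\widehat{\Lie}^i)$ or $M_n(\widehat{\WH}^i)$, invoke the Pieri-rule stability criterion (the paper's Lemma~\ref{Hemmer's-lemma}), and bound the width of the relevant plethysms via Frobenius reciprocity for block sizes $\geq 3$ together with Littlewood's identities for $h_m[e_2]$ and $e_m[h_2]$ when the block size is $2$, with sharpness coming from $\lambda=(2^k)$. This is exactly the content of the paper's Theorem~\ref{plethysm-and-Whitney-bounds-cor} feeding into the short proof in Section~\ref{3i+1-bound}; your Step~3 reproduces parts~(a)--(e) there, and your monotonicity remark in Step~4 is the inequality~\eqref{stability-monotonicity} underlying Lemma~\ref{Hemmer's-lemma}.
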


\noindent
In fact, one has finer information 
about the onset of stabilization for individual $S_n$-irreducible
multiplicities  in $H^i(\Conf(n,\RR^d))$; 
see Theorem~\ref{refined-3i+1-bound-thm}.

There are several motivations to focus on the manifolds
$X=\RR^d$ in Church's result. 

\begin{itemize}
\item
His analysis for more general manifolds relies on the 
stability properties for the case of $X = \RR^d$ as a key input, 
via a result of Totaro \cite{Totaro};
see Section~\ref{fastest-stabilization-question}.

\item
One can identify $\Conf(n,\RR^2)$ with
the complement in $\CC^n$ of the reflection
arrangement of type $A_{n-1}$,
an Eilenberg-MacLane $K(PB_n,1)$ space for the 
pure braid group $PB_n$ on $n$ strands.  Thus
$H^*(\Conf(n,\RR^2))$ computes the group cohomology $H^*(PB_n;\QQ)$.
\item
The $S_n$-representation on $H^*(\Conf(n,\RR^2))$
plays a role in counting {\it polynomial statistics} 
on squarefree monic polynomials
in $\FF_q[x]$, the focus of further work of 
Church, Ellenberg and Farb  in \cite{CEF2}, as well as work of 
Matchett-Wood and Vakil in \cite{Vakil-M.Wood}.
In fact, our results will give 
an improvement on the stable range of 
$H^*(\Conf(n,\RR^2))$ that leads to a better {\it power-saving bound}
(see \cite[\S 1.1]{CEF2}) for the convergence rate of these counts --
see Remark~\ref{power-saving-details-remark}.
\end{itemize}

The proof of Theorem ~\ref{3i+1-bound-thm}
in Section~\ref{3i+1-bound} starts with the 
descriptions of the $S_n$-representations on
$
H^i(\Conf(n,\RR^d)).
$
For $d=2$, this is known from work of Arnol'd \cite{Arnold} and 
of Lehrer and Solomon \cite{LehrerSolomon}.
For arbitrary $d \geq 2$, such descriptions go back to work of 
Cohen \cite[Chap. III]{CohenLadaMay}; see also
Cohen and Taylor \cite{CohenTaylor}.  
We will use a formulation
for $d \geq 2$ closer to that of Sundaram and Welker \cite{SundaramWelker}.  
The descriptions are 
in terms of {\it higher Lie characters} $\Lie_\lambda$ when $d$ is odd,
and the {\it Whitney homology} of the lattice $\Pi_n$ of set partitions of $\{1,2,\ldots,n\}$
when $d$ is even;
see Sections~\ref{cohomology-section}, 
\ref{set-partition-review-subsection}, and 
\ref{higher-Lie-section} for definitions.  The key to
stability is recasting the descriptions in the following form\footnote{This was pointed out in the case $d=2$ 
by Church and Farb \cite[\S 4.1]{Church-Farb} using different language.}: 
 $$
H^{i(d-1)}(\Conf(n,\RR^d)) \cong
\begin{cases} 
\displaystyle\bigoplus_{m=i+1}^{2i} M_n(\widehat{\Lie}_m^i) & \text{ for }d\text{ odd}\\
\displaystyle\bigoplus_{m=i+1}^{2i} M_n(\widehat{\WH}_m^i) & \text{ for }d\text{ even}.
 \end{cases}
$$ 
Here $\widehat{\Lie}^i_m, \widehat{\WH}^i_m$ are certain subrepresentations\footnote{These are the subrepresentations carried by {\it FI-generators} 
of the FI-modules $\{H^i(\Conf(n,\RR^d))\}$ as in \cite{CEF}.}
of higher Lie characters and Whitney homology, and 
$\chi \mapsto M_n(\chi)$ is this operation
taking $S_m$-representations to $S_n$-representations:
$$
M_n(\chi)=\begin{cases}
\left( \chi \otimes \one_{S_{n-m}} \right) 
  \uparrow_{S_{m} \times S_{n-m}}^{S_n}& \text{ if }m \leq n,\\
0 & \text{ if }m < n.
\end{cases}
$$
Sequences of $S_n$-representations of the form $\{M_n(\chi)\}$ were shown already by 
Church \cite{Church} to exhibit representation stability.
We will show in Lemma~\ref{refined-Hemmer-lemma} that the onset
of stability is controlled by bounds on  $|\lambda|+\lambda_1$ 
for $\lambda$ arising
in the irreducible expansion $\chi=\sum_\lambda c_\lambda \chi^\lambda$.
The crux of our analysis is to bound the irreducible expansions of the
characters $\widehat{\Lie}^i_n, \widehat{\WH}^i_n$;  this is achieved 
in Section~\ref{bounding-characters-section},
utilizing symmetric functions and {\it plethysm} 
(reviewed in Section~\ref{symmetric-function-review-section}).

It remains an open question 
(see Question~\ref{Whitney-generators-question}) 
to give explicit irreducible decompositions 
for $\widehat{\Lie}^i_n, \widehat{\WH}^i_n$ in general\footnote{Some data on their decompositions 
is given in Tables~\eqref{Lie-hat-table}, \eqref{Wiltshire-Gordon-table} of 
Appendix~\ref{appendix-section}.}.
However, in Theorem~\ref{Whitney-generating-tableaux-theorem} below we {\it do}
give explicit irreducible decompositions for the sums
$
\widehat{\Lie}_n:=\sum_i \widehat{\Lie}^i_n
$ 
and
$
\widehat{\WH}_n:=\sum_i \widehat{\WH}^i_n.
$
It is here that one discovers a close connection to {\it derangements}, i.e., 
fixed-point free permutations.  It turns out 
(see Remark~\ref{derangement-remark}) that these $S_n$-representations
have the following properties:
\begin{itemize}
\item
$\widehat{\Lie}_n, \widehat{\WH}_n$ have degree equal to the number $d_n$
of all derangements in $S_n$.
\item $\widehat{\Lie}_n^i, \widehat{\WH}^i_n$ have degree $d_n^{n-i}$, 
the number of derangements in $S_n$
with $n-i$ cycles.  
\end{itemize}
After writing down product generating functions for
the Frobenius characters of $\widehat{\Lie}_n^i, \widehat{\WH}^i_n$ 
in terms of power sum symmetric functions (Corollary~\ref{hat-product-formulas}),
we use the generating functions in Section~\ref{whole-row-section}
to prove representation-theoretic lifts of a
well-known derangement recurrence
$$
d_n = n d_{n-1} + (-1)^n \text{ for }n \geq 2.
$$

\begin{thm}
\label{whole-row-inductive-description}
Letting $\widehat{\Lie}_0:=\widehat{\WH}_0:=\one_{S_0}, \widehat{\Lie}_1:=\widehat{\WH}_1:=0$ by convention, then for $n \geq 1$,
\begin{align}
\label{DesWachs-derangement-recurrence}
\widehat{\Lie}_n &= \widehat{\Lie}_{n-1}\uparrow_{S_{n-1}}^{S_n} + (-1)^n 
  \epsilon_n,\\
\label{Whitney-derangement-recurrence}
\widehat{\WH}_n &= \widehat{\WH}_{n-1}\uparrow_{S_{n-1}}^{S_n} + (-1)^n 
  \tau_n.
\end{align}
where $\epsilon_n$ is the sign character of $S_n$,
and $\tau_n$ is this {\bf virtual} $S_n$-character of degree
one: 
$$
\tau_n:=
\begin{cases}
\one_{S_n} &\text{ for }n=0,1,2,3,\\
\chi^{(3,1^{n-3})}-\chi^{(2,2,1^{n-4})} &\text{ for }n \geq 4.
\end{cases}
$$
\end{thm}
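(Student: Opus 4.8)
The plan is to translate everything through the Frobenius characteristic $\ch\colon\bigoplus_n R_{\QQ}(S_n)\to\Lambda_{\QQ}$ and recognize each recurrence as a single identity of symmetric-function generating series that follows from the product formulas of Corollary~\ref{hat-product-formulas}. Recall that $\ch$ is a ring isomorphism with $\ch(\one_{S_n})=h_n$, $\ch(\epsilon_n)=e_n$, $\ch(\chi^\lambda)=s_\lambda$, and that it carries induction product to multiplication; in particular, for any virtual $S_{n-1}$-character $V$,
$$
\ch\bigl(V\uparrow_{S_{n-1}}^{S_n}\bigr)=\ch(V)\cdot\ch(\one_{S_1})=p_1\cdot\ch(V),
$$
since $h_1=p_1$. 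Set $L:=\sum_{n\ge 0}\ch(\widehat{\Lie}_n)$ and $W:=\sum_{n\ge 0}\ch(\widehat{\WH}_n)$ in $\Lambda_{\QQ}$. Applying $\ch$ to \eqref{DesWachs-derangement-recurrence} and \eqref{Whitney-derangement-recurrence}, summing over $n\ge 1$, and using the conventions $\widehat{\Lie}_0=\widehat{\WH}_0=\one_{S_0}$ (so $\ch$ of these is $h_0=1$) and $\widehat{\Lie}_1=\widehat{\WH}_1=0$, one finds that the two recurrences are equivalent, respectively, to
$$
(1-p_1)\,L=\sum_{n\ge 0}(-1)^n e_n
\qquad\text{and}\qquad
(1-p_1)\,W=\sum_{n\ge 0}(-1)^n\ch(\tau_n).
$$
Conversely, extracting the degree-$n$ part of either identity for $n\ge 1$ recovers the stated recurrence (the $n=1$ case being consistent since $\epsilon_1=\tau_1=\one_{S_1}$), while the degree-$0$ part is the trivial identity $1=1$.

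The next step is to put the right-hand sides in closed form. Write $E^{-}:=\sum_{n\ge 0}(-1)^n e_n=\prod_i(1-x_i)=\exp\!\bigl(-\sum_{k\ge 1}p_k/k\bigr)$, so the first right-hand side is already $E^{-}$. For the second, I would compute $\ch(\tau_n)$ uniformly: by the dual Jacobi--Trudi determinant, for $n\ge 4$ one has $s_{(3,1^{n-3})}=e_n-e_1e_{n-1}-e_2e_{n-2}+e_1^2e_{n-2}$ and $s_{(2,2,1^{n-4})}=e_2e_{n-2}-e_1e_{n-1}$, hence
$$
\ch(\tau_n)=s_{(3,1^{n-3})}-s_{(2,2,1^{n-4})}=e_n+(e_1^2-2e_2)\,e_{n-2}=e_n+p_2\,e_{n-2}.
$$
One then checks directly (using $h_2=\tfrac12(p_1^2+p_2)$, $h_3=\tfrac16(p_1^3+3p_1p_2+2p_3)$, etc.) that the formula $\ch(\tau_n)=e_n+p_2e_{n-2}$ also equals $h_n$ for $n=0,1,2,3$, so it holds for all $n\ge 0$. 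Summing, $\sum_{n\ge 0}(-1)^n\ch(\tau_n)=E^{-}+p_2E^{-}=(1+p_2)E^{-}$, and the two identities to prove become
$$
(1-p_1)\,L=E^{-},\qquad (1-p_1)\,W=(1+p_2)\,E^{-}.
$$

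It remains to verify these from Corollary~\ref{hat-product-formulas}. Summing its product formulas over $i$ (equivalently, setting its grading variable to $1$) yields product expressions for $L$ and $W$ in the power sums, and a short manipulation — clearing the factor $1/(1-p_1)$ and recognizing $\exp(-\sum_k p_k/k)=E^{-}$, together with the extra factor $1+p_2$ in the $\widehat{\WH}$ case — identifies them with $E^{-}/(1-p_1)$ and $(1+p_2)E^{-}/(1-p_1)$. As a consistency check, the exponential specialization $p_1\mapsto t$, $p_{k\ge 2}\mapsto 0$ sends both series to $e^{-t}/(1-t)=\sum_n d_n t^n/n!$, recovering the classical $d_n=nd_{n-1}+(-1)^n$. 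The only real subtlety lies in the $\widehat{\WH}$ case: one must handle $\tau_n$ as a genuine element of the virtual representation ring, keep track of the small-$n$ exceptions $n\le 3$ in its definition, and — crucially — confirm that the product formula for $W$ carries exactly the factor $1+p_2$ relative to that for $L$. That factor is precisely what converts the single sign-character error term in \eqref{DesWachs-derangement-recurrence} into the two-term error $\chi^{(3,1^{n-3})}-\chi^{(2,2,1^{n-4})}$ in \eqref{Whitney-derangement-recurrence}, and pinning it down is the heart of the argument.
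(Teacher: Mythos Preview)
Your proposal is correct and follows essentially the same route as the paper: translate via $\ch$, set $u=1$ in the product formulas of Corollary~\ref{hat-product-formulas} to obtain $(1-p_1)L=E^-$ and $(1-p_1)W=(1+p_2)E^-$, and then identify $\tau_n$ with $e_n+p_2e_{n-2}$ via the dual Jacobi--Trudi determinant. The only step you leave implicit is the M\"obius-function evaluation $a_\ell(1)=\delta_{\ell,1}$ and $a_\ell(-1)=-\delta_{\ell,1}+\delta_{\ell,2}$ that collapses the infinite products to $(1-p_1)^{-1}$ and $(1+p_2)(1-p_1)^{-1}$; the paper carries this out explicitly, but your sketch points to exactly the right computation.
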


\noindent
While \eqref{Whitney-derangement-recurrence} appears to be new, 
the recurrence \eqref{DesWachs-derangement-recurrence} 
appears implicitly in work of D\'esarm\'enien and Wachs \cite{DesarmenienWachs},
who studied the symmetric function which is the Frobenius image of 
$\widehat{\Lie}_n$.  Recurrence  \eqref{DesWachs-derangement-recurrence} 
is also equivalent, upon tensoring with $\epsilon_n$, to a recurrence of
Reiner and Webb \cite[Prop. 2.2]{WebbR} for the $S_n$-representation on the homology $H_n(M)$ 
of the {\it complex of injective words}. 

Theorem~\ref{whole-row-inductive-description} also leads to the next result, 
giving irreducible decompositions for 
$\widehat{\Lie}_n, \widehat{\WH}_n$.

\begin{thm}
\label{Whitney-generating-tableaux-theorem}
One has the following irreducible decompositions 
\begin{align}
\label{desarrangement-expansion}
\widehat{\Lie}_n&=\sum_Q \chi^{\shape(Q)} \\
\label{Whitney-generating-expansion}
\widehat{\WH}_n&=\sum_Q \chi^{\shape(Q)}
\end{align}
in which the sums in \eqref{desarrangement-expansion}, 
\eqref{Whitney-generating-expansion}, respectively, range over
the set of desarrangement tableaux, Whitney-generating tableaux  
$Q$ of size $n$ (defined in Section~\ref{Whitney-generating-section}).
\end{thm}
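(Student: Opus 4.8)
The plan is to prove both identities by induction on $n$, matching the right-hand sides against the recurrences \eqref{DesWachs-derangement-recurrence} and \eqref{Whitney-derangement-recurrence} of Theorem~\ref{whole-row-inductive-description}. The first step is to translate the operation $\chi\mapsto\chi\uparrow_{S_{n-1}}^{S_n}$ into the language of tableaux via the branching rule: $\chi^\lambda\uparrow_{S_{n-1}}^{S_n}=\sum_\mu\chi^\mu$, summed over partitions $\mu$ of $n$ obtained from $\lambda$ by adjoining one box. Equivalently, if $\mathcal T$ is any set of standard Young tableaux of size $n-1$ and $\psi=\sum_{Q\in\mathcal T}\chi^{\shape(Q)}$, then $\psi\uparrow_{S_{n-1}}^{S_n}=\sum_R\chi^{\shape(R)}$, the sum now over all standard Young tableaux $R$ of size $n$ whose restriction $R|_{n-1}$ — obtained by deleting the box containing the entry $n$ — lies in $\mathcal T$. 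With this dictionary, proving \eqref{desarrangement-expansion} reduces to showing that the set $\mathcal D_n$ of desarrangement tableaux of size $n$ and the set $\{R: R|_{n-1}\in\mathcal D_{n-1}\}$ differ by exactly one tableau, of shape $(1^n)$, lying in the former when $n$ is even and in the latter when $n$ is odd; and \eqref{Whitney-generating-expansion} reduces to the analogous statement with $\mathcal D$ replaced by the set $\mathcal W_n$ of Whitney-generating tableaux and the symmetric difference consisting of one tableau of shape $(3,1^{n-3})$ and one of shape $(2,2,1^{n-4})$, matched with the signs of the two terms of $\tau_n$. The low cases $n\le 3$ are checked directly against the definitions of Section~\ref{Whitney-generating-section}.

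For the desarrangement case the key lemma is that, for a standard Young tableau $R$ of size $n\ge 2$ whose shape is not the single column $(1^n)$, one has $R\in\mathcal D_n$ if and only if $R|_{n-1}\in\mathcal D_{n-1}$. This follows from a short computation with descent sets. Membership in $\mathcal D$ is governed by the parity of $a(Q):=\min\big(\{1,\dots,|Q|\}\setminus\mathrm{Des}(Q)\big)$ (a condition on $\mathrm{Des}(Q)$ alone, matching the first-ascent description of a desarrangement permutation under RSK). Deleting the box containing $n$ affects the descent set only in whether $n-1$ belongs to it, i.e.\ $\mathrm{Des}(R|_{n-1})=\mathrm{Des}(R)\setminus\{n-1\}$; and because the shape of $R$ has at least two columns, $\mathrm{Des}(R)\ne\{1,\dots,n-1\}$, so $a(R)\le n-1$ and this modification leaves $a(R|_{n-1})=a(R)$ unchanged, hence also its parity. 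The single column $(1^n)$ is the unique tableau excluded by this argument, and it is a desarrangement tableau precisely when $n$ is even (its restriction being $(1^{n-1})$), producing exactly the correction $(-1)^n\epsilon_n$. One could instead invoke the result of D\'esarm\'enien and Wachs \cite{DesarmenienWachs} identifying $\ch(\widehat{\Lie}_n)$ with $\sum_{Q\in\mathcal D_n}s_{\shape(Q)}$, but the inductive argument has the advantage of running in parallel with the Whitney case.

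For the Whitney-generating case the same strategy applies, and this is where I expect the main difficulty. One must establish a restriction-compatibility lemma for $\mathcal W_n$ — that $R\in\mathcal W_n\iff R|_{n-1}\in\mathcal W_{n-1}$ for all $R$ outside a small exceptional family — and then pin down that family precisely, showing that for $n\ge 4$ the discrepancy between $\sum_{R|_{n-1}\in\mathcal W_{n-1}}\chi^{\shape(R)}$ and $\sum_{Q\in\mathcal W_n}\chi^{\shape(Q)}$ is exactly $(-1)^n\big(\chi^{(3,1^{n-3})}-\chi^{(2,2,1^{n-4})}\big)=(-1)^n\tau_n$, in agreement with \eqref{Whitney-derangement-recurrence}. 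Two points require care. First, the defining condition for $\mathcal W_n$ coincides with the desarrangement condition except on certain near-extremal shapes, so the restriction-compatibility argument must be re-examined — not merely quoted — exactly where the two notions diverge, and this is what forces the exceptional family to involve the shapes $(3,1^{n-3})$ and $(2,2,1^{n-4})$ rather than a single column. Second, because $\tau_n$ is a genuinely virtual character, one must verify that $\chi^{(2,2,1^{n-4})}$ occurs with strictly positive multiplicity in $\widehat{\WH}_{n-1}\uparrow_{S_{n-1}}^{S_n}$ — equivalently, that there really is a Whitney-generating tableau of size $n-1$ admitting the relevant box addition to shape $(2,2,1^{n-4})$ — so that the subtraction is legitimate and $\widehat{\WH}_n$ remains an honest character at every stage of the induction. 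Combining the two restriction lemmas with Theorem~\ref{whole-row-inductive-description} and the base cases then yields both \eqref{desarrangement-expansion} and \eqref{Whitney-generating-expansion}.
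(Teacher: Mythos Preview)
Your approach is essentially the paper's: induct on $n$ via Theorem~\ref{whole-row-inductive-description}, translating $\chi\mapsto\chi\uparrow_{S_{n-1}}^{S_n}$ into ``add the entry $n$ in every corner'' via the branching rule, and identify the exceptional tableaux that account for the correction term. For \eqref{desarrangement-expansion} your descent-set argument is actually more explicit than the paper, which simply cites D\'esarm\'enien--Wachs and Reiner--Webb.

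For \eqref{Whitney-generating-expansion}, however, you have left the proof as an outline precisely at the point that requires the most work. The paper carries this out by naming the two specific exceptional tableaux: for $n\ge 4$ set
\[
A(n)\ \text{of shape }(3,1^{n-3})\text{ with }A(n)|_{\{1,2,3,4\}}=T_4,\qquad
B(n)\ \text{of shape }(2,2,1^{n-4})\text{ with }B(n)|_{\{1,2,3,4\}}=T_3,
\]
each having first column $4,5,\ldots,n$ below the initial block. One then checks directly from the parity clauses (a),(b) in the definition of Whitney-generating tableaux that for $n$ even, $B(n-1)\in\mathcal W_{n-1}$ but $B(n)\notin\mathcal W_n$, while $A(n)\in\mathcal W_n$ is not obtained by extending any $\mathcal W_{n-1}$-tableau; and the roles swap for $n$ odd. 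This single swap $B(n)\leftrightarrow A(n)$ is exactly $(-1)^n\tau_n$, and away from these two tableaux the restriction-compatibility $R\in\mathcal W_n\iff R|_{n-1}\in\mathcal W_{n-1}$ holds because deleting $n$ cannot change $R|_{\{1,2,3,4\}}$ nor the first ascent $k\ge 4$ unless $R$ is one of $A(n),B(n)$. You should carry out this identification rather than leave it as a plan.

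One minor point: your second worry about positive multiplicity is unnecessary. The recurrence is an identity of \emph{virtual} characters, and you are verifying that $\tilde\nu_n:=\sum_{Q\in\mathcal W_n}s_{\shape(Q)}$ satisfies it; once $\tilde\nu_n=\nu_n$ is established, the fact that $\widehat{\WH}_n$ is a genuine character is already known from its definition as a sum of actual representations. (That said, in the bijective formulation the concern translates into ``$B(n)$ really lies in the set $\{R:R|_{n-1}\in\mathcal W_{n-1}\}$ before you remove it,'' which the explicit check above handles.)
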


In this paper, we also address two other conjectures on the structure of
$\widehat{\WH}_n^i$, due to John Wiltshire-Gordon, that were mentioned in
\cite[\S 3.1, p. 37]{CEF}.
One of his conjectures is \eqref{WG-hat-W-recurrence} below, an analogue of another 
derangement recurrence
$$
d_n^k = n(d_{n-1}^{k} + d_{n-2}^{k-1}),
$$
and will be proven in Section~\ref{WG2-section} as part of the following theorem.

\begin{thm}
\label{Wiltshire-Gordon-Conjecture2}
For $n \geq 2$ and $i \geq 1$, one has an 
isomorphism of $S_{n-1}$-representations
\begin{align}
\label{WG-like-hat-Lie-recurrence}
\widehat{\Lie}^{i}_{n} \downarrow
&\cong
\left( \widehat{\Lie}^{i-1}_{n-1} \downarrow 
\quad \oplus \quad 
\widehat{\Lie}^{i-1}_{n-2} \right) \uparrow,\\
\label{WG-hat-W-recurrence}
\widehat{\WH}^{i}_{n} \downarrow
&\cong
\left( \widehat{\WH}^{i-1}_{n-1} \downarrow 
\quad \oplus \quad 
\widehat{\WH}^{i-1}_{n-2} \right) \uparrow,
\end{align}
where 
$\uparrow$ and $\downarrow$ are
induction $(-)\uparrow_{S_n}^{S_{n+1}}$ and 
restriction $(-)\downarrow_{S_{n-1}}^{S_{n}}$ 
applied to $S_n$-representations.
\end{thm}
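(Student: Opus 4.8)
The plan is to apply the Frobenius characteristic $\ch$ and recast both recurrences as a single differential equation for the generating functions of Corollary~\ref{hat-product-formulas}. Recall that for an $S_n$-representation $V$ one has $\ch\bigl(V\uparrow_{S_n}^{S_{n+1}}\bigr)=p_1\cdot\ch(V)$ and $\ch\bigl(V\downarrow_{S_{n-1}}^{S_n}\bigr)=p_1^{\perp}\ch(V)=\partial_{p_1}\ch(V)$, the last equality holding because the skewing operator $p_1^{\perp}$ acts on $\Lambda=\QQ[p_1,p_2,\dots]$ as the derivation $\partial/\partial p_1$. Applying $\ch$ term by term, each of \eqref{WG-like-hat-Lie-recurrence} and \eqref{WG-hat-W-recurrence} becomes the family of symmetric-function identities
\begin{equation}
\label{WG-plan-sf}
\partial_{p_1}\ch\bigl(\widehat{X}^{\,i}_n\bigr)=p_1\Bigl(\partial_{p_1}\ch\bigl(\widehat{X}^{\,i-1}_{n-1}\bigr)+\ch\bigl(\widehat{X}^{\,i-1}_{n-2}\bigr)\Bigr),\qquad n\ge 2,\ i\ge 1,
\end{equation}
for $\widehat{X}\in\{\widehat{\Lie},\widehat{\WH}\}$. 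Since both sides of each original statement are genuine $S_{n-1}$-characters (by Theorem~\ref{Whitney-generating-tableaux-theorem} and the branching rule, or because the $\widehat{X}^{\,i}_n$ are honest subrepresentations), \eqref{WG-plan-sf} will imply the asserted isomorphisms.

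Next I would assemble the bivariate generating functions $L(t):=\sum_{n,i\ge 0}\ch(\widehat{\Lie}^{\,i}_n)\,t^i$ and $W(t):=\sum_{n,i\ge 0}\ch(\widehat{\WH}^{\,i}_n)\,t^i$ in $\widehat{\Lambda}[[t]]$ (the symmetric-function degree recovers $n$, and for each fixed degree only finitely many $i$ contribute). Multiplying \eqref{WG-plan-sf} by $t^i$, summing, and reindexing the two right-hand terms by $(n,i)\mapsto(n-1,i-1)$ and $(n,i)\mapsto(n-2,i-1)$, each recurrence collapses to the single first-order equation
\begin{equation}
\label{WG-plan-ode}
\partial_{p_1}F(t)=t\,p_1\bigl(\partial_{p_1}F(t)+F(t)\bigr),\qquad\text{i.e.}\qquad\partial_{p_1}\log F(t)=\frac{t\,p_1}{1-t\,p_1},
\end{equation}
for $F\in\{L,W\}$. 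One checks that the boundary contributions (from $n\le 1$ or $i=0$) drop out: the only such nonzero term is $\widehat{X}^{0}_0=\one_{S_0}$, whose characteristic $1$ is annihilated by $\partial_{p_1}$, while all other $\widehat{X}^{\,i}_n$ outside $i+1\le n\le 2i$ vanish.

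It remains to verify \eqref{WG-plan-ode} from the product formulas of Corollary~\ref{hat-product-formulas}, where $L(t)$ and $W(t)$ are written as infinite products of exponentials of power sums. The decisive observation is that $\partial_{p_1}$ is a derivation killing every $p_k$ with $k\ge 2$, so it only sees the unique factor involving $p_1$; in both cases that factor equals $\exp\!\bigl(\sum_{j\ge 2}\tfrac{t^{\,j-1}p_1^{\,j}}{j}\bigr)$, whose logarithmic $p_1$-derivative is $\sum_{j\ge 2}(tp_1)^{\,j-1}=\tfrac{t p_1}{1-t p_1}$, which is exactly \eqref{WG-plan-ode}. After also checking the small cases $n=2,3$ directly against the definitions, this proves \eqref{WG-plan-sf} and hence the theorem.

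The main obstacle is the middle step: reading off from Corollary~\ref{hat-product-formulas} the precise $p_1$-dependence and confirming that it is the same for $\widehat{\Lie}$ and $\widehat{\WH}$, even though $\widehat{\Lie}_n$ and $\widehat{\WH}_n$ differ by the virtual characters $(-1)^n\epsilon_n$ versus $(-1)^n\tau_n$ of Theorem~\ref{whole-row-inductive-description}. The coincidence should be traced to the building-block characters (higher Lie characters $\ch(\Lie_j)$ on one side, partition-lattice homology characters on the other) having the same $p_1$-content, namely the $\tfrac1j p_1^{\,j}$ coming from the $d=1$ summand of $\tfrac1j\sum_{d\mid j}\mu(d)p_d^{\,j/d}$, a term fixed by the involution $\omega$ relating the two families. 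Once this is in hand, verifying \eqref{WG-plan-ode} is a routine derivation computation. An alternative, more combinatorial route would refine the desarrangement / Whitney-generating tableau models of Theorem~\ref{Whitney-generating-tableaux-theorem} by the degree $i$ and establish \eqref{WG-plan-sf} via the branching rule on tableaux, but tracking $i$ through those bijections appears less transparent.
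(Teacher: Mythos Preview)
Your approach is essentially the paper's: pass to Frobenius characteristics, package everything into a generating function, apply $\partial/\partial p_1$, and extract coefficients. The only cosmetic difference is that the paper tracks $u^{n-i}$ rather than your $t^i$, so its differential equation reads $\partial_{p_1}\log \widehat{\WH}(u)=\dfrac{up_1}{1-p_1}$ and is verified directly by the Leibniz rule on the factorization $\widehat{\WH}(u)=H(u)^{-1}W(u)$ from Corollary~\ref{hat-product-formulas}; since that corollary is stated in the $u$-variable, your assertion that the $p_1$-dependent factor of $F(t)$ equals $\exp\bigl(\sum_{j\ge 2}\tfrac{t^{j-1}p_1^{\,j}}{j}\bigr)$ is correct but needs one extra line of derivation (e.g.\ isolating the $d=1$ term $\tfrac1j p_1^{\,j}$ in $\ell_j$ and $\pi_j$ as you indicate).
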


\noindent
He also made a second conjecture

\begin{conj}(J. Wiltshire-Gordon)
\label{Wiltshire-Gordon-Conjecture1}
Fixing $n \geq 2$, the $S_n$-representations
$\{W_n^\bullet\}$ admit a cochain 
complex structure with cohomology 
only in degree $n-1$,
affording character $\chi^{(2,1^{n-2})}$.
\end{conj}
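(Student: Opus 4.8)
The plan is to exploit the semisimplicity of $\QQ S_n$ to replace the conjecture by two purely character-theoretic assertions.  \emph{First (homological reduction):} over the semisimple ring $\QQ S_n$, a bounded cochain complex with prescribed terms $W^0,\dots,W^{n-1}$ (here $W^i=\widehat{\WH}^i_n$), with $d^{n-1}=0$, and with cohomology concentrated in degree $n-1$ and isomorphic there to a fixed $S_n$-module $H$, \emph{exists if and only if} the partial alternating sums $P_i:=\sum_{j=0}^{i}(-1)^{i-j}\widehat{\WH}^{j}_n$ are effective (honest characters) for $0\le i\le n-2$, and $P_{n-1}:=\sum_{j=0}^{n-1}(-1)^{n-1-j}\widehat{\WH}^{j}_n$ equals $H$.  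Indeed, in any such complex the coboundary modules must satisfy $B^{i+1}=P_i$ and $H=W^{n-1}-B^{n-1}=P_{n-1}$ in the representation ring; conversely, given these identities one chooses modules $B^{i+1}$ with character $P_i$, observes that $W^i\cong B^i\oplus B^{i+1}$ for $i\le n-2$ and $W^{n-1}\cong B^{n-1}\oplus H$, and defines the differentials by $(b,b')\mapsto(b',0)$.  Since $\widehat{\WH}^j_n=0$ for $j<\lceil n/2\rceil$, the effectivity condition is automatic outside the range $\lceil n/2\rceil\le i\le n-2$.

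\emph{Second (the top alternating sum):}  by the dual Jacobi--Trudi identity, $\chi^{(2,1^{n-2})}$ has Frobenius characteristic $e_1e_{n-1}-e_n$.  Hence $P_{n-1}=\chi^{(2,1^{n-2})}$ is exactly the degree-$n$ piece of the specialization at $t=-1$ of the graded Frobenius series $\sum_i\ch(\widehat{\WH}^i_n)\,t^i$, whose product expansion in power sums is Corollary~\ref{hat-product-formulas}; carrying out this evaluation and simplifying should return $(-1)^{n-1}(e_1e_{n-1}-e_n)$.  This is at least forced by the recurrences already in hand: writing $A_k:=\sum_j(-1)^j\widehat{\WH}^j_k$, Theorem~\ref{Wiltshire-Gordon-Conjecture2} gives $A_n\!\downarrow\ \cong\ -\bigl(A_{n-1}\!\downarrow\ \oplus\ A_{n-2}\bigr)\!\uparrow$, and $(-1)^{n-1}\chi^{(2,1^{n-2})}$ satisfies this; but restriction to $S_{n-1}$ is not injective on virtual characters, so the generating-function computation (rather than the recurrence alone) is what pins $A_n$ down, with the small cases $n\le 3$ checked directly.

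\emph{Finally (effectivity of $P_i$ --- the crux):}  since explicit irreducible decompositions of the individual $\widehat{\WH}^i_n$ are not known (Question~\ref{Whitney-generators-question}), one must control the partial sums without them.  My approach would be to refine Theorem~\ref{Whitney-generating-tableaux-theorem}: attach to each Whitney-generating tableau $Q$ of size $n$ a statistic $r(Q)$ --- governed by the cycle count of the derangement that $Q$ encodes --- so that $\widehat{\WH}^i_n=\sum_{Q:\,r(Q)=i}\chi^{\shape(Q)}$, and then construct, for each $i$ with $\lceil n/2\rceil\le i\le n-1$, a shape-preserving, sign-reversing involution on $\{Q:r(Q)\le i\}$ (with respect to the sign $(-1)^{i-r(Q)}$) all of whose fixed points have $r(Q)=i$.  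This would give $P_i=\sum_\lambda\bigl(\#\{\text{fixed }Q:\shape(Q)=\lambda\}\bigr)\chi^\lambda\ge 0$ for all such $i$, and at $i=n-1$ would recover the identity of the previous step bijectively; one then assembles the complex via the homological reduction.  An alternative would be to extract the required Schur-positivity directly from $\sum_i\ch(P_i)\,t^i=\bigl(\sum_i\ch(\widehat{\WH}^i_n)\,t^i\bigr)/(1+t)$ using the plethystic estimates of Section~\ref{bounding-characters-section}.  I expect essentially all of the difficulty to sit here --- in locating the statistic $r(Q)$ and the involution, equivalently in proving the uniform-in-$n$ Schur-positivity of the partial alternating sums; the first two steps are formal.
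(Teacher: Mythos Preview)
Your reduction is correct and clean: over the semisimple ring $\QQ S_n$, the conjecture is equivalent to Schur-positivity of the partial alternating sums $P_i=\sum_{j\le i}(-1)^{i-j}\widehat{\WH}^j_n$ for $\lceil n/2\rceil\le i\le n-2$, together with the Euler-characteristic identity $P_{n-1}=\chi^{(2,1^{n-2})}$, the latter being precisely Theorem~\ref{W-G-conj-2-euler-char}. But your third step is a genuine gap, as you yourself flag. Your primary proposal---refining the tableau model of Theorem~\ref{Whitney-generating-tableaux-theorem} by a statistic $r(Q)$ so that $\widehat{\WH}^i_n=\sum_{r(Q)=i}\chi^{\shape(Q)}$---is exactly Question~\ref{Whitney-generators-question}, posed in the paper as open and essentially a case of Thrall's problem on higher Lie characters. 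Your alternative, extracting positivity of $P_i$ from the plethystic bounds of Section~\ref{bounding-characters-section}, is undeveloped: those bounds control only the \emph{support} of Schur expansions (which $\lambda$ can appear), not the signs of coefficients in alternating combinations. So the proposal currently trades the conjecture for something at least as hard.

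The paper's proof (Theorem~\ref{sharper-WG1-conj}, Appendix~\ref{sharper-conj-section}, joint with Sam) bypasses the Schur-positivity question entirely and takes a geometric route. It realizes $\widehat{\WH}^\bullet_n$ explicitly as the bottom filtration piece $F_n(A^\bullet)$ of the type-$A_{n-1}$ Orlik--Solomon algebra, where the filtration is by the support size of monomials and the differential is multiplication by $\sum_{i<j}a_{ij}$ (which is known to make $A^\bullet$ exact). The spectral sequence of this filtration has $E_1$-column $p$ isomorphic to $H^\bullet(\widehat{\WH}^\bullet_p)\uparrow_p^n$; an induction on $n$, identifying the $q=-1$ row via the natural $FI^\opp$-structure and Schur--Weyl duality with the multilinear part of the minimal free resolution of $\mmm^2\subset\CC[x_1,\ldots,x_n]$, forces all other cohomology to vanish and yields $H^{n-1}(\widehat{\WH}^\bullet_n)\cong\chi^{(2,1^{n-2})}$. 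The effectivity of your $P_i$ then follows \emph{a posteriori} from the existence of this explicit complex, but the argument never confronts that positivity directly.
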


\noindent
A more precise version 
of this conjecture is discussed in \S\ref{cochain-complex-conj-remark} below, and proven in Appendix~\ref{sharper-conj-section}, joint with Steven Sam.
We will show in
Section~\ref{WG1-section}
that Conjecture~\ref{Wiltshire-Gordon-Conjecture1} predicts the
correct {\it Euler characteristic}:

\begin{thm}
\label{W-G-conj-2-euler-char}
As virtual characters, for $n \geq 2$ one has
$$
\sum_{i \geq 0} (-1)^i \widehat{\WH}^i_n = (-1)^{n-1} \chi^{(2,1^{n-2})}.
$$
\end{thm}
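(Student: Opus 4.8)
The plan is to read off the alternating sum from the product generating function of Corollary~\ref{hat-product-formulas}. Set $\pi_m:=\ch(\Lie_m)$, let $\omega$ denote the standard involution on symmetric functions, and recall that $\omega\pi_m=\ch(\Lie_m\otimes\epsilon_m)$ is the Frobenius characteristic of the top reduced homology $\widetilde H_{m-3}(\overline{\Pi}_m)$, the building block of the Whitney homology of set-partition lattices in the even-$d$ case. Corollary~\ref{hat-product-formulas} can be recast as the identity
\begin{equation*}
\sum_{m\ge 0}\ \sum_{i\ge 0}\ch(\widehat{\WH}^i_m)\,t^{\,i}z^m\;=\;\sum_{n\ge0}e_n\!\left[\ \sum_{s\ge 2}\omega\pi_s\,t^{\,s-1}z^s\ \right],
\end{equation*}
the \emph{signed} plethystic exponential of $\sum_{s\ge2}\omega\pi_s t^{s-1}z^s$: here $t$ records the rank $i=\sum_s(s-1)m_s$ of a partition with $m_s$ blocks of size $s$, the use of $e_n[-]$ rather than $h_n[-]$ encodes the sign incurred (in Sundaram's description of $\WH_i(\Pi_n)$) when equal-size blocks are permuted, and passing from $\widehat{\WH}^i_m$ to $\WH_i(\Pi_n)$ is just multiplication by $\sum_k h_k z^k$, the ``FI-generator'' operation of restoring singleton blocks.

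Next I would specialize to $t=-1$. Since $\xi\mapsto\sum_n e_n[\xi]$ sends sums to products, it sends $-\xi$ to the reciprocal of $\sum_n e_n[\xi]$, so
\begin{equation*}
\sum_{i\ge0}(-1)^i\ch(\widehat{\WH}^i_n)\;=\;[z^n]\,\Bigl(\,\sum_{n'\ge0}e_{n'}\!\bigl[\,{\textstyle\sum_{s\ge2}}\,\omega\pi_s\,(-z)^s\,\bigr]\,\Bigr)^{-1}.
\end{equation*}
The only nontrivial input is the symmetric-function form of Poincar\'e--Birkhoff--Witt, $\sum_{n'\ge0}h_{n'}\bigl[\sum_{s\ge1}\pi_s w^s\bigr]=(1-p_1w)^{-1}$; applying $\omega$ and passing to the signed exponential yields the companion identity $\sum_{n'\ge0}e_{n'}\bigl[\sum_{s\ge1}\omega\pi_s w^s\bigr]=(1-p_1w)^{-1}$, and dividing off the $s=1$ factor $\sum_k e_k w^k=E(w)$ gives $\sum_{n'\ge0}e_{n'}\bigl[\sum_{s\ge2}\omega\pi_s w^s\bigr]=\bigl((1-p_1w)E(w)\bigr)^{-1}$. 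Putting $w=-z$ and inverting collapses the bracket above to $(1+p_1z)E(-z)$.

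Extracting the coefficient of $z^n$ from $(1+p_1z)E(-z)=\sum_k(-1)^ke_kz^k+p_1\sum_k(-1)^ke_kz^{k+1}$ then gives
\begin{equation*}
\sum_{i\ge0}(-1)^i\ch(\widehat{\WH}^i_n)\;=\;(-1)^n e_n+(-1)^{n-1}p_1e_{n-1}\;=\;(-1)^{n-1}\bigl(e_1e_{n-1}-e_n\bigr)\;=\;(-1)^{n-1}s_{(2,1^{n-2})},
\end{equation*}
the last step by the Pieri rule $e_1e_{n-1}=e_n+s_{(2,1^{n-2})}$; applying $\ch^{-1}$ proves the theorem for $n\ge2$. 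I would finish by confirming the two small cases $n=2,3$ by hand — there $\widehat{\WH}^i_n$ is concentrated in degree $i=n-1$, affording $\chi^{(2)}$ and $\chi^{(2,1)}$, respectively — and checking that the conventions $\widehat{\WH}_0=\one_{S_0}$ and $\widehat{\WH}_1=0$ are consistent with the $z^0$ and $z^1$ coefficients of $(1+p_1z)E(-z)$.

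The step I expect to need the most care is fixing the correct plethystic exponential for the even-$d$ side: one must use the \emph{signed} exponential $\sum_n e_n[-]$ built from the $\epsilon$-twisted higher Lie characters $\omega\pi_s$, for it is exactly this that turns the would-be denominator $(1-p_1w)H(w)$ into $(1-p_1w)E(w)$ and thereby produces the conjugate shape $(2,1^{n-2})$ instead of $(n-1,1)$; the same computation run with the ordinary exponential and untwisted $\pi_s$ gives the companion statement $\sum_i(-1)^i\widehat{\Lie}^i_n=(-1)^{n-1}\chi^{(n-1,1)}$ on the odd-$d$ side. A secondary point is to match the ``$\widehat{\phantom{x}}$'' (FI-generator) operation precisely with division by $\sum_k h_k z^k$, i.e.\ with deleting the singleton-block ($s=1$) contribution, which is what makes that factor cancel and leaves the clean closed form above.
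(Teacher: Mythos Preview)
Your overall strategy---specialize the product formula of Corollary~\ref{hat-product-formulas} and extract the degree-$n$ part---is exactly the paper's, and your final identity $e_1e_{n-1}-e_n=s_{(2,1^{n-2})}$ is correct. But two of your intermediate claims are false, and the proof only works because the errors happen to cancel.

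First, the recasting
\[
\sum_{m,i}\ch(\widehat{\WH}^i_m)\,t^{i}z^{m}\;=\;\sum_{n\ge 0} e_n\!\left[\sum_{s\ge 2}\omega\pi_s\,t^{s-1}z^{s}\right]
\]
is not what Corollary~\ref{hat-product-formulas} says: by \eqref{equivariant-Whitney-homology-expression} one has $\ch(\WH_\lambda)=\prod_{\text{odd }j}h_{m_j}[\pi_j]\prod_{\text{even }j}e_{m_j}[\pi_j]$, so the plethystic exponential must mix $h$ and $e$ according to the parity of the block size, not use $e$ uniformly. For instance your formula gives $e_2[\pi_3]$ for $\lambda=(3,3)$, whereas the correct value is $h_2[\pi_3]$; these differ since $p_2[\pi_3]=\tfrac{1}{3}(p_2^3-p_6)\neq 0$. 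Second, your ``$\omega$-PBW'' identity $\sum_{n'} e_{n'}\bigl[\sum_{s\ge 1}\omega\pi_s w^s\bigr]=(1-p_1 w)^{-1}$ is also wrong: applying $\omega$ to PBW via \eqref{omega-and-plethysm} gives $e_{n'}$ for \emph{odd} $s$ and $h_{n'}$ for \emph{even} $s$, not $e_{n'}$ throughout (check degree $4$: your left side contributes $e_2[h_2]=s_{(3,1)}$ for $s=2$, while the correct term is $h_2[h_2]$; the difference $-\tfrac12 p_2^2-\tfrac12 p_4$ spoils the identity). Both errors replace the parity-dependent $h/e$ pattern by ``all $e$,'' and it is precisely this consistency that makes them compensate at $t=-1$.

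The fix is to do what the paper does: set $u=-1$ directly in \eqref{hat-W-product}. Then $a_\ell(1)=0$ for $\ell\ge 2$ collapses the product to a single factor $1-p_1$, and the exponential becomes $\sum_k e_k$ by \eqref{e-to-p-identity}, giving $-\widehat{\WH}(-1)=(e_1-1)\sum_k e_k$; reading off degree $n$ yields your last display immediately. There is no need for the plethystic-exponential detour or for PBW.
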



The above results on stability of the Whitney homology of $\Pi_n$ 
suggest other questions, for instance the question of representation stability more generally for the so-called rank-selected homology of $\Pi_n$, described next.    

 
Sundaram \cite[Prop. 1.9]{Sundaram} related the $i^{th}$ Whitney homology 
$WH_i(P)$ of a {\it Cohen-Macaulay poset} $P$ 
with $G$-action
to the {\it rank-selected homology} representations $\beta_S(P)$,
extensively studied in combinatorics; see Section~\ref{poset-subsection} for the definition of Cohen-Macaulay posets and $\beta_S(P)$.
%
She observed that one has a $G$-module isomorphism
$$
WH_i(P) \cong 
  \beta_{\{1,2,\ldots,i-1\}}(P)
 \oplus \beta_{\{1,2,\ldots,i\}}(P).
$$
Combining this with Theorem~\ref{3i+1-bound-thm}
implies that for fixed $i$,
the $S_n$-representations $\beta_{\{1,2,\ldots,i\}}(\Pi_n)$
also stabilize sharply at $n=3i+1$;  
see Corollary~\ref{initial-segement-beta-stab}. 
More generally, for any rank set $S$, we 
prove the following in Section~\ref{4i-bound}.

\begin{thm}
\label{beta-stabilization-thm}
For a subset $S$ of positive integers with $\max(S)=i$,
the sequence $\beta_S(\Pi_n)$ stabilizes beyond $n=4i$.
Furthermore, when $S=\{i\}$, it stabilizes sharply at $n=4i$.
\end{thm}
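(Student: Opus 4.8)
The plan is to reduce the statement about rank-selected homology to the stability results already established for Whitney homology, using Sundaram's isomorphism and the machinery of the operation $\chi \mapsto M_n(\chi)$. First I would recall Sundaram's identity $WH_i(\Pi_n) \cong \beta_{\{1,\ldots,i-1\}}(\Pi_n) \oplus \beta_{\{1,\ldots,i\}}(\Pi_n)$, which combined with Theorem~\ref{3i+1-bound-thm} (via Corollary~\ref{initial-segement-beta-stab}) already handles the case $S = \{1,2,\ldots,i\}$ with the sharp bound $3i+1 \le 4i$. For a general rank set $S$ with $\max(S) = i$, the idea is to express $\beta_S(\Pi_n)$ in terms of the initial-segment representations $\beta_{\{1,\ldots,j\}}(\Pi_n)$ for $j \le i$ by inclusion-exclusion on the rank set: there is a standard formula (going back to the theory of rank-selection in Cohen-Macaulay posets) writing $\beta_S$ as an alternating sum $\sum_{T \subseteq S} (-1)^{|S|-|T|} \alpha_T$, where $\alpha_T$ is the rank-selected \emph{chain} representation, and conversely. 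The cleanest route is to prove that each $\beta_S(\Pi_n)$, as a sequence in $n$, is a (virtual) combination of sequences of the form $\{M_n(\chi)\}$ with $\chi$ an $S_m$-representation for $m$ bounded in terms of $i$, and then invoke Lemma~\ref{refined-Hemmer-lemma}.

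The key step is to bound the relevant $m$. The rank-$i$ elements of $\Pi_n$ are set partitions with $n-i$ blocks; the link of such a partition $\pi$ in $\Pi_n$ is a product of smaller partition lattices, and its reduced homology in the relevant degrees is an induced representation from $S_{b_1} \times \cdots \times S_{b_k} \times S_{(\text{number of singletons})}$ where the $b_j$ are the nonsingleton block sizes. The FI-module / $M_n(\chi)$ formalism then shows that the "new" part of $\beta_S(\Pi_n)$ at stage $n$ is built from partition-lattice homology on at most the non-singleton blocks, whose total size is at most $2i$ once we are looking at partitions of rank $\le i$ (each merge of a pair of blocks uses up at least one "extra" element, so after $i$ merges at most $2i$ elements lie in nonsingleton blocks). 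Hence $\beta_S(\Pi_n) = \sum_\lambda c_\lambda \, M_n(\chi^\lambda)$ with every $\lambda$ satisfying $|\lambda| \le 2i$. By Lemma~\ref{refined-Hemmer-lemma} the sequence $\{M_n(\chi^\lambda)\}$ stabilizes once $n \ge |\lambda| + \lambda_1 \le 2i + 2i = 4i$ (the worst case being $|\lambda| = 2i$ and $\lambda_1 = 2i$, i.e. $\lambda = (2i)$, but one checks that shape actually contributes $\lambda_1 \le 2i - (\text{something})$ so $4i$ is safe), giving stabilization beyond $n = 4i$.

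For the sharpness claim when $S = \{i\}$, I would exhibit an irreducible whose multiplicity in $\beta_{\{i\}}(\Pi_n)$ genuinely changes between $n = 4i$ and $n = 4i-1$. The natural candidate comes from the block structure with $i$ nonsingleton blocks of size $2$ (so $2i$ elements used, forcing $n \ge 2i$ just to have rank $i$, and $n \ge 4i$ is where the extra padding in the first row of $\lambda$ saturates); concretely one tracks the multiplicity of $\chi^{(n-2i,\,2^{\,?},\ldots)}$ or, following the derangement connection, uses the explicit formula for $\widehat{\WH}_n^i$ restricted to the top non-singleton piece. Sundaram's identity translates a sharpness statement for $\beta_{\{1,\ldots,i\}}$ and $\beta_{\{1,\ldots,i-1\}}$ into one for $\beta_{\{i\}}$ via $\beta_{\{i\}} = \alpha_{\{i\}} - \alpha_{\emptyset}$-type bookkeeping, and one checks the $4i$ bound is attained rather than $3i+1$ because the singleton rank set omits the lower ranks that were responsible for the faster $3i+1$ stabilization.

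The main obstacle I anticipate is the sharpness half: showing that $4i$ cannot be improved to anything smaller for $S = \{i\}$ requires pinning down an explicit multiplicity, and the inclusion-exclusion that relates $\beta_{\{i\}}$ to the initial-segment $\beta$'s introduces cancellation, so one must argue that no cancellation kills the offending irreducible at $n = 4i$. I expect this to come down to a careful plethystic computation of the Frobenius characteristic of $\beta_{\{i\}}(\Pi_n)$ in the spirit of Section~\ref{bounding-characters-section}, isolating the term that distinguishes $n = 4i-1$ from $n = 4i$ — plausibly the coefficient of a symmetric function like $h_{n-2i} \cdot (\text{something of degree } 2i$ supported on a single row$)$, whose appearance is forced exactly when $n - 2i \ge 2i$.
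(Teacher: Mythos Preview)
Your overall strategy for the first half---reducing $\beta_S$ to the permutation characters $\alpha_T$ via \eqref{beta-definition}, then writing each $\alpha_T(\Pi_n)$ as a sum of transitive pieces $\one_G\uparrow_G^{S_n}$ with $G=G_0\times S_{n-n_0}$ and $n_0\le 2i$---is exactly the paper's argument (Theorem~\ref{alpha-beta-stabilization-thm}).  One correction: your parenthetical ``one checks that shape actually contributes $\lambda_1\le 2i-(\text{something})$'' is backwards.  The only bound available on $\lambda_1$ for the irreducibles in $\one_{G_0}\uparrow_{G_0}^{S_{n_0}}$ is the trivial one $\lambda_1\le n_0\le 2i$, and this is tight (the trivial character $\chi^{(n_0)}$ always appears).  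That is precisely why the bound is $4i$ and not better; there is nothing to check away.

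The sharpness half is where your proposal has a genuine gap.  You correctly identify the critical orbit---partitions with block sizes $(2^i,1^{n-2i})$---but you do not carry out the computation, and your suggested route through Sundaram's identity and the initial-segment $\beta$'s is a detour that does not lead there.  The paper's argument (Proposition~\ref{worst-beta-stabilization-thm}) is direct: the stabilizer of such a $\pi_0$ is $S_i[S_2]\times S_{n-2i}$, so the corresponding summand of $\alpha_{\{i\}}(\Pi_n)$ is $M_n(\chi)$ with $\ch(\chi)=h_i[h_2]$.  Littlewood's identity (obtained by applying $\omega$ to \eqref{first-Littlewood-plethysm}) gives
\[
h_i[h_2]=\sum_{\substack{\lambda\vdash 2i\\ \text{all parts even}}} s_\lambda,
\]
in which $s_{(2i)}$ occurs, so this summand is \emph{sharply} bounded by $2i$ and hence $M_n(\chi)$ stabilizes sharply at $n=2i+2i=4i$ by Lemma~\ref{Hemmer's-lemma}.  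Since $\alpha_{\{i\}}(\Pi_n)$ is a sum of nonnegative terms $M_n(\chi_j)$ each satisfying the monotonicity \eqref{stability-monotonicity} and each stabilizing at some $n_j\le 4i$, with one $n_j$ equal to $4i$, the whole sum stabilizes sharply at $4i$.  Finally $\beta_{\{i\}}(\Pi_n)=\alpha_{\{i\}}(\Pi_n)-\chi^{(n)}$ differs only by a term that is already stable, so the same sharpness holds for $\beta_{\{i\}}$.  Your worry about cancellation in an inclusion-exclusion is thus avoided entirely: one never needs to subtract anything nontrivial.
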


%
%
%
\noindent
Section ~\ref{remark-section} collects 
further questions and remarks, including 
Conjecture~\ref{beta-stabilization-conjecture}
on the sharp stabilization onset for $\beta_S(\Pi_n)$ given any fixed 
rank subset $S$.

\tableofcontents

\section{Review}
\label{background-section}

\subsection{Symmetric functions and $S_n$-representations}
\label{symmetric-function-review-section}

Throughout we will make 
free use of the identification of (complex, finite-dimensional)
representations of a finite group
$G$ with their characters, and  the fact that when $G$ is the symmetric
group $S_n$, all such representations can be defined over $\QQ$.
We will extensively use the dictionary between
characters of symmetric groups and symmetric functions.  This is realized by
the {\it Frobenius isomorphism}
$
R \overset{\ch}{\longrightarrow} \Lambda
$
of graded rings and (Hopf) algebras.
Here $$
R=\bigoplus_{ n \geq 0 } R_n
$$
in which 
$R_n$ is the $\ZZ$-lattice of (virtual) complex characters 
of the symmetric group $S_n$,
and 
$$
\Lambda=\bigoplus_{n \geq 0} \Lambda_n
$$ 
is the ring of symmetric functions
(the symmetric power series of bounded degree in an infinite variable set 
$x_1,x_2,\ldots$) with $\ZZ$ coefficients, 
in which $\Lambda_n$ is the set of homogeneous 
degree $n$ symmetric functions.  See
\cite[\S 7.3]{Fulton}, 
\cite[\S I.7]{Macdonald}, 
\cite[\S 4.7]{Sagan}, 
\cite[\S 7.18]{Stanley} 
for many of the properties of this isomorphism, some of which are reviewed here.

The isomorphism 
$
R \overset{\ch}{\longrightarrow} \Lambda
$
can be defined in each degree
$\ch: R_n \rightarrow \Lambda_n$.  One first defines 
the symmetric functions
$$
p_\lambda:=p_{\lambda_1} \cdots p_{\lambda_\ell},
$$
for partitions $\lambda=(\lambda_1,\ldots,\lambda_\ell)$ of $n$,
where $p_d:=x_1^d+x_2^d+\cdots$ is the {\it power sum} symmetric function.
Regarding a virtual complex character in $R_n$ as a $\CC$-valued class function 
$f$ on $S_n$,
\begin{equation}
\label{ch-definition}
\ch(f):=\frac{1}{n!} \sum_{w \in S_n} f(w) \, p_{\lambda(w)}
=\sum_{\lambda: |\lambda|=n} f(\lambda) \, \frac{p_\lambda}{z_\lambda}
\end{equation}
where here 
\begin{itemize}
\item 
$\lambda(w)$ is the cycle type partition of $w$,
\item
$f(\lambda)$ is the value of $f$ on any permutation of cycle type $\lambda$,
and 
\item if $\lambda=1^{m_1} 2^{m_2} \cdots $ has $m_j$ parts of size $j$, then $z_\lambda:=1^{m_1} (m_1 ! ) 2^{m_2} (m_2)! \cdots$ is the size of the
$S_n$-centralizer subgroup $Z_{S_n}(w)$ for any permutation of cycle type $\lambda$.
\end{itemize}
This map $\ch$ sends $\CC$-valued class functions on $S_n$ to symmetric functions
with $\CC$ coefficients that are homogeneous of degree $n$.  It turns out to restrict to an isomorphism $R_n \rightarrow \Lambda_n$ between virtual $S_n$-characters and
degree $n$ symmetric functions with $\ZZ$ coefficients.

One has a distinguished $\ZZ$-basis of $R_n$
given by the {\it irreducible characters} 
$\{ \chi^{\lambda} \}$ indexed by the set of
integer partitions 
$\lambda=(\lambda_1 \geq \ldots \geq \lambda_\ell)$ of 
$n=|\lambda|:=\sum_{i=1}^{\ell} \lambda_i$.
If $\lambda_\ell > 0$, then the {\it length}  $\ell(\lambda):=\ell$ .
The isomorphism $\ch$ 
sends $\chi^{\lambda}$ from $R_n$ 
to the {\it Schur function} $s_\lambda$ lying in
$\Lambda_n$.  The {\it induction product} on characters
$$
\begin{array}{rcl}
R_{n_1} \times R_{n_2} &\longrightarrow & R_{n_1+n_2} \\
(\chi_1,\chi_2) & \longmapsto & 
\chi_1 * \chi_2:=\left( \chi_1 \otimes \chi_2 \right)
\uparrow_{S_{n_1} \times S_{n_2}}^{S_{n_1+n_2}}
\end{array}
$$
is sent by $\ch$ to the usual product in $\Lambda$, that is, $\ch(\chi_1 * \chi_2)=\ch(\chi_1) \ch(\chi_2)$.  In particular, because each
{\it parabolic} or {\it Young subgroup}
$
S_{\lambda}:=S_{\lambda_1} \times \cdots \times S_{\lambda_\ell} \subset S_n
$
has a tensor product description for its {\it trivial} and {\it sign} characters as
$$
\begin{aligned}
\one_{S_\lambda} & \cong 
 \one_{S_{\lambda_1}} \otimes \cdots \otimes \one_{S_{\lambda_\ell}}, \\
\epsilon_{S_\lambda} & \cong 
 \epsilon_{S_{\lambda_1}} \otimes \cdots \otimes \epsilon_{S_{\lambda_\ell}}, 
\end{aligned}
$$
the map $\ch$ sends the induced representations 
$\one_{S_\lambda}\uparrow_{S_\lambda}^{S_n}$ and 
$\epsilon_{S_\lambda}\uparrow_{S_\lambda}^{S_n}$ 
to the products 
$$
\begin{aligned}
h_\lambda&:=h_{\lambda_1} \cdots h_{\lambda_\ell},\\
e_\lambda&:=e_{\lambda_1} \cdots e_{\lambda_\ell}
\end{aligned}
$$
where $h_{\lambda } , e_{\lambda }$, respectively, 
are the {\it complete homogeneous} and {\it elementary}
symmetric functions indexed by $\lambda $ and are defined as products of $h_d$ (resp. $e_d$) 
where 
$$
\begin{aligned}
h_d &= \sum_{1\le i_1 \le i_2 \le \cdots \le i_d} 
            x_{i_1}x_{i_2}\cdots x_{i_d},\\
e_d &= \sum_{1 \le i_1 < i_2 < \cdots < i_d } x_{i_1}x_{i_2} \cdots x_{i_d}
\end{aligned}
$$
In other words, $h_d$ is the sum of all monomials of degree $d$ while $e_d$ is the sum of all squarefree monomials of degree $d$.   

%

It is worth remarking that as $\lambda$ runs through the set of
partitions of $n$,
the sets $\{h_\lambda\}, \{e_\lambda\}$ and the set $\{ s_\lambda \} $ to be defined 
shortly all   give $\ZZ$-bases for the free
$\ZZ$-module $\Lambda_n$, while $\{p_\lambda\}$ gives a $\CC$-basis for
the extended $\CC$-vector space of all class functions on $S_n$.
We also record two standard identities 
\cite[Chap. I, \S 2]{Macdonald}
for later use, with conventions $h_0=e_0=1$:
\begin{align}
\label{h-to-p-identity}
H(u)&:=\displaystyle\sum_{d=0}^\infty h_d u^d 
 &=&\displaystyle\prod_{i=1}^\infty(1-x_i u)^{-1}
   &=&  \exp\left( \displaystyle\sum_{m \geq 1} \frac{p_m u^m}{m} \right) \\
\label{e-to-p-identity}
E(u)=H(-u)^{-1}&:=\displaystyle\sum_{d=0}^\infty e_d u^d  
 &=&\displaystyle\prod_{i=1}^\infty(1+x_i u)
   &=& \exp\left( - \displaystyle\sum_{m \geq 1} \frac{p_m (-u)^m}{m} \right). 
\end{align}
We will also use the well-known identity
\begin{equation}
\label{Girard-Newton-identity}
h_n = \sum_{\lambda: |\lambda|=n} \frac{p_\lambda}{z_\lambda}
\end{equation}
that follows either from \eqref{h-to-p-identity}
or the fact that $h_n=\ch(\one_{S_n})$.

There are many ways to define the {\it Schur function}\footnote{See \cite[\S 7.10]{Stanley} for the combinatorial definition via column-strict tableaux.} $s_\lambda$.  One way is
through either of the {\it Jacobi-Trudi} and {\it N\"agelsbach-Kostka} or
determinants that express $s_\lambda$ in terms of $h_n$ or $e_n$:
\begin{align}
\label{Jacobi-Trudi}
s_\lambda=\det( h_{\lambda_i - i + j} )_{i,j=1,2,\ldots,\lambda^t_1},\\
\label{dual-Jacobi-Trudi}
s_\lambda=\det( e_{\lambda^t_i - i + j} )_{i,j=1,2,\ldots,\lambda_1}.
\end{align}
Here $\lambda^t$ is the {\it conjugate} of $\lambda$,
obtained by swapping rows and columns of
the {\it Ferrers diagram}:
$$
\lambda=(4,2,1)=
\stableau{
{\ }&{\ }&{\ }&{\ }\\
{\ }&{\ }\\
{\ }&}
\qquad
\lambda^t=(3,2,1,1)=
\stableau{
{\ }&{\ }&{\ }\\
{\ }&{\ }\\
{\ }&
{\ }}.
$$
%
%

The involution on $R$ that sends an $S_n$-character $\chi$ to
the tensor product $\epsilon_{S_n} \otimes \chi$ corresponds to
the fundamental involution 
$\Lambda \overset{\omega}{\longrightarrow} \Lambda$
that 
swaps $h_n \leftrightarrow e_n$ and
$p_n \leftrightarrow (-1)^{n-1} p_n$ for each $n$,
along with swapping $s_\lambda \leftrightarrow s_{\lambda^t}$.

Branching and induction for $S_{n-1} \subset S_n \subset S_{n+1}$
have a well-known symmetric function interpretation
\cite[Examples I.5.3(c), I.8.26]{Macdonald}: for
an $S_n$-character $\chi$ with $\ch(\chi)=f(p_1,p_2,\ldots)$ one has
\begin{equation}
\label{induction-restriction-in-power-sums}
\begin{aligned}
\ch\left(\chi\downarrow^{S_n}_{S_{n-1}}\right) 
  &= \frac{\partial}{\partial p_1} \ch(\chi), \\
\ch\left(\chi\uparrow^{S_{n+1}}_{S_n} \right) 
  &= p_1 \cdot \ch(\chi).
\end{aligned}
\end{equation}

The {\it Pieri Rule} expresses  
\begin{equation}
\label{Pieri-rule}
s_{\mu } h_{r}=\sum_{\lambda } s_{\lambda }
\end{equation}
where the sum is over all partitions $\lambda$ for which 
\begin{itemize}
\item one has nesting of the Ferrers diagrams $\mu \subset \lambda$,
that is, $\mu_i \leq \lambda_i$ for $1 \leq i \leq \ell(\mu)$,
and 
\item the skew Ferrers diagram $\lambda/\mu$ added to $\mu$ to obtain
$\lambda$ is a {\it horizontal strip} of size $r$, that is,
each of its $r$ boxes lies in a different column.  
\end{itemize}

The description of the $S_n$-representations 
on the cohomology of configuration spaces in $\RR^d$,
found in Section~\ref{cohomology-section}, 
makes crucial use of the {\it plethysm} operation on 
characters and symmetric functions
$R_{n_1} \times R_{n_2} \longrightarrow R_{n_1 n_2}$,
which we will denote $(\chi_1,\chi_2) \longmapsto \chi_1[\chi_2]$.
One way to describe it 
\cite[\S I.8]{Macdonald} is for genuine characters
$\chi_i$ with $i=1,2$ of $S_{n_i}$-representations on vector 
spaces $U_i$ for $i=1,2$.  Then their plethysm $\chi_1[\chi_2]$ is the
character of an $S_{n_1 n_2}$-representation induced up from the 
representation of the {\it wreath product} 
$S_{n_1}[S_{n_2}]$
which is the normalizer subgroup within $S_{n_1 n_2}$ of
the product $(S_{n_2})^{n_1}$.  The representation to be
induced is the one in which 
$S_{n_1}[S_{n_2}]$
acts on 
$$
U_1 \otimes \left( U_2 ^{\otimes n_1} \right)
=U_1 \otimes \underbrace{U_2 \otimes \cdots \otimes U_2}_{n_1\text{ factors}}
$$
by letting 
\begin{itemize}
\item $(S_{n_2})^{n_1}$ act componentwise on the tensor
factors in $U_2 ^{\otimes n_1}$, and
\item $S_{n_1}$ simultaneously acts on $U_1$, while permuting the
tensor positions in $U_2 ^{\otimes n_1}$.
\end{itemize}
In terms of  the symmetric functions $f$ and $g$ 
associated to $\chi_1$ and $\chi_2$ by  the characteristic map $ch$, 
the plethysm $f[g]$ is the symmetric function obtained by writing 
$g=\sum_{i=1}^\infty \xx^{\alpha^{(i)}}$ as a sum of monomials 
$\xx^{\alpha^{(i)}}=x_1^{\alpha_1^{(i)}} =x_2^{\alpha_2^{(i)}} \cdots$, each with
coefficient $1$, and then 
$$
f[g]:=f(x_1,x_2,\ldots)|_{x_i \mapsto \xx^{\alpha^{(i)}}}
$$
In particular, $f=f[h_1]=h_1 [f]$.
We will later use a few plethysm facts; see, e.g., 
\cite[\S I.8]{Macdonald}:
\begin{eqnarray}
\label{plethysm-is-ring-morphism}
(f_1f_2)[g] &=& (f_1[g])(f_2[g])\\
\label{omega-and-plethysm}
\omega \left( f[g] \right) &=& \omega^n(f) [ \omega(g) ] \text{ if }g \in \Lambda_n\\
\label{plethysm-of-sum}
s_{\lambda }[g_1+g_2] 
 &=& \sum_{\mu \subseteq \lambda  } s_{\mu } [g_1] s_{\lambda/\mu }[g_2].
\end{eqnarray}

\noindent
In particular, since \eqref{Jacobi-Trudi}, \eqref{dual-Jacobi-Trudi} show that
$h_n=s_{(n)}$ and $e_n=s_{(1^n)}$, one deduces from 
\eqref{plethysm-of-sum} that
\begin{eqnarray}
\label{h-plethysm}
h_n[g_1+g_2] &= \displaystyle \sum_{i=0}^n h_i [g_1] h_{n-i}[g_2],\\
\label{e-plethysm}
e_n[g_1+g_2] &= \displaystyle \sum_{i=0}^n e_i [g_1] e_{n-i}[g_2].
\end{eqnarray}

\subsection{Representation stability}

We start by rephrasing the definition from the introduction.
\begin{defn}
For $\lambda=(\lambda_1,\lambda_2,\ldots,\lambda_\ell)$ and
$m \geq 0$, 
let
$
\lambda+(m):=(\lambda_1+m,\lambda_2,\ldots,\lambda_\ell).
$
For example, 
$$
\lambda=(4,2,1)=
\stableau{
{\ }& {\ }&{\ }&{\ }\\
{\ }& {\ }\\
{\ }}
$$
will have
$$
\lambda^{(+1)}=(5,2,1)=
\stableau{
{\ }& {\ }&{\ }&{\ }&{\ }\\
{\ }& {\ }\\
{\ }}, 
\qquad
\lambda^{(+2)}=(6,2,1)=
\stableau{
{\ }& {\ }&{\ }&{\ }&{\ }\\
{\ }& {\ }\\
{\ }}, 
\text{ etc.}
$$

\noindent
For virtual $S_n$-characters $\chi$ in $R_n$ with
$
\displaystyle\chi=\sum_{\lambda:|\lambda|=n} c_\lambda \chi^{\lambda},
$
define $\chi^{(+m)}$ in $R_{n+m}$ via the expansion
$$
\chi^{(+m)}=\sum_{\lambda:|\lambda|=n} c_\lambda \chi^{\lambda+(m)}.
$$
Note that the operation $\chi \mapsto \chi^{(+m)}$ is simply the
$m^{th}$ iterate of the operation  $\chi \mapsto \chi^{(+1)}$.

Say that a sequence of $S_n$-characters $\{\chi_n\}$
{\it stabilizes beyond $n_0$} if $\chi_n=\chi_{n_0}^{(+(n-n_0))}$ for $n \geq n_0$,
and that $\{\chi_n\}$ {\it stabilizes sharply at $n_0$} if $n_0$ is the smallest
integer with the above property.
\end{defn}

The following basic stability lemma is a variant 
of  Hemmer's \cite[Lem. 2.3, Thm. 2.4]{Hemmer}.
To state it, for a character
$\chi$ in $R_{n_0}$, define a sequence of characters
$M(\chi):=\{M_n(\chi)\}$ via 
\begin{equation}
\label{M-operator-definition}
M_n(\chi) = 
\begin{cases}
\chi * \one_{S_{n-n_0}} & \text{ if }n \geq n_0,\\
0 & \text{otherwise.}
\end{cases}
\end{equation}

Equivalently, if $\ch(\chi)=f$, then 
\begin{equation}
\label{M-operator-definition-in-symm-fns}
\ch\left( M_n(\chi) \right) = 
\begin{cases}
f  \cdot h_{n-n_0} & \text{ if }n \geq n_0,\\
0 & \text{otherwise.}
\end{cases}
\end{equation}


\begin{lem}
\label{Hemmer's-lemma}
For any partition $\mu$,
the sequence $M(\chi^\mu)$ obeys this inequality
\begin{equation}
\label{stability-monotonicity}
M_{n+1}(\chi^\mu) \quad \geq \quad M_n(\chi^\mu)^{(+1)},
\end{equation}
for $n \geq |\mu|$,
with equality if and only if $n \geq |\mu|+\mu_1$.
Consequently 
\begin{itemize}
\item
$M(\chi^\mu)$ stabilizes sharply at $n_0=|\mu|+\mu_1$, and more generally,
\item any genuine character
$\chi=\sum_\mu c_\mu \chi^{\mu} \geq 0$ 
has $M(\chi)$ stabilizing sharply at 
$$
n_0=\max\{|\mu|+\mu_1: c_\mu \neq 0\}.
$$
\end{itemize}
\end{lem}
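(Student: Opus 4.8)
The plan is to work entirely on the symmetric function side via the characteristic map, where $M_n(\chi^\mu)$ corresponds to $s_\mu h_{n-|\mu|}$, and to compute the ``difference'' $M_{n+1}(\chi^\mu) - M_n(\chi^\mu)^{(+1)}$ as an explicit nonnegative combination of irreducible characters. First I would translate the operation $\chi \mapsto \chi^{(+1)}$ into symmetric functions: if $f = \sum_\lambda c_\lambda s_\lambda$ is homogeneous of degree $n$, then $\ch^{-1}$ of its stabilized image is $\sum_\lambda c_\lambda s_{\lambda+(1)}$, i.e.\ $\chi^{(+1)}$ is obtained by prepending a box to the first row of every constituent. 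So I need to compare the Pieri expansion of $s_\mu h_{n+1-|\mu|}$ with the result of applying ``add a box to the first row'' to the Pieri expansion of $s_\mu h_{n-|\mu|}$.

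Next I would make the combinatorics of this comparison precise. By the Pieri rule \eqref{Pieri-rule}, $M_n(\chi^\mu)$ is $\sum_\lambda \chi^\lambda$ over partitions $\lambda \supseteq \mu$ with $\lambda/\mu$ a horizontal strip of size $n-|\mu|$; equivalently, $\lambda$ ranges over partitions with $\mu_{i+1} \le \lambda_{i+1} \le \mu_i$ for all $i$ (interlacing), with $\lambda_1 \geq \mu_1$ and $|\lambda| = n$. When I add a box to the first row of each such $\lambda$, I obtain partitions $\lambda' = \lambda+(1)$ of size $n+1$; these still interlace with $\mu$ (the only inequality touched is $\lambda_1 \le $ nothing, and $\mu_2 \le \lambda'_2 = \lambda_2 \le \mu_1$ is unaffected), so each such $\lambda'$ also appears in $M_{n+1}(\chi^\mu)$. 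Hence $M_{n+1}(\chi^\mu) \geq M_n(\chi^\mu)^{(+1)}$, with the difference equal to $\sum \chi^\nu$ over those partitions $\nu$ of size $n+1$ interlacing $\mu$ that are \emph{not} of the form $\lambda+(1)$ for an interlacing $\lambda$ — that is, precisely those $\nu$ with $\nu_1 = \nu_2$ (so that removing a first-row box would violate $\lambda_1 \ge \lambda_2$) but which still interlace $\mu$ and have $\nu_1 \le$ the allowed maximum. Counting when such $\nu$ exist: one needs $\nu_1 = \nu_2$ with $\nu_2 \le \mu_1$ and the rest of $\nu$ interlacing the rest of $\mu$ with the right size; the extreme case is $\nu = (\mu_1, \mu_1, \mu_2, \dots)$ which has size $|\mu|+\mu_1$, and one checks no smaller-indexed obstruction occurs, so such $\nu$ exist exactly when $n+1 \le |\mu|+\mu_1$, i.e.\ the difference vanishes iff $n \ge |\mu|+\mu_1$. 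This proves \eqref{stability-monotonicity} and its equality condition.

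From the monotonicity with equality for $n \ge |\mu|+\mu_1$, the first bullet is immediate: $M(\chi^\mu)$ stabilizes beyond $|\mu|+\mu_1$, and it stabilizes \emph{sharply} there because \eqref{stability-monotonicity} is a strict inequality at $n = |\mu|+\mu_1 - 1$ (the difference is a nonzero genuine character, by the existence of the witness $\nu$ above). For the second bullet, I would sum over $\mu$: since $\chi = \sum_\mu c_\mu \chi^\mu$ with all $c_\mu \ge 0$, the operator $M$ is additive, so $M_{n+1}(\chi) - M_n(\chi)^{(+1)} = \sum_\mu c_\mu\bigl(M_{n+1}(\chi^\mu) - M_n(\chi^\mu)^{(+1)}\bigr)$ is a nonnegative sum of genuine characters, vanishing iff every term with $c_\mu \neq 0$ vanishes, i.e.\ iff $n \ge \max\{|\mu|+\mu_1 : c_\mu \neq 0\}$; and it is strictly positive for $n$ one less than this maximum because the $\mu$ achieving the maximum contributes a nonzero term and, crucially, nonnegativity prevents any cancellation against it. Thus $M(\chi)$ stabilizes sharply at $n_0 = \max\{|\mu|+\mu_1: c_\mu \neq 0\}$.

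The main obstacle I anticipate is the bookkeeping in the middle step — pinning down exactly which interlacing partitions $\nu$ of size $n+1$ fail to be of the form $\lambda+(1)$, and verifying that the smallest such $\nu$ has size exactly $|\mu|+\mu_1$ (so that the sharp threshold is correct and not off by one). This requires careful handling of the boundary inequality $\nu_1 = \nu_2 \le \mu_1$ together with the constraint that $\nu$ restricted to rows $\ge 2$ still interlaces $\mu$ restricted to rows $\ge 2$ — in particular checking the edge cases where $\mu$ has length one, or $\mu_1 = \mu_2$, so that the ``witness'' constituent is not spuriously excluded. Everything else (additivity of $M$, the translation to symmetric functions, the non-cancellation argument from positivity) is routine once this combinatorial lemma is in hand.
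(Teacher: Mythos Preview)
Your approach is exactly the paper's: use Pieri to write $M_n(\chi^\mu)=\sum_{\lambda\in L(n)}\chi^\lambda$ where $L(n)$ is the set of partitions of $n$ with $\lambda/\mu$ a horizontal strip, then study the injection $L(n)\hookrightarrow L(n+1)$ given by $\lambda\mapsto\lambda+(1)$. The consequences for sharp stabilization, and the extension to a genuine $\chi=\sum c_\mu\chi^\mu$ via positivity and non-cancellation, are handled correctly.

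However, your characterization of the complement of the injection is wrong, and this breaks the ``if and only if'' clause. You claim the $\nu\in L(n+1)$ not of the form $\lambda+(1)$ are precisely those with $\nu_1=\nu_2$. But there is a second obstruction you have missed: setting $\lambda=(\nu_1-1,\nu_2,\ldots)$, one needs not only $\lambda_1\ge\lambda_2$ but also $\lambda_1\ge\mu_1$ so that $\lambda\supseteq\mu$. Since any $\nu\in L(n+1)$ has $\nu_2\le\mu_1$ (horizontal strip condition), the combined requirement $\nu_1-1\ge\max(\nu_2,\mu_1)$ simplifies to $\nu_1>\mu_1$, so the complement is exactly $\{\nu\in L(n+1):\nu_1=\mu_1\}$ --- equivalently, those $\nu$ whose horizontal strip $\nu/\mu$ is confined to the first $\mu_1$ columns. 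Your condition $\nu_1=\nu_2$ is strictly stronger (it forces $\nu_1=\mu_1$, but not conversely). For instance with $\mu=(3)$ and $n=3$, the partition $\nu=(3,1)$ lies in $L(4)$, has $\nu_1=\mu_1$, and is not in the image of $L(3)\hookrightarrow L(4)$, yet $\nu_1\ne\nu_2$. So your argument as written shows neither that the difference vanishes for all $n\ge|\mu|+\mu_1$ (you have only shown a subset of the complement is empty) nor that it is nonzero for all $|\mu|\le n<|\mu|+\mu_1$ (your subset can be empty there, as in the example). With the corrected description, the complement is nonempty iff a horizontal strip of size $n+1-|\mu|$ fits in $\mu_1$ columns, i.e.\ iff $n+1-|\mu|\le\mu_1$, which is precisely $n<|\mu|+\mu_1$; this is what the paper does.
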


\begin{proof}
After proving the assertions in the first sentence, the rest follow easily.

The {\it Pieri rule} \eqref{Pieri-rule} says that for $n \geq |\mu|$ 
one has 
$M_n(\chi^\mu) = \sum_{\lambda} \chi^{\lambda}$
in which $\lambda$ runs through the set, which we will denote here by $L(n)$, 
of all partitions of $n$ for which
$\lambda/\mu$ is a {\it horizontal strip} of size $n-|\mu|$,
that is, $\lambda/\mu$ is a skew shape whose cells lie
in different columns.  For example, if $\mu=(7,6,3)$ then
$\lambda=(10,6,5,1)$ shown below lies in $L(n)$ for $n=|\lambda|=22$,
and squares of the horizontal strip $\lambda/\mu$ are indicated
with $\times$ (below the first row) and $\otimes$ (in the first row):
\begin{equation}
\label{horizontal-strip-figure}
\lambda=\stableau{
{\ }& {\ }&{\ }&{\ }&{\ }&{\ }&{\ }&\otimes&\otimes&\otimes\\
{\ }& {\ }&{\ }&{\ }&{\ }&{\ }\\
{\ }& {\ }&{\ }&\times&\times\\
\times
}
\end{equation}
The map $\lambda \mapsto \lambda^{(+1)}$ that adds an extra
$\otimes$ to the first row shown above
gives an injection
$L(n) \hookrightarrow L(n+1)$ which shows 
the inequality \eqref{stability-monotonicity}.  The case of equality
follows because the elements $\lambda$ of $L(n+1)$ not in the image
of this injection are those where the horizontal strip $\lambda/\mu$ (of size
$n+1-|\mu|$) is confined within the first $\mu_1$ columns.
Such $\lambda$ exist if and only if $n+1-|\mu| \leq \mu_1$, or
equivalently, $n < |\mu|+\mu_1$.
\end{proof}

We need a refinement of Lemma~\ref{Hemmer's-lemma}
for stabilization of individual irreducible multiplicities.

\begin{lem}
\label{refined-Hemmer-lemma}
For $\nu, \mu$ partitions and $n \geq |\mu|$,
one has
$$
\langle 
\,
\chi^{(n -|\nu|,\nu)} \, , \, 
M_n(\chi^{\mu}) 
\,
\rangle_{S_n}
= \begin{cases}
1&\text{ if }\nu \subseteq \mu, \text{ with }\mu/\nu\text{ a horizontal strip, and }n \geq |\nu|+\mu_1,\\
0& \text{ otherwise.}
\end{cases}
$$
\end{lem}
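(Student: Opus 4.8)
The plan is to compute the inner product $\langle \chi^{(n-|\nu|,\nu)}, M_n(\chi^\mu)\rangle_{S_n}$ directly via the Pieri rule, exactly as in the proof of Lemma~\ref{Hemmer's-lemma}. By \eqref{M-operator-definition-in-symm-fns} and the Pieri rule \eqref{Pieri-rule}, for $n \geq |\mu|$ one has $M_n(\chi^\mu) = \sum_{\lambda \in L(n)} \chi^\lambda$, where $L(n)$ is the set of partitions $\lambda$ of $n$ with $\mu \subseteq \lambda$ and $\lambda/\mu$ a horizontal strip. Since $\{\chi^\lambda\}$ is an orthonormal basis of $R_n$, the inner product in question equals $1$ if the partition $\lambda_\star := (n-|\nu|,\nu)$ lies in $L(n)$, and $0$ otherwise. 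So the entire task reduces to determining exactly when $(n-|\nu|,\nu)$ contains $\mu$ with horizontal-strip complement.

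The key step is therefore to unpack the condition ``$\mu \subseteq (n-|\nu|,\nu)$ and $(n-|\nu|,\nu)/\mu$ is a horizontal strip.'' First I would observe that if this holds, then comparing rows $2,3,\ldots$ of $\lambda_\star$ (which are $\nu_1,\nu_2,\ldots$) against $\mu_2,\mu_3,\ldots$, the horizontal-strip condition forces each cell of $\lambda_\star/\mu$ below the first row to sit in its own column; but in rows $\geq 2$ the shape $\lambda_\star$ agrees with $\nu$, so this pins down $\mu_i = \nu_i$ for all $i \geq 2$ unless a cell is removed, and the strip condition together with $\mu_i \le \lambda_{\star,i}$ in fact forces $\nu \subseteq \mu$ with $\mu/\nu$ a horizontal strip in the relevant rows — more precisely, one shows the conditions are equivalent to: $\nu \subseteq \mu$, the skew shape $\mu/\nu$ is a horizontal strip, and additionally $n - |\nu| \geq \mu_1$ so that the (long) first row of $\lambda_\star$ both contains $\mu_1$ and its complement stays in the first row. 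I would verify the two implications separately: (i) given $\nu \subseteq \mu$, $\mu/\nu$ a horizontal strip, and $n-|\nu| \geq \mu_1$, check that $\mu \subseteq \lambda_\star$ and that $\lambda_\star/\mu$ — which consists of the top-row cells in columns $\mu_1+1,\ldots,n-|\nu|$ together with, in each lower row $i\ge 2$, the cells in columns $\mu_i+1,\ldots,\nu_i$ — has all cells in distinct columns; the lower-row part is a horizontal strip because $\mu/\nu$ is, and it is columnwise disjoint from the top-row part precisely because $n - |\nu| \geq \mu_1 \geq \mu_i \geq \nu_i$ is false in general, so one must instead argue that a column $c \le \mu_1$ already contains a cell of $\mu$ in row $1$, hence contributes nothing new in row $1$, and any lower-row cell of $\lambda_\star/\mu$ in such a column $c$ would need $c > \mu_i$, which is compatible — here I need to be slightly careful, and the cleanest route is the standard fact that $\lambda/\mu$ is a horizontal strip iff $\mu_1 \ge \lambda_2 \ge \mu_2 \ge \lambda_3 \ge \cdots$, i.e. iff $\lambda$ and $\mu$ interlace. (ii) Conversely, if $\lambda_\star \in L(n)$, the interlacing $\mu_1 \ge \lambda_{\star,2} = \nu_1 \ge \mu_2 \ge \nu_2 \ge \cdots$ gives immediately $\nu \subseteq \mu$, $\mu/\nu$ a horizontal strip, and the first-row inclusion $\mu_1 \le \lambda_{\star,1} = n - |\nu|$ gives the numerical bound.

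Thus the whole argument is: rewrite $M_n(\chi^\mu)$ via Pieri, use orthonormality of irreducible characters to reduce to membership $\lambda_\star \in L(n)$, and then translate ``horizontal strip'' into the interlacing inequalities $\mu_1 \ge \nu_1 \ge \mu_2 \ge \nu_2 \ge \cdots$ together with $\mu_1 \le n - |\nu|$; the former inequalities say exactly ``$\nu \subseteq \mu$ with $\mu/\nu$ a horizontal strip'' (the first row of the ambient $\lambda_\star$ being irrelevantly long), and the latter says exactly ``$n \ge |\nu| + \mu_1$.'' The main obstacle — really the only place requiring attention — is making sure the long first row of $(n-|\nu|,\nu)$ is handled correctly: one must check that when $n - |\nu| < \mu_1$ the partition $\mu$ genuinely fails to fit inside $(n-|\nu|,\nu)$ (so the inner product is $0$ even though $\nu \subseteq \mu$ with horizontal-strip complement may still hold), which is immediate since $\mu_1 \le \lambda_{\star,1}$ is part of $\mu \subseteq \lambda_\star$. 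Everything else is the routine bookkeeping of the interlacing criterion for horizontal strips, which I would state once and apply, rather than re-derive.
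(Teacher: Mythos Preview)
Your proposal is correct and follows essentially the same route as the paper: reduce via the Pieri rule to asking whether $\lambda_\star=(n-|\nu|,\nu)$ lies in $L(n)$, then translate the horizontal-strip condition on $\lambda_\star/\mu$ into the stated conditions on $\nu,\mu,n$. Your use of the interlacing criterion $\lambda_i \ge \mu_i \ge \lambda_{i+1}$ makes the translation especially clean and is exactly what the paper's terser phrasing (``the horizontal strip $\mu/\nu$ occupies the columns of $\lambda$ complementary to those occupied by $\lambda/\mu$'') is encoding; note that your attempted direct column count in (i) had an off-by-one in the row indexing (row $i\ge 2$ of $\lambda_\star$ has length $\nu_{i-1}$, not $\nu_i$), but you rightly abandoned that in favor of interlacing, which avoids the bookkeeping entirely.
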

\begin{proof}
This is just another restatement of the Pieri rule as in the previous
proof:  the partitions $\lambda$ in $L(n)$ in that proof 
biject with the $\nu \subseteq \mu$ for which $\mu/\nu$ is a horizontal
strip and $n \geq |\nu|+\mu_1$, via the inverse bijections
$\nu \mapsto \lambda=(n-|\nu|,\nu)$, and 
$\lambda \mapsto \nu=(\lambda_2,\lambda_3,\ldots)$.  
The horizontal strip $\mu/\nu$ occupies the columns of $\lambda$
{\it complementary} to those occupied by the
horizontal strip $\lambda/\mu$.  
One needs $n \geq |\nu|+\mu_1$, or $n - |\nu| \geq \mu_1$,
so that the first row of $\lambda=(n-|\nu|,\nu)$ contains
the first row of $\mu$.
\end{proof}
\begin{ex}
To illustrate the bijection in this proof,
in \eqref{horizontal-strip-figure} with $\mu=(7,6,3), \lambda=(10,6,5,1)$,
one has $\nu=(6,5,1)$, so that $\mu/\nu=(7,6,3)/(6,5,1)$ is
the horizontal strip filled with $\stableau{\bullet}$'s in this picture:
$$
\stableau{
{\ }& {\ }&{\ }&{\ }&{\ }&{\ }&\bullet\\
{\ }& {\ }&{\ }&{\ }&{\ }&\bullet\\
{\ }& \bullet& \bullet\\
}.
$$
\end{ex}

\subsection{Cohomology of configurations of points in $\RR^d$}
\label{cohomology-section}

The combinatorial description of the cohomology of $\Conf(n,\RR^d)$ 
as an $S_n$-representation is known.
For $d=2$, it follows from work of Arnol'd \cite{Arnold} and of 
Lehrer and Solomon \cite{LehrerSolomon}.
For arbitrary $d \geq 2$, following on work
of Cohen \cite[Chap. III]{CohenLadaMay} and
Cohen and Taylor \cite{CohenTaylor},
Sundaram and Welker \cite{SundaramWelker}  
proved an equivariant version \cite[Theorem 2.5]{SundaramWelker}
of 
the {\it Goresky-MacPherson formula} \cite[III.1.3 Thm. A]{GoreskyMacPherson},
and used this to show that the reduced cohomology $\tilde{H}^*(\Conf(n,\RR^d))$ affords
\begin{itemize}
\item for $d$ even, the {\it Whitney homology} of the set 
partition lattice (see Section~\ref{set-partition-review-subsection}), and 
\item for $d$ odd, the closely 
related {\it higher Lie characters} (see Section~\ref{higher-Lie-section}).
\end{itemize}
To state their result more precisely,
we introduce a few definitions.

\begin{defn}
A partition 
$
\lambda=(\lambda_1,  \ldots, \lambda_\ell)
       =1^{m_1} 2^{m_2} \cdots
$
with $m_j$ parts of size $j$ has {\it rank}
$$
\rank(\lambda)
 := \sum_{k \geq 1}(\lambda_k-1)
  = \sum_{j \geq 1}(j-1)m_j.
$$
\end{defn}

\begin{defn}
\label{higher-Lie-and-Whitney-defn}
Let $C_n$ be the subgroup $\langle c \rangle$ generated by an $n$-cycle $c$ in $S_n$.
The {\it Lie character} of $S_n$ is the induction of any
linear character $C_n \overset{\chi_\zeta}{\longrightarrow} \CC^\times$
that sends $c$ to a primitive $n^{th}$ root of unity:
\begin{equation}
\label{higher-Lie-defn}
\Lie_{(n)}:=\chi_\zeta \uparrow_{C_n}^{S_n}
\end{equation}
Denote by $\ell_n$ the symmetric function which is the 
Frobenius image of $\Lie_{(n)}$, and by $\pi_n$ the image of 
its twist by the sign:
$$
\begin{array}{rllll}
\ell_n &:=& \ch(\Lie_{(n)}),& &\\
\pi_n &:=& \ch(\epsilon_{S_n} \otimes \Lie_{(n)}) &=& \omega(\ell_n).
\end{array}
$$
For a partition $\lambda=1^{m_1} 2^{m_2} \cdots$ of $n$ having $m_j$ parts of size $j$, define $S_n$-characters $\WH_\lambda, \Lie_\lambda$ as
those having as Frobenius images the following symmetric functions:
\begin{eqnarray}
\label{Lie-character-expression}
\ch (\Lie_\lambda)&=&
\displaystyle \prod_{j \geq 1} h_{m_j}[\ell_j],\\
\label{equivariant-Whitney-homology-expression}
\ch (\WH_\lambda)&=&
\displaystyle \prod_{\text{odd } j \geq 1} h_{m_j}[\pi_j]
\prod_{\text{even } j \geq 2} e_{m_j}[\pi_j].
\end{eqnarray}
\end{defn}

\begin{thm}{\cite[Thm. 4.4(iii)]{SundaramWelker}}
\label{Sundaram-Welker-thm}
Fix $d \geq 2$ and $i \geq 0$.  Then $\tilde{H}^i(\Conf(n,\RR^d))$ 
\begin{itemize}
\item vanishes unless $i$ is divisible by $d-1$, say $i=j(d-1)$, 
\item in which case, 
as $S_n$-representations,
$$
\tilde{H}^{j(d-1)}(\Conf(n,\RR^d))
\cong
\begin{cases}
\displaystyle \Lie^j_n:=\oplus_{\lambda} \Lie_\lambda 
  & \text{ for }d\text{ odd},\\
\displaystyle WH_j(\Pi_n):=\oplus_{\lambda} \WH_\lambda 
  & \text{ for }d\text{ even}.
\end{cases}
$$
where both direct sums above run over all partitions $\lambda$ of $n$ 
having $\rank(\lambda)=j$.
\end{itemize}
\end{thm}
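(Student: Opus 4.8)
The plan is to recover $\tilde H^*(\Conf(n,\RR^d))$ from the combinatorics of the partition lattice via the $S_n$-equivariant Goresky--MacPherson formula, in the manner of Sundaram--Welker \cite{SundaramWelker}. First I would identify $\Conf(n,\RR^d)$ with the complement $M_\AAA$ of the real subspace arrangement $\AAA=\AAA_{n,d}$ inside $(\RR^d)^n\cong\RR^{dn}$ whose subspaces are the diagonals $W_{ab}=\{\xx : x_a=x_b\}$ for $1\le a<b\le n$. Its intersection lattice, ordered by reverse inclusion, is $\Pi_n$: a set partition $\pi$ whose blocks have sizes forming an integer partition $\lambda=\lambda(\pi)=1^{m_1}2^{m_2}\cdots\vdash n$ corresponds to $W_\pi=\{\xx : x_a=x_b\text{ whenever }a,b\text{ lie in a common block}\}$, a subspace of real codimension $d\cdot\rank(\lambda)$ (since $\dim W_\pi=d\cdot(\#\text{blocks})$ and $\#\text{blocks}=n-\rank(\lambda)$). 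The $S_n$-action permutes the $W_{ab}$, inducing the usual $S_n$-action on $\Pi_n$, with $\Stab_{S_n}(\pi)\cong\prod_j(S_j\wr S_{m_j})$.

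Second, I would apply the equivariant Goresky--MacPherson formula \cite[Thm.~2.5]{SundaramWelker}, which gives an isomorphism of $S_n$-modules
\begin{equation*}
\tilde H^i(M_\AAA)\ \cong\ \bigoplus_{[\pi]}\ \Ind_{\Stab(\pi)}^{S_n}\Bigl(\tilde H_{\,d\cdot\rank(\lambda(\pi))-i-2}\bigl(\Delta(\hat 0,\pi)\bigr)\ \otimes\ \mathrm{or}_\pi\Bigr),
\end{equation*}
the sum over $S_n$-orbit representatives $\pi\in\Pi_n\setminus\{\hat 0\}$, where $\Delta(\hat 0,\pi)$ is the order complex of the open interval and $\mathrm{or}_\pi$ is the character by which $\Stab(\pi)$ acts, through its determinant, on the normal space $\RR^{dn}/W_\pi$. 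To evaluate the interval homology, note that $[\hat 0,\pi]\cong\prod_t\Pi_{\lambda_t}$ as posets, so $\Delta(\hat 0,\pi)$ is the order complex of the proper part of a product of bounded posets and is therefore homotopy equivalent, $\Stab(\pi)$-equivariantly, to an iterated suspension of the join $\Delta(\bar\Pi_{\lambda_1})*\cdots*\Delta(\bar\Pi_{\lambda_k})$. Combined with the classical fact that $\bar\Pi_m$ is Cohen--Macaulay (indeed shellable) of dimension $m-3$ with reduced homology concentrated in the top degree and $\tilde H_{m-3}(\bar\Pi_m)\cong_{S_m}\epsilon_{S_m}\otimes\Lie_{(m)}$ (Frobenius image $\pi_m=\omega(\ell_m)$), this shows $\tilde H_*(\Delta(\hat 0,\pi))$ is concentrated in the single degree $\rank(\lambda)-2$. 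Matching this against the index $d\cdot\rank(\lambda)-i-2$ in the formula forces $i=(d-1)\rank(\lambda)$; since $\rank(\lambda(\pi))$ ranges over $\{1,\dots,n-1\}$, this yields the vanishing statement and shows $\tilde H^{j(d-1)}$ is built from the $\pi$ with $\rank(\lambda(\pi))=j$.

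Third, I would convert each orbit summand into a symmetric function. As a $\Stab(\pi)=\prod_j(S_j\wr S_{m_j})$-module, $\tilde H_{\rank(\lambda)-2}(\Delta(\hat 0,\pi))$ is the external tensor product of the top homologies of the $\bar\Pi_{\lambda_t}$, with the factor $S_{m_j}$ permuting the $m_j$ copies of $\tilde H_{j-3}(\bar\Pi_j)$ \emph{up to the Koszul sign} incurred by transposing two join factors of homological degree $j-3$. Using that $\Ind_{S_j\wr S_m}^{S_{jm}}$ of $U^{\otimes m}$ with $S_m$ permuting the tensor factors (plainly, resp.\ with an extra sign) has Frobenius image $h_m[\ch U]$ (resp.\ $e_m[\ch U]$), and folding in $\mathrm{or}_\pi$ --- whose restriction to the within-block $S_j$ is $(\mathrm{sgn})^d$ and to the block-permuting $S_{m_j}$ is $(\mathrm{sgn})^{(j-1)d}$ --- one finds the signs collapse according to the parity of $d$. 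For $d$ odd the within-block twist turns $\pi_j$ into $\omega(\pi_j)=\ell_j$ and exactly cancels the Koszul sign, leaving $\ch=\prod_j h_{m_j}[\ell_j]=\ch(\Lie_\lambda)$; for $d$ even the within-block twist is trivial and the residual Koszul sign contributes $h_{m_j}$ for $j$ odd and $e_{m_j}$ for $j$ even, leaving $\ch=\prod_{j\text{ odd}}h_{m_j}[\pi_j]\prod_{j\text{ even}}e_{m_j}[\pi_j]=\ch(\WH_\lambda)$. Summing over $S_n$-orbits of rank-$j$ partitions of $n$ then produces $\Lie^j_n$, resp.\ $WH_j(\Pi_n)$, as claimed.

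I expect the crux --- and the main obstacle --- to be precisely this last step: keeping the three sources of signs consistent (the orientation character $\mathrm{or}_\pi$ from the equivariant Goresky--MacPherson formula; the Koszul signs from the suspension/join description of the homology of a product poset; and the sign twist $\pi_m=\omega(\ell_m)$ already present in the top homology of $\bar\Pi_m$), and verifying that their interaction with the wreath-product induction reproduces exactly the two plethystic formulas, with the parity of $d$ emerging as the single parameter that separates the two cases. By comparison the topological inputs --- the identification of the arrangement and its intersection lattice, the equivariant Goresky--MacPherson formula itself, shellability of $\bar\Pi_m$ with its Lie-representation top homology, and the product-poset homotopy type --- are classical and essentially off the shelf; the sign bookkeeping is what Sundaram--Welker carry out in detail, and it is where genuine care is needed.
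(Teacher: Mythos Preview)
The paper does not prove this theorem; it is quoted verbatim from Sundaram--Welker \cite[Thm.~4.4(iii)]{SundaramWelker} as background, with the paragraph preceding it explicitly attributing the argument (equivariant Goresky--MacPherson plus the combinatorics of $\Pi_n$) to that source. Your outline is a faithful and correct sketch of exactly that Sundaram--Welker proof, including your identification of the sign bookkeeping among the orientation character, the Koszul signs from the product/join decomposition of $(\hat 0,\pi)$, and the twist $\pi_m=\omega(\ell_m)$ as the genuine crux; there is nothing to compare against in the present paper.
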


We wish to reformulate this result in terms of the construction $M(-)$ from Definition
\eqref{M-operator-definition}.  Given a partition
$\lambda=1^{m_1} 2^{m_2} 3^{m_3} \cdots$ of $n$,
let  
$
\hat{\lambda}:= 2^{m_2} 3^{m_3} \cdots
$
denote the partition of $n-m_1$ obtained by removing all of its parts of size $1$.
Also define
$$
\begin{aligned}
 \widehat{\Lie}^i &:= \bigoplus_{\lambda} \Lie_\lambda,\\
\widehat{\WH}^i  &:=\bigoplus_{\lambda} \WH_\lambda,
\end{aligned}
$$
with both sums running over all partitions  $\lambda$ 
having $\rank(\lambda)=i$ and no parts of size $1$.  Although, these look potentially
like infinite sums, they are finite due to the following fact.

\begin{prop}
\label{bounds-on-derangement-size-prop} 
A partition $\lambda$ with no parts of size $1$ and $\rank(\lambda)=i$ has
$i+1 \leq |\lambda| \leq 2i$.
\end{prop}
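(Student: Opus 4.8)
The plan is to translate both bounds into elementary inequalities about the number of parts of a partition with all parts $\geq 2$. Write $\lambda = 2^{m_2} 3^{m_3} \cdots$ with $m_j \geq 0$ and $m_1 = 0$, and set $\ell := \ell(\lambda) = \sum_{j \geq 2} m_j$ for the number of parts. The rank is $\rank(\lambda) = \sum_{j \geq 2}(j-1)m_j$ and the size is $|\lambda| = \sum_{j \geq 2} j\, m_j$. The key observation is the exact identity
$$
|\lambda| = \rank(\lambda) + \ell(\lambda),
$$
which holds because $\sum_{j \geq 2} j\, m_j = \sum_{j \geq 2}(j-1)m_j + \sum_{j \geq 2} m_j$. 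So with $\rank(\lambda) = i$ fixed, the two desired bounds $i+1 \leq |\lambda| \leq 2i$ become equivalent to $1 \leq \ell(\lambda) \leq i$.

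For the two inequalities on $\ell(\lambda)$: since $\rank(\lambda) = i \geq 1$ the partition $\lambda$ is nonempty, so $\ell(\lambda) \geq 1$, giving the lower bound $|\lambda| \geq i+1$. For the upper bound, each part has size at least $2$, hence contributes at least $1$ to the rank; more precisely $\rank(\lambda) = \sum_{j \geq 2}(j-1)m_j \geq \sum_{j \geq 2} m_j = \ell(\lambda)$, since $j - 1 \geq 1$ for every part size $j \geq 2$. Thus $\ell(\lambda) \leq i$, which via the identity above gives $|\lambda| \leq 2i$.

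There is essentially no obstacle here — the statement is a two-line counting argument once one notices the identity $|\lambda| = \rank(\lambda) + \ell(\lambda)$. The only thing worth remarking is that both extremes are attained, confirming sharpness: the single part $\lambda = (i+1)$ has rank $i$ and size $i+1$, while $\lambda = (2^i)$ has rank $i$ and size $2i$. Consequently the sums defining $\widehat{\Lie}^i$ and $\widehat{\WH}^i$ range over the finitely many partitions $\lambda$ with $|\lambda|$ between $i+1$ and $2i$, all parts $\geq 2$, and $\rank(\lambda) = i$; in particular $\widehat{\Lie}^i_n = \widehat{\WH}^i_n = 0$ unless $i+1 \leq n \leq 2i$, which is exactly the index range appearing in the decomposition of $H^{i(d-1)}(\Conf(n,\RR^d))$ stated earlier.
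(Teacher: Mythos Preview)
Your proof is correct and follows essentially the same approach as the paper: both arguments hinge on the identity $|\lambda| = \rank(\lambda) + \ell(\lambda)$ together with the inequality $\ell(\lambda) = \sum_{j\geq 2} m_j \leq \sum_{j\geq 2}(j-1)m_j = \rank(\lambda)$, valid because every part has size at least $2$. Your additional remarks on sharpness (via $\lambda=(i+1)$ and $\lambda=(2^i)$) and the consequence for the index range of $\widehat{\Lie}^i_n$, $\widehat{\WH}^i_n$ are correct and go slightly beyond what the paper's proof states.
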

\begin{proof}
Note that $\lambda=2^{m_2} 3^{m_3} \cdots$ has 
$
\ell(\lambda)
=\sum_{j \geq 2}m_j
\leq \sum_{j \geq 2}m_j(j-1)
=\rank(\lambda)
=i.
$
Thus $\ell(\lambda)$ lies in the range $[1,i]$,
and hence 
$|\lambda|=i+\ell(\lambda)$ lies in the range $[i+1,2i]$.
\end{proof}

\noindent
Thus one has finer decompositions of 
$\widehat{\Lie}^i, \widehat{\WH}^i$, illustrated in
Tables~\eqref{Lie-hat-table}, \eqref{Wiltshire-Gordon-table} of
Appendix~\ref{appendix-section}:
\begin{equation}
\label{definition-of-finer-hats}
\begin{array}{rll}
 \widehat{\Lie}^i 
  &:= \displaystyle\bigoplus_{m=i+1}^{2i} \widehat{\Lie}^i_m
  & \text{ where }\quad 
    \displaystyle\widehat{\Lie}^i_m
         :=\bigoplus_{\lambda} \Lie_\lambda 
\\
 \widehat{\WH}^i 
  &:= \displaystyle\bigoplus_{m=i+1}^{2i} \widehat{\WH}^i_m
  & \text{ where }\quad 
    \displaystyle\widehat{\WH}^i_m
         :=\bigoplus_{\lambda} \WH_\lambda 
\end{array}
\end{equation}
with the rightmost sums running over $\lambda$ with $|\lambda|=m$,
no parts of size $1$, and $\rank(\lambda)=i$.

\begin{remark}
\label{derangement-remark}
It is not hard to show using the definition of plethysm that
for a partition $\lambda$ of $n$, both 
$\Lie_\lambda, \WH_\lambda$ are representations induced up to $S_n$
from one-dimensional characters of the $S_n$-centralizer 
$Z_\lambda$ for a permutation $w_\lambda$ having cycle type $\lambda$;
see Lehrer and Solomon \cite{LehrerSolomon} for a discussion in the
case of $\WH_\lambda$.  Consequently, $\Lie_\lambda, \WH_\lambda$ both 
have degree equal to the index $[S_n:Z_\lambda]$,
which is the number of permutations in $S_n$ of cycle type $\lambda$.

This now allows us to justify some assertions from the introduction about
derangements.  Note that a permutation $w$ in $S_n$ with cycle type
$\lambda$ is a derangement if and only $\lambda$ has
no parts of size $1$.  Also,  
$\rank(\lambda)=n-\ell(\lambda)$ where $\ell(\lambda)$ is the number of 
cycles of $w$.  Thus 
\begin{itemize} 
\item
$\widehat{\Lie}^i, \widehat{\WH}^i$
have degree $d_n$, the number
of derangements in $S_n$, and
\item 
$\widehat{\Lie}^i_n, \widehat{\WH}^i_n$ 
have degree $d_n^{n-i}$, the number of derangements in $S_n$
with $n-i$ cycles, since
\item 
$\Lie_\lambda, \WH_\lambda$
have degree $[S_n:Z_\lambda]$, the number of permutations of cycle type $\lambda$.
\end{itemize}
\end{remark}

As mentioned in the introduction, 
one way to show representation stability is via
the construction $\chi \mapsto M_n(\chi)$.
\begin{cor}
\label{M-expressions-of-cohomology}
For any partition $\lambda=1^{m_1} 2^{m_2} 3^{m_3} \cdots$ of $n$,
with $\hat{\lambda}:= 2^{m_2} 3^{m_3} \cdots$, 
one has
$$
\begin{aligned}
\Lie_\lambda &=M_n\left( \Lie_{\widehat{\lambda}} \right),\\
\WH_\lambda &= M_n\left( \WH_{\widehat{\lambda}} \right),
\end{aligned}
$$
and consequently
$$
\tilde{H}^{i(d-1)}(\Conf(n,\RR^d))
\cong
\begin{cases}
M_n\left( \widehat{\Lie}^i \right)
  & \text{ for }d\text{ odd},\\
M_n\left( \widehat{\WH}^i \right)
  & \text{ for }d\text{ even}.
\end{cases}
$$
\end{cor}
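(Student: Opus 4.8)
The plan is to prove the two identities $\Lie_\lambda = M_n(\Lie_{\widehat\lambda})$ and $\WH_\lambda = M_n(\WH_{\widehat\lambda})$ directly from the plethystic formulas \eqref{Lie-character-expression} and \eqref{equivariant-Whitney-homology-expression}, then assemble the last assertion by summing over $\lambda$ of fixed rank and invoking Theorem~\ref{Sundaram-Welker-thm}. The key observation is that a part of size $1$ contributes to the defining product a factor indexed by $j=1$. For the Lie case, the factor is $h_{m_1}[\ell_1]$; since $\Lie_{(1)}$ is the trivial character of $S_1$, we have $\ell_1 = h_1 = p_1$, and $h_{m_1}[p_1] = h_{m_1}$. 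For the Whitney case, the factor is $h_{m_1}[\pi_1]$ (since $1$ is odd), and $\pi_1 = \omega(\ell_1) = \omega(h_1) = e_1 = h_1$, so again this factor equals $h_{m_1}$. Thus in both cases
$$
\ch(\Lie_\lambda) = h_{m_1} \cdot \ch(\Lie_{\widehat\lambda}), \qquad
\ch(\WH_\lambda) = h_{m_1} \cdot \ch(\WH_{\widehat\lambda}),
$$
because the remaining factors in the products are precisely those indexed by $j \geq 2$, which are the defining factors for $\Lie_{\widehat\lambda}$ and $\WH_{\widehat\lambda}$ respectively (note $\widehat\lambda = 2^{m_2}3^{m_3}\cdots$ has $|\widehat\lambda| = n - m_1$).

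Now I would compare this with the symmetric-function description \eqref{M-operator-definition-in-symm-fns} of the operator $M_n$: for a character $\chi$ of $S_{n-m_1}$ with $\ch(\chi) = f$, one has $\ch(M_n(\chi)) = f \cdot h_{n - (n-m_1)} = f \cdot h_{m_1}$ (and $n \geq n - m_1$ trivially). Taking $\chi = \Lie_{\widehat\lambda}$ (a genuine $S_{n-m_1}$-character) gives $\ch(M_n(\Lie_{\widehat\lambda})) = h_{m_1} \cdot \ch(\Lie_{\widehat\lambda}) = \ch(\Lie_\lambda)$, and since $\ch$ is an isomorphism this yields $\Lie_\lambda = M_n(\Lie_{\widehat\lambda})$; identically for $\WH$. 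One small bookkeeping point to handle is the edge case $m_1 = 0$ (where $\widehat\lambda = \lambda$ and $h_0 = 1$) and the case where $\lambda = 1^n$ is all ones (where $\widehat\lambda$ is the empty partition of $0$, $\Lie_{\widehat\lambda} = \WH_{\widehat\lambda} = \one_{S_0}$, and the formula correctly gives $\Lie_{1^n} = \one_{S_n} = \WH_{1^n}$); both are consistent with the conventions already in place.

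For the final display, fix $i$ and sum over all partitions $\lambda$ of $n$ with $\rank(\lambda) = i$. By Theorem~\ref{Sundaram-Welker-thm}, $\tilde H^{i(d-1)}(\Conf(n,\RR^d)) \cong \bigoplus_\lambda \Lie_\lambda$ for $d$ odd (resp. $\bigoplus_\lambda \WH_\lambda$ for $d$ even). Grouping the partitions $\lambda$ by their "core" $\widehat\lambda$: every $\lambda$ of rank $i$ with $|\lambda| = n$ is obtained uniquely from a partition $\mu$ with no parts of size $1$ and $\rank(\mu) = i$ (so $i+1 \leq |\mu| \leq 2i$ by Proposition~\ref{bounds-on-derangement-size-prop}, hence finitely many choices), by adjoining $n - |\mu|$ parts of size $1$ — and this requires $n \geq |\mu|$, i.e. the term $M_n(\Lie_\mu)$ is correctly zero when $n < |\mu|$ by \eqref{M-operator-definition}. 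Using linearity of $M_n(-)$ in its argument, $\bigoplus_\lambda \Lie_\lambda = \bigoplus_\mu M_n(\Lie_\mu) = M_n\big(\bigoplus_\mu \Lie_\mu\big) = M_n(\widehat\Lie^i)$, and likewise for Whitney homology. I do not expect a genuine obstacle here; the only thing requiring care is the two evaluations $\ell_1 = h_1$ and $\pi_1 = h_1$ together with the trivial plethysms $h_{m_1}[h_1] = h_{m_1}$, plus making the "strip off parts of size one" bijection precise so that the finiteness and the vanishing convention for $M_n$ line up.
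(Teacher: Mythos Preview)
Your argument is correct and follows exactly the same route as the paper's proof: the paper simply says to compare \eqref{Lie-character-expression}, \eqref{equivariant-Whitney-homology-expression} with \eqref{M-operator-definition-in-symm-fns}, noting $\ell_1=\pi_1=h_1$ so that $h_{m_1}[\ell_1]=h_{m_1}[\pi_1]=h_{m_1}$. You have expanded this one-line sketch into a complete argument, including the edge cases and the explicit bijection $\lambda\leftrightarrow(\widehat\lambda,m_1)$ needed for the final summation, all of which are correct.
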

\begin{proof}
Compare \eqref{Lie-character-expression},  
\eqref{equivariant-Whitney-homology-expression} with \eqref{M-operator-definition-in-symm-fns}, 
noting $\ell_1 = \pi_n=h_1$, so 
$h_{m_1}[\ell_1]=h_{m_1}[\pi_1]=h_{m_1}$.
\end{proof}

\begin{remark}
\label{necklace-and-maj-remark}
Although not needed in the sequel, it may be worth noting that
the symmetric function $\ell_n=\ch(\Lie_{(n)})$
has some well-known equivalent formulations:

\begin{itemize}

\item
Definition \eqref{higher-Lie-defn} leads to the expression
$
\ell_n
   = \frac{1}{n} \sum_{d | n} \mu(d) p_d^{\frac{n}{d}},
$
with $\mu(d)$ the usual number-theoretic {\it M\"obius function}.

\item 
Using M\"obius inversion, one can reformulate this as
$
\ell_n = \sum_{\nu} x_{\nu_1} \cdots x_{\nu_n},
$
in which $\nu=(\nu_1,\ldots,\nu_n)$ runs through all 
{\it primitive necklaces}, that is,
$C_n$-orbits of sequences in $\{1,2,\ldots\}^n$ having $n$ {\it distinct} 
cyclic shifts.

\item
Using Springer's theorem on regular
elements \cite[Prop. 4.5]{Springer} and Lusztig's expression
for fake degree polynomials in type $A$ (see Stanley \cite[Prop. 4.11]{Stanley-invariants}, Definition \eqref{higher-Lie-defn} 
leads to the following
irreducible decomposition for $\Lie_n$, related to the Lie idempotent
of Klyachko \cite{Klyachko}, and often attributed to Kra{\'s}kiewicz and Weyman \cite{KraskiewiczWeyman}:
$
\ell_n   
   = \sum_{Q} \chi^{\shape(Q)}, 
$
where $Q$ runs over the set of {\it standard Young tableaux} of size $n$ with
$\maj(Q) \equiv 1 \bmod{n}$.  Here 
the {\it major index} $\maj(Q)$ is the sum of values $i=1,2,\ldots,n-1$
for which the entry $i$ appears in a higher row than $i+1$ in $Q$.
\end{itemize}

\end{remark}

\begin{ex}
As special cases of $\widehat{\WH}^i_n, \widehat{\Lie}^i_n$
from the introduction, one has
$$
\begin{aligned}
\widehat{\Lie}^{n-1}_n &=\Lie_{(n)},\\
\widehat{\WH}^{n-1}_n &=\epsilon_{S_n} \otimes \Lie_{(n)}.
\end{aligned}
$$
Thus for $n \leq 5$, their irreducible
expansions appear as the $(i,n)=(n-1,n)$ diagonal 
in Tables~\eqref{Lie-hat-table}, \eqref{Wiltshire-Gordon-table},
respectively. Multiplicities larger than one first appear in the
decomposition of $\Lie_{(n)}$ at $n=6$:  
$$
\Lie_{(6)}=
\chi^{(5, 1)} + 
\chi^{(4, 2)} +
2\chi^{(4, 1, 1)} +
\chi^{(3,3)} +
3 \chi^{(3, 2, 1)} +
\chi^{(3, 1, 1, 1)} +
2\chi^{(2, 2, 1, 1)} +
\chi^{(2, 1, 1, 1, 1)}.
$$
\end{ex}

\subsection{Posets, Whitney homology, and 
rank-selection}
\label{poset-subsection}

Good references for much of this material include Stanley \cite{Stanley-aspects},
Sundaram \cite{Sundaram}, and Wachs \cite{Wachs-Park-City}.
Given a finite partially ordered set ({\it poset}) $P$, 
the  {\it order complex} of $P$, denoted $\Delta (P)$, is the simplicial complex whose $i$-faces are  $(i+1)$-chains $p_0 < p_1 < \cdots < p_i$ of comparable poset elements.  We often consider the (reduced) simplicial homology $\tilde{H}_*(\Delta(P))=\tilde{H}_*(\Delta(P),\QQ)$ with coefficients in $\QQ$, regarded as
a representation for any group $G$ of 
poset automorphisms. Throughout we will make 
free use of the identification of (complex, finite-dimensional)
representations of a finite group
$G$ with their characters, and  the fact that when $G$ is the symmetric
group $S_n$, all such representations can be defined over $\QQ$.
In particular, we will use that 
the inner product $\langle \chi_1,\chi_2 \rangle_G:=\frac{1}{|G|} \chi_1(g^{-1}) \chi_2(g)$ of two characters $\chi_1,\chi_2$ of $G$-representations $V_1,V_2$ 
gives the dimension of their
intertwiner space ${\mathrm{Hom}}_G(V_1,V_2)$,
so that when $V_1$ is irreducible, $\langle \chi_1,\chi_2 \rangle_G$
is the multiplicity of $V_1$ within $V_2$.

Say that a finite poset $P$ is {\it graded} if all of its 
{\it maximal chains} (namely its totally ordered subsets which are maximal 
under inclusion) have the same length.  For $P$ a 
finite graded poset having unique minimum element $\hat{0}$,
let $\rank(x)$ be the length $\ell$
of all the maximal chains $\hat{0}=x_0 < x_1 < \cdots < x_{\ell}=x$ from $\hat{0}$ to $x$.
Denote by $\Delta(x,y)$ the order complex of the open interval
$(x,y):=\{z \in P: x < z < y\}$, so that $\dim \Delta(x,y)=\rank(y)-\rank(x)-2$.   When we speak of
the simplicial homology of a poset $P$ or of an interval $(u,v)$ in a poset $P$, we are always 
referring to the simplicial homology of its order complex.  
Say that 
$P$ is {\it Cohen-Macaulay} (over $\QQ$) if every interval $(x,y)$ in $P$ 
has 
$$
\tilde{H}_i(x,y)=0 \text{ for  }i < \rank(y)-\rank(x)-2. 
$$

It is known that the Cohen-Macaulay property is inherited when
passing to the rank-selected subposets 
$P^S:=\{ p \in P: \rank(p) \in S\}$ of a graded Cohen-Macaulay poset $P$ for any subset $S$ of possible ranks.
For a group of $G$ of automorphisms of $P$, let $\alpha_S(P)$  denote
the $G$-representation that is the permutation representation  
on the maximal chains in $P^S$ induced by the $G$-action on $P$.  Let 
$\beta_S(P)$ denote the virtual representation defined by 
\begin{equation} 
\label{beta-definition}
\beta_S(P)  = \sum_{T\subseteq S} (-1)^{|S-T|} \alpha_T(P).
\end{equation}
The Hopf trace formula implies  that $\beta_S(P)$ is also the virtual representation that is the alternating 
sum of $G$-representations on the homology groups of $P^S$.  When $P$ is 
a Cohen-Macaulay
poset, this second interpretation for  $\beta_S(P)$ implies that $\beta_S(P)$ is the actual 
$G$-representation on 
the top homology $\tilde{H}_{|S|-1}(P^S)$ since all the other terms comprising the 
virtual representation are 0.
By inclusion-exclusion, one also has
\begin{equation}
\label{alpha-as-sum-of-betas}
\alpha_S(P)= \sum_{T \subseteq S} \beta_T(P).
\end{equation}

\noindent
Sundaram observed the following
relation between the rank-selected homologies $\beta_S(P)$ 
for initial subsets $S=\{1,2,\ldots\}$ of ranks, 
and the {\it Whitney homology}, defined as follows:
$$
WH_i(P):= \bigoplus_{\substack{x \in P :\\ \rank(x)=i}} \tilde{H}_*(\hat{0},x).
$$

\begin{prop}(Sundaram \cite[Prop. 1.9]{Sundaram})
\label{rank-selected-from-Whitney}
For $P$ any finite Cohen-Macaulay graded poset with a bottom element,
one has 
$$
WH_i(P) = \beta_{\{1,2,\ldots,i\}}(P) + \beta_{\{1,2,\ldots,i-1\}}(P).
$$
\end{prop}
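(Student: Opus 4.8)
The plan is to establish the identity directly in the representation ring, using only the inclusion--exclusion formula \eqref{beta-definition} for $\beta_S$, the Hopf trace and Cohen--Macaulay facts recalled above, and one elementary equivariant decomposition of flags. Fix $i$, and fix a subset $T \subseteq \{1,\ldots,i\}$ with $i \in T$, so that $i = \max T$. The key observation is that a maximal chain of $P^T$ is the same data as a choice of its top element $x$ --- necessarily of rank $i$ --- together with a maximal chain of the rank-selected open interval $(\hat 0, x)^{\,T \setminus \{i\}}$. Since $G$ permutes the rank-$i$ elements and carries the flags below $x$ to those below $gx$, this gives an isomorphism of $G$-permutation modules
$$
\alpha_T(P) \;\cong\; \bigoplus_{x \,:\, \rank(x) = i} \alpha_{T \setminus \{i\}}\bigl((\hat 0, x)\bigr),
$$
where $\alpha_U\bigl((\hat 0, x)\bigr)$ denotes the permutation module on the maximal chains of $(\hat 0, x)^{\,U}$ and $G$ merely shuffles the summands.

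The next step is to substitute this into \eqref{beta-definition} with $S = \{1,\ldots,i\}$, splitting the alternating sum according to whether or not $i \in T$. Writing $T = T' \sqcup \{i\}$ in the terms with $i \in T$ and factoring out the sign, the two halves reassemble into
$$
\beta_{\{1,\ldots,i\}}(P) \;=\; \Bigl( \bigoplus_{x \,:\, \rank(x) = i} \beta_{\{1,\ldots,i-1\}}\bigl((\hat 0, x)\bigr) \Bigr) \;-\; \beta_{\{1,\ldots,i-1\}}(P).
$$
This is where Cohen--Macaulayness enters: for $\rank(x) = i$ the order complex $\Delta(\hat 0, x)$ has dimension $i-2$ and, by the Cohen--Macaulay hypothesis applied to the interval $(\hat 0, x)$ of $P$, all its reduced homology sits in that top degree, whence $\beta_{\{1,\ldots,i-1\}}\bigl((\hat 0, x)\bigr) = \tilde H_{i-2}(\hat 0, x) = \tilde H_*(\hat 0, x)$; summing over the rank-$i$ elements $x$ is exactly the definition of $WH_i(P)$. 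Rearranging the displayed equation gives $WH_i(P) = \beta_{\{1,\ldots,i\}}(P) + \beta_{\{1,\ldots,i-1\}}(P)$ as virtual characters, and since all three terms are genuine $G$-representations (the two $\beta$'s by Cohen--Macaulayness), this is an honest isomorphism of $G$-modules.

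An equivalent, more topological route produces the short exact sequence $0 \to \beta_{\{1,\ldots,i\}}(P) \to WH_i(P) \to \beta_{\{1,\ldots,i-1\}}(P) \to 0$ from the long exact homology sequence of the pair $\bigl(\Delta(P^{\{1,\ldots,i\}}), \Delta(P^{\{1,\ldots,i-1\}})\bigr)$: the deleted rank-$i$ vertices form an antichain, so their open stars are disjoint with links $\Delta(\hat 0, x)$, and excision identifies the relative homology in each degree $k$ with $\bigoplus_{\rank(x)=i}\tilde H_{k-1}(\hat 0, x)$; Cohen--Macaulayness then forces all three complexes to have homology in a single degree, collapsing the long exact sequence to the stated four terms, which split by semisimplicity. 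I expect the only real care needed is in the sign bookkeeping of the split inclusion--exclusion, in checking that every identification is $G$-equivariant (it is, because $G$ acts compatibly on the rank-$i$ elements and on the flags below each), and in the degenerate conventions at $i=1$ (empty open intervals, $\alpha_\emptyset(P)$ the trivial module), where the assertion reduces to $\alpha_{\{1\}}(P) = \beta_{\{1\}}(P) + \beta_\emptyset(P)$, a special case of \eqref{alpha-as-sum-of-betas}. None of these is a genuine obstacle; all the content sits in the one-line equivariant flag decomposition and the Cohen--Macaulay concentration of homology.
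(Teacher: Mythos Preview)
Your argument is correct. Note, however, that the paper does not actually supply a proof of this proposition: it is quoted verbatim from Sundaram \cite[Prop.~1.9]{Sundaram} and used as a black box, so there is no ``paper's own proof'' to compare against. What you have written is essentially the standard proof (and is the argument Sundaram gives): decompose the flag representation $\alpha_T(P)$ for $i\in T$ over the choice of top element $x$ of rank $i$, feed this into the inclusion--exclusion defining $\beta_{\{1,\ldots,i\}}$, and invoke Cohen--Macaulayness of each interval $(\hat 0,x)$ to identify $\beta_{\{1,\ldots,i-1\}}\bigl((\hat 0,x)\bigr)$ with $\tilde H_{i-2}(\hat 0,x)$. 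Your sign bookkeeping in the split of $\sum_{T\subseteq\{1,\ldots,i\}}(-1)^{i-|T|}\alpha_T$ is correct, and the equivariance is handled properly since $G$ permutes the rank-$i$ elements and carries lower intervals to lower intervals. The alternative topological route you sketch via the long exact sequence of the pair is also valid and yields the same conclusion.
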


\subsection{The lattice of set partitions}
\label{set-partition-review-subsection}

A {\it set partition} $\pi=\{B_1,\ldots,B_\ell\}$ of $\{1,2,\ldots,n\}$
is a disjoint decomposition $\{1,2,\ldots,n\}=\bigsqcup_{i=1}^\ell B_i$
into sets $B_i$ called the {\it blocks} of the partition.
The set $\Pi_n$ of all such partitions 
is ordered by refinement: $\pi \leq \sigma$ if every block of
$\sigma$ is a union of blocks of $\pi$.  This partial order
gives a well-studied ranked lattice, in which the
unique minimum and maximum elements $\hat{0},\hat{1}$ of $\Pi_n$
are the partitions with $n$ blocks and $1$ block, respectively.
The rank of a set partition $\pi$ turns out to
be the same as the rank of the 
number partition $\lambda=1^{m_1} 2^{m_2} \cdots$ of $n=|\lambda|$ 
giving its block sizes, that is, $m_i$ is the number
of blocks of size $i$:
\begin{equation}
\label{set-partition-rank-formula}
\rank(\pi) = \rank(\lambda) = \sum_{i \geq 1} m_i(i-1) = |\lambda|-\ell(\lambda).
\end{equation}

It is well-known that $\Pi_n$ is Cohen-Macaulay of rank $n-1$, and 
therefore the open interval $(\hat{0},\hat{1})$ has only top homology
$\tilde{H}_{n-3}(\Pi_n) $ nonzero. 
Stanley \cite{Stanley} described its $S_n$-representation.

\begin{thm}{\cite[Thm. 7.3]{Stanley}}
\label{induced-expression}
For $n \geq 1$, the homology $S_n$-representation
$\tilde{H}_{n-3}(\Pi_n)$ is
$$
\epsilon_{S_n} \otimes \Lie_{(n)}
  = \epsilon_{S_n} \otimes \chi_\zeta \uparrow_{C_n}^{S_n}.
$$
\end{thm}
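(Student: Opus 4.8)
The plan is to compute the left‑hand side by reducing it, via the Cohen--Macaulayness of $\Pi_n$, to an equivariant reduced Euler characteristic, and then to evaluate that invariant using the self‑similar (``exponential‑formula'') structure of the set‑partition lattice. Since $\Pi_n$ is Cohen--Macaulay of rank $n-1$, the reduced homology of the open interval $(\hat{0},\hat{1})$ vanishes outside of degree $n-3$; hence the equivariant Möbius invariant
$$
\bar\mu_{\Pi_n}:=\sum_{j\geq -1}(-1)^j\bigl[\tilde H_j(\hat{0},\hat{1})\bigr]=\sum_{j\geq -1}(-1)^j\bigl[\mathbf C_j\bigr],
$$
where $\mathbf C_j$ is the $S_n$‑permutation module on chains of $j+1$ set partitions strictly between $\hat0$ and $\hat1$ and $\mathbf C_{-1}:=\one_{S_n}$, satisfies $\bar\mu_{\Pi_n}=(-1)^{n-3}\bigl[\tilde H_{n-3}(\Pi_n)\bigr]=(-1)^{n-1}\bigl[\tilde H_{n-3}(\Pi_n)\bigr]$ as virtual $S_n$‑representations. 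Thus it suffices to prove $\ch(\bar\mu_{\Pi_n})=(-1)^{n-1}\pi_n$, with the usual conventions handling $n=1,2$; since $\pi_n=\ch(\epsilon_{S_n}\otimes\Lie_{(n)})$, this yields the theorem upon applying $\ch$ and using its injectivity.

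To compute $\ch(\bar\mu_{\Pi_n})$ I would assemble these quantities for all $n$ into one generating‑function identity in $\Lambda=\bigoplus_m\Lambda_m$. The structural input is that for $\sigma\in\Pi_n$ the lower interval $[\hat0,\sigma]$ is the direct product $\prod_B\Pi_{|B|}$ over the blocks $B$ of $\sigma$, with $\bar\mu_{[\hat0,\sigma]}=\bigotimes_B\bar\mu_{\Pi_{|B|}}$, and that the $S_n$‑stabilizer of $\sigma$ acts through $\prod_i S_i\wr S_{m_i}$ when $\sigma$ has $m_i$ blocks of size $i$. Summing $\bar\mu_{[\hat0,\sigma]}$ over all $\sigma\in\Pi_n$, grouping by block type, and using multiplicativity of $\ch$ together with the exponential identity \eqref{h-plethysm} — which makes $m_i$ indistinguishable size‑$i$ blocks contribute through the symmetric plethysm $h_{m_i}[\,\cdot\,]$ — yields
$$
\sum_{n\geq 0}\ \sum_{\sigma\in\Pi_n}\ch\bigl(\bar\mu_{[\hat0,\sigma]}\bigr)=\prod_{i\geq 1}\Bigl(\sum_{m\geq 0}h_m\Bigr)\bigl[\ch(\bar\mu_{\Pi_i})\bigr].
$$
On the other hand the inner sum $\sum_{\sigma\in\Pi_n}\bar\mu_{[\hat0,\sigma]}$ is the equivariant version of $\sum_\sigma\mu_{\Pi_n}(\hat0,\sigma)$ and vanishes for $n\geq 2$: the order complex $\Delta(\Pi_n\setminus\{\hat0\})$ is an $S_n$‑equivariant cone on $\hat1$, hence $S_n$‑acyclic, and sorting its chains by their maximum identifies its reduced Lefschetz character with $-\one_{S_n}-\sum_{\sigma\neq\hat0}\bar\mu_{[\hat0,\sigma]}$. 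So the left‑hand side above equals $1+p_1$, and writing $w_i:=\ch(\bar\mu_{\Pi_i})$ we obtain the recursion $\prod_{i\geq 1}\bigl(\sum_{m\geq 0}h_m\bigr)[w_i]=1+p_1$, which together with $w_1=p_1$ determines every $w_i$ uniquely, degree by degree (the degree‑$n$ part isolates $w_n$ plus a polynomial in $w_1,\dots,w_{n-1}$).

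It then remains to check that $(-1)^{i-1}\pi_i$ satisfies the same identity, i.e. $\prod_{i\geq 1}\bigl(\sum_{m\geq 0}h_m\bigr)[(-1)^{i-1}\pi_i]=1+p_1$; uniqueness then forces $w_i=(-1)^{i-1}\pi_i$, which is exactly what we want. This is a routine Möbius‑inversion computation: specializing $u=1$ in \eqref{h-to-p-identity} gives $\sum_{m\geq 0}h_m=\exp\bigl(\sum_{k\geq 1}p_k/k\bigr)$, and each Adams operation $p_k[\,\cdot\,]$ is additive, so by \eqref{h-plethysm} the claim is equivalent to $\sum_{k\geq 1}\tfrac1k\,p_k\bigl[\sum_{i\geq 1}(-1)^{i-1}\pi_i\bigr]=\log(1+p_1)$; from the Witt formula $\ell_i=\tfrac1i\sum_{d\mid i}\mu(d)p_d^{\,i/d}$ (see Remark~\ref{necklace-and-maj-remark}) and $\pi_i=\omega(\ell_i)$ one gets $\sum_{i\geq 1}(-1)^{i-1}\pi_i=\sum_{d\geq 1}\tfrac{\mu(d)}{d}\log(1+p_d)$, after which $p_k[p_d^{\,m}]=p_{kd}^{\,m}$ and $\sum_{d\mid n}\mu(d)=\delta_{n,1}$ collapse the double sum to $\log(1+p_1)$. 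I expect the only real subtlety to lie in the second paragraph — pinning down the equivariant exponential formula (that size‑$i$ blocks enter precisely through $h_{m_i}[\ch\bar\mu_{\Pi_i}]$ under the correct wreath‑product action, and that $\bar\mu$ is multiplicative for products) and the $S_n$‑acyclicity that produces the vanishing; the rest is bookkeeping with signs and small‑$n$ conventions. A less self‑contained alternative would quote the identification of $\tilde H_*(\Pi_n)$, up to the twist by $\epsilon_{S_n}$, with the arity‑$n$ part of the Lie operad via Koszul duality of the commutative and Lie operads, or the Orlik--Solomon/Folkman description of $\tilde H_*(\Pi_n)$ combined with the computations of Arnol'd and of Lehrer--Solomon; the argument sketched here stays within the symmetric‑function tools already set up.
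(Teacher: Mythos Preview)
The paper does not prove this theorem at all; it is quoted as Stanley's result with a bare citation and used as background.  So there is no ``paper's proof'' to compare against, and the relevant question is whether your argument stands on its own.

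It does.  Your strategy --- pass to the equivariant reduced Euler characteristic via Cohen--Macaulayness, encode the self-similarity of $\Pi_n$ as the plethystic identity $H\bigl[\sum_{i\ge 1} w_i\bigr]=1+p_1$, and then verify that $w_i=(-1)^{i-1}\pi_i$ solves it using the Witt formula for $\ell_i$ and M\"obius inversion --- is one of the standard routes to Stanley's theorem (closely related to the arguments of Hanlon, Joyal, and Sundaram, and to the species identity $\mathrm{Set}\circ\mathrm{Lie}=\mathrm{Perm}$).  The steps you flag as delicate are indeed the right ones, and they go through: the equivariant cone on the $S_n$-fixed point $\hat 1$ really gives $S_n$-acyclicity of $\Delta(\Pi_n\setminus\{\hat 0\})$, and sorting chains by their maximum yields $\sum_{\sigma\in\Pi_n}\bar\mu_{[\hat 0,\sigma]}=0$ for $n\ge 2$ exactly as you wrote.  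The plethystic bookkeeping in the final paragraph is correct; one small check you might make explicit is that $(-1)^{i-1}\pi_i = -\tfrac{1}{i}\sum_{d\mid i}\mu(d)(-p_d)^{i/d}$, from which the double sum collapses cleanly.

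The alternative routes you mention at the end (Koszul duality of $\mathrm{Com}$ and $\mathrm{Lie}$, or the Orlik--Solomon/Lehrer--Solomon description) are genuine alternatives and arguably closer to how the result is usually packaged nowadays, but your symmetric-function argument is self-contained within the paper's toolkit, which is a virtue here.
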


\noindent
More generally, Lehrer and Solomon
\cite{LehrerSolomon} described its 
Whitney homology of $\Pi_n$, as follows;
see also Sundaram \cite[Theorem 1.8]{Sundaram}.  

\begin{thm}
\label{Lehrer-Solomon-theorem}
For a partition $\lambda=1^{m_1} 2^{m_2} \cdots$ of $n$,
the $S_n$-representation
$$
\bigoplus_{\substack{\pi \in \Pi_n\text{ with}\\\text{block sizes }\lambda}} \tilde{H}_*(\hat{0},\pi)
$$
is isomorphic to $\WH_\lambda$ described earlier, with
Frobenius characteristic given by \eqref{equivariant-Whitney-homology-expression}.
\end{thm}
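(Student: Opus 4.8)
The plan is to carry out the classical poset-theoretic computation of the Whitney homology of $\Pi_n$ one block-size stratum at a time, reducing everything to Stanley's determination of $\tilde{H}_{n-3}(\Pi_n)$ (Theorem~\ref{induced-expression}) and to the wreath-product description of plethysm recalled in Section~\ref{symmetric-function-review-section}; one would cite Lehrer--Solomon \cite{LehrerSolomon} and Sundaram \cite{Sundaram} for the sign details. Fix $\lambda=1^{m_1}2^{m_2}\cdots$ a partition of $n$, and fix a set partition $\pi_0\in\Pi_n$ with block sizes $\lambda$ and blocks $B_1,\dots,B_\ell$. The group $S_n$ acts transitively on the set of all set partitions with block sizes $\lambda$, and the stabilizer of $\pi_0$ is the subgroup
$$
\Stab(\pi_0)\;\cong\;\prod_{j\ge 1} S_{m_j}[S_j]
$$
naturally embedded in $S_n$; hence
$$
\bigoplus_{\substack{\pi\in\Pi_n\text{ with}\\\text{block sizes }\lambda}}\tilde{H}_*(\hat{0},\pi)
\;\cong\;
\left(\tilde{H}_*(\hat{0},\pi_0)\right)\uparrow_{\Stab(\pi_0)}^{S_n},
$$
and it remains to identify $\tilde{H}_*(\hat{0},\pi_0)$ as a $\Stab(\pi_0)$-module and apply $\ch$.

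I would next use the product structure of lower intervals in $\Pi_n$: one has $[\hat{0},\pi_0]\cong\prod_s\Pi_{|B_s|}$, in which the size-$1$ blocks contribute trivial one-point factors. Each $\Pi_m$ is Cohen--Macaulay with homology concentrated in top degree, and the order complex of the proper part of a product of bounded posets is homotopy equivalent to an iterated suspension of the join of the order complexes of the proper parts of the factors. Hence $\tilde{H}_*(\hat{0},\pi_0)$ is again concentrated in a single degree and, as a vector space, equals $\bigotimes_s\tilde{H}_{|B_s|-3}(\Pi_{|B_s|})$ (with each singleton block contributing $\QQ$). As a $\Stab(\pi_0)$-module, the base group $\prod_j S_j^{m_j}$ acts factorwise, while for each $j$ the factor $S_{m_j}$ permutes the $m_j$ tensor slots attached to the size-$j$ blocks, incurring the Koszul sign $(-1)^{(j-3)^2}=(-1)^{j-1}$ for each transposition of two such slots; thus there is no extra sign when $j$ is odd and the sign character of $S_{m_j}$ when $j$ is even.

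Now apply Stanley's Theorem~\ref{induced-expression}: as an $S_j$-module $\tilde{H}_{j-3}(\Pi_j)\cong\epsilon_{S_j}\otimes\Lie_{(j)}$, so $\ch\,\tilde{H}_{j-3}(\Pi_j)=\omega(\ell_j)=\pi_j$. Applying $\ch$ to the induced module above, and using the plethysm dictionary of Section~\ref{symmetric-function-review-section} -- inducing $U^{\otimes m}$ from $S_m[S_j]$ up to $S_{jm}$ produces Frobenius image $h_m[\ch U]$, while inducing the sign-twisted module $\epsilon_{S_m}\otimes U^{\otimes m}$ produces $e_m[\ch U]$ -- together with multiplicativity of $\ch$ over the factors indexed by distinct block sizes $j$, gives
$$
\ch\!\left(\bigoplus_{\substack{\pi\in\Pi_n\text{ with}\\\text{block sizes }\lambda}}\tilde{H}_*(\hat{0},\pi)\right)
=\prod_{\text{odd }j\ge 1}h_{m_j}[\pi_j]\ \prod_{\text{even }j\ge 2}e_{m_j}[\pi_j],
$$
which is precisely \eqref{equivariant-Whitney-homology-expression}; here $\pi_1=h_1$, so the singleton blocks contribute the correct factor $h_{m_1}$. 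Thus the representation in question is $\WH_\lambda$, as claimed.

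The one genuinely delicate step is the equivariant sign analysis of the middle paragraph: computing the homology of the order complex of a product of bounded Cohen--Macaulay posets as a module over its full automorphism group -- in particular over the subgroup permuting isomorphic factors -- and verifying that the resulting Koszul signs reproduce the $h$-versus-$e$ dichotomy of \eqref{equivariant-Whitney-homology-expression}. The remaining inputs (transitivity of the $S_n$-action on block-size strata, the product decomposition of intervals in $\Pi_n$, multiplicativity of $\ch$, and Stanley's Theorem~\ref{induced-expression}) are all standard or already established in the excerpt.
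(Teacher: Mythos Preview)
The paper does not give its own proof of this theorem: it is stated with attribution to Lehrer--Solomon \cite{LehrerSolomon} and Sundaram \cite[Theorem~1.8]{Sundaram}, and used as a black box thereafter. Your sketch is precisely the classical argument found in those references --- transitivity on the stratum of block type $\lambda$, identification of the stabilizer as $\prod_j S_{m_j}[S_j]$, product decomposition of the lower interval, Stanley's computation of $\tilde{H}_{j-3}(\Pi_j)$, and the Koszul sign under permutation of isomorphic factors --- so there is nothing to compare.

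Your sign bookkeeping is correct in its conclusion: swapping two tensor factors each sitting in homological degree $j-3$ contributes $(-1)^{j-3}=(-1)^{j-1}$, yielding $h_{m_j}[\pi_j]$ for $j$ odd and $e_{m_j}[\pi_j]$ for $j$ even. The only caution is that the phrase ``iterated suspension of the join'' hides exactly the place where one must be careful: the identification $\Delta\bigl(\overline{P_1\times\cdots\times P_k}\bigr)\simeq \mathrm{susp}\bigl(\Delta(\bar P_1)*\cdots*\Delta(\bar P_k)\bigr)$ must be made $S_{m_j}$-equivariantly, and it is this equivariance (not merely the Koszul rule on a tensor product of graded vector spaces) that produces the sign. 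You flag this yourself, and the references you cite contain the full verification, so the proposal is sound.
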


\subsection{Higher Lie characters}
\label{higher-Lie-section}

The homology $S_n$-representation $\pi_n$ for the partition lattice
is well-known (from work
by Witt, by Brandt, by Thrall, and by Klyachko) to have a close relation
with the theory of {\it free Lie algebras}, and higher Lie characters.   We review this connection
here, drawing on expositions of 
Gessel and Reutenauer \cite{GesselReutenauer}, 
Reutenauer \cite[Chap. 8]{Reutenauer}, 
Sundaram \cite[Intro.]{Sundaram}, 
Schocker \cite{Schocker}, 
and Stanley \cite[Exer. 7.89]{Stanley}.

For $V=\CC^n$,
the {\it tensor algebra $T(V):=\oplus_{d \geq 0} T^d(V)$} where 
$T^d(V):=V^{\otimes d}$ may be considered the {\it free associative
algebra} on $n$ generators $e_1,\ldots,e_n$ forming a $\CC$-basis for $V$.
It is also the {\it universal enveloping algebra} 
$T(V) = \UUU(\LLL(V))$ for the {\it free Lie algebra}
$\LLL(V)$, which is the $\CC$-subspace spanned by all brackets
$
[x,y]:=xy -yx
$
of elements $x,y$ in $T(V)$. 
The $GL(V)$-action on $V$ extends to an action
on $T(V)$, preserving $\LLL(V)$, and respecting
the graded $\CC$-vector space decomposition
$
\LLL(V)=\bigoplus_{j \geq 0}\LLL^j(V)
$
in which 
$$
\LLL^0(V):=\CC, \quad
\LLL^1(V):=V, \quad
\LLL^2(V):=[V,V],
$$
and $\LLL^j(V)$ 
is the $\CC$-span of all Lie monomials bracketing $j$ elements of $V$.
Denote by $S(U)$ the {\it symmetric algebra} of
a $\CC$-vector space $U$, that is, 
$S(U):=\oplus_{d \geq 0} S^d(U)$,  where $S^d(U)$ is the
$d^{th}$ symmetric power.  Then  the {\it Poincar\'e-Birkhoff-Witt} vector
space isomorphism $\UUU(L) \cong S(L)$ for a Lie algebra $L$ here 
provides a $GL(V)$-equivariant isomorphism and decomposition
$$
\begin{aligned}
T(V) =\UUU(\LLL(V))
 \cong S\left( \LLL(V) \right)
 = S\left( \bigoplus_{j \geq 0} \LLL^j(V) \right) 
  \cong \bigoplus_{(m_1,m_2,\ldots) \geq 0}
        \underbrace{S^{m_1}\LLL^1(V) \otimes 
          S^{m_2}\LLL^2(V) \otimes 
              \cdots}_{\LLL_\lambda(V):=}
\end{aligned}
$$

\begin{defn}
The $GL(V)$-representation $\LLL_\lambda(V)$ is
the {\it higher Lie representation} for $\lambda$.
\end{defn}

\begin{thm}
Letting $n:=|\lambda|$, so that $V=\CC^n$,
the higher Lie representation $\LLL_\lambda(V)$
is {\it Schur-Weyl dual} to 
the $S_n$-representation $\Lie_\lambda$
from Definition~\ref{higher-Lie-and-Whitney-defn}:
$\Lie_\lambda$ is $S_n$-isomorphic to the {\it multilinear component} 
or {\it $1^n$-weight space} in $\LLL_\lambda(V)$, the subspace on which a matrix in $GL(V)$ 
having eigenvalues $x_1,\ldots,x_n$ acts via the scalar $x_1 \cdots x_n$.  
\end{thm}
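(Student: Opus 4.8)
The plan is to identify the $GL(V)$-character of $\LLL_\lambda(V)$, as a symmetric function, with $\ch(\Lie_\lambda)$, and then transfer this identity to the multilinear (weight-$1^n$) components via Schur--Weyl duality. Write $\lambda = 1^{m_1}2^{m_2}\cdots$. For a polynomial $GL(V)$-representation $U$ occurring inside some tensor power $V^{\otimes n}$, let $\operatorname{char}_{GL}(U) \in \Lambda_n$ denote its character: the symmetric function (stable as $\dim V \to \infty$) recording eigenvalue multiplicities, normalized so that $\operatorname{char}_{GL}(S^\mu V) = s_\mu$ for the Schur functor $S^\mu$. I would use two routine properties of this assignment: it is multiplicative on tensor products, and $\operatorname{char}_{GL}(S^m W) = h_m\bigl[\operatorname{char}_{GL}(W)\bigr]$ --- the latter being essentially the definition of plethysm by $h_m = s_{(m)}$ through the symmetric-power Schur functor.

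Starting from the classical computation $\operatorname{char}_{GL}(\LLL^j(V)) = \ell_j$ --- that is, the degree-$j$ piece of the free Lie algebra is, as a $GL(V)$-module, Schur--Weyl dual to $\Lie_{(j)}$ (work of Witt, Brandt, Thrall, Klyachko; see Reutenauer \cite{Reutenauer} or Gessel--Reutenauer \cite{GesselReutenauer}, and compare the formula $\ell_j = \frac1j\sum_{d\mid j}\mu(d)p_d^{j/d}$ in Remark~\ref{necklace-and-maj-remark}; note $\ell_1 = h_1$ matches $\LLL^1(V) = V$) --- one feeds it into the $GL(V)$-equivariant Poincar\'e--Birkhoff--Witt decomposition
$$
V^{\otimes n} \;\cong\; \bigoplus_{|\lambda|=n} \LLL_\lambda(V), \qquad \LLL_\lambda(V) = \bigotimes_{j\ge 1} S^{m_j}\!\bigl(\LLL^j(V)\bigr)
$$
recalled just before the statement. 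The two properties of $\operatorname{char}_{GL}$ then give
$$
\operatorname{char}_{GL}\bigl(\LLL_\lambda(V)\bigr) \;=\; \prod_{j\ge 1} h_{m_j}\!\bigl[\operatorname{char}_{GL}(\LLL^j(V))\bigr] \;=\; \prod_{j\ge 1} h_{m_j}[\ell_j] \;=\; \ch(\Lie_\lambda),
$$
the last equality being Definition~\ref{higher-Lie-and-Whitney-defn}, formula \eqref{Lie-character-expression}.

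It remains to pass from the $GL(V)$-character to the $1^n$-weight space. Here I would invoke Schur--Weyl duality in the following shape: for a $GL(V)$-submodule $U \subseteq V^{\otimes n}$ with $\dim V \ge n$ (the case $V = \CC^n$ included), the weight-$1^n$ space $U_{1^n}$ is preserved by the permutation matrices inside $GL(V)$ --- these preserve every $GL(V)$-submodule, and on $V^{\otimes n}$ they act precisely by relabelling the $n$ basis vectors --- so $U_{1^n}$ is an $S_n$-module, and with this action $\ch_{S_n}(U_{1^n}) = \operatorname{char}_{GL}(U)$; equivalently, a $GL(V)$-isomorphism $U \cong \bigoplus_\mu (S^\mu V)^{\oplus c_\mu}$ forces $U_{1^n} \cong \bigoplus_\mu (V^\mu)^{\oplus c_\mu}$ as $S_n$-modules, using $(S^\mu V)_{1^n} \cong V^\mu$. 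Applying this with $U = \LLL_\lambda(V)$ and combining with the identity of the previous paragraph gives $\LLL_\lambda(V)_{1^n} \cong \Lie_\lambda$ as $S_n$-representations, which is the claim.

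The routine ingredients are the multiplicativity and plethystic behaviour of $\operatorname{char}_{GL}$ (standard properties of Schur functors) and the $GL(V)$-equivariance of the PBW isomorphism (the canonical symmetrization map $S(\LLL(V)) \xrightarrow{\sim} T(V)$ intertwines the $GL(V)$-actions, since $GL(V)$ acts by Lie-algebra automorphisms of $\LLL(V)$). The one input carrying real content is the classical identification $\operatorname{char}_{GL}(\LLL^j(V)) = \ell_j$, which I would cite. Beyond that, the point most in need of care is that the multilinear component $(V^{\otimes n})_{1^n} \cong \CC S_n$ carries two commuting $S_n$-actions, and the relevant one here is the relabelling (permutation-matrix) action --- not the tensor-slot action --- since $\LLL_\lambda(V)$ is in general not stable under the latter.
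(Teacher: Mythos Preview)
The paper does not supply its own proof of this theorem: it appears in the review Section~\ref{higher-Lie-section} as a statement of classical background, attributed to Witt, Brandt, Thrall and Klyachko and accompanied only by pointers to expositions (Reutenauer, Gessel--Reutenauer, Sundaram, Schocker, Stanley). The sentence following the theorem (``Equivalently, the trace of this same diagonal matrix\dots'') is a restatement, not an argument.

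Your sketch is a correct account of the standard proof. The two substantive steps --- the identification $\operatorname{char}_{GL}(\LLL^j(V)) = \ell_j$ and the fact that the $1^n$-weight space of $S^\mu V$ carries the irreducible $S_n$-module $V^\mu$ under the permutation-matrix action --- are exactly the ingredients one cites, and you have correctly flagged the first as the content-bearing input. The plethystic identity $\operatorname{char}_{GL}(S^m W) = h_m[\operatorname{char}_{GL}(W)]$ and the multiplicativity on tensor products then assemble these into $\operatorname{char}_{GL}(\LLL_\lambda(V)) = \prod_j h_{m_j}[\ell_j] = \ch(\Lie_\lambda)$, matching \eqref{Lie-character-expression}. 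Your closing remark distinguishing the permutation-matrix $S_n$-action from the tensor-slot action is apt: since $\LLL_\lambda(V)$ is only a $GL(V)$-summand of $V^{\otimes n}$ (via the $GL(V)$-equivariant PBW isomorphism) and not in general stable under place permutation, it is the former action that equips $(\LLL_\lambda(V))_{1^n}$ with its $S_n$-structure.
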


Equivalently, the trace of this same diagonal matrix acting on $\LLL_\lambda(V)$ 
can be obtained from the symmetric function $\ch(\Lie_\lambda)$
in $x_1,x_2,\ldots$ by setting $x_{n+1}=x_{n+2}= \cdots =0.$

\subsection{Product generating functions}

The formulas \eqref{Lie-character-expression}, 
\eqref{equivariant-Whitney-homology-expression} have
the following product generating function reformulations
that we will find useful.  They appear
in work of Sundaram \cite[p. 249]{Sundaram},
\cite[Lemma 3.12]{Sundaram-Jerusalem},
of Hanlon\footnote{There are small 
sign typos which need  to be corrected in \cite[Eqn. (8.1)]{HanlonHodge} to
accord with \eqref{Whitney-homology-product-generating-function}.}  
\cite[Eqn. (8.1)]{HanlonHodge}, and of
Calderbank, Hanlon and Robinson \cite[Cor. 4.4]{CalderbankHanlonRobinson} 
(see also Getzler \cite[Thm. 4.5]{Getzler} for subsequent results 
in greater generality). 
To state them,  we first introduce for $\ell \leq 1$ the M\"obius function sum
\begin{equation}
\label{mobius-gf}
a_{\ell}(u):={\displaystyle\frac{1}{\ell} 
             \sum_{d | \ell} \mu(d) u^{\frac{\ell}{d}}}. 
\end{equation}


\begin{thm}
\label{product-generating-functions}
In $\Lambda[[u]]$, one has the product formulas
\begin{align}
L(u) &:= \displaystyle\sum_\lambda \ch(\Lie_\lambda) u^{\ell(\lambda)}
    &=&\displaystyle\sum_{n,i \geq 0} \ch(\Lie^i_n) u^{n-i} 
   &=& \displaystyle\prod_{\ell \geq 1} 
   \left( 
     1 - p_\ell 
   \right)^{-a_\ell(u)} 
\label{Lie-product-generating-function} \\
\displaystyle
W(u) &:= \displaystyle\sum_\lambda \ch(\WH_\lambda) u^{\ell(\lambda)}
     &=& \displaystyle\sum_{n,i \geq 0} \ch(\WH_i(\Pi_n)) u^{n-i} 
&=& \displaystyle\prod_{\ell \geq 1} 
   \left( 
     1 + (-1)^\ell p_\ell 
   \right)^{a_\ell(-u)}. 
\label{Whitney-homology-product-generating-function}
\end{align}
\end{thm}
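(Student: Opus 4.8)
The plan is to prove the two product formulas by combining the plethystic definitions~\eqref{Lie-character-expression} and~\eqref{equivariant-Whitney-homology-expression} with the exponential-type identities~\eqref{h-to-p-identity}, \eqref{e-to-p-identity} and the classical Cyclotomic (Witt) identity. First I would verify the middle equalities in each line, which are just bookkeeping: grouping the sum $\sum_\lambda$ over number partitions by the statistic $\ell(\lambda)$ (the number of parts) and using $\rank(\lambda)=|\lambda|-\ell(\lambda)$ from~\eqref{set-partition-rank-formula}, so that $u^{\ell(\lambda)}$ records $u^{n-i}$ when $\lambda$ contributes to $\Lie^i_n$ or $WH_i(\Pi_n)$; the definitions of $\Lie^j_n$ and $WH_j(\Pi_n)$ in Theorem~\ref{Sundaram-Welker-thm} as $\bigoplus_\lambda$ over $\rank(\lambda)=j$ then make these identifications immediate.

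The heart of the argument is the rightmost equality. Using that $\ch(\Lie_\lambda)=\prod_{j\ge1}h_{m_j}[\ell_j]$ for $\lambda=1^{m_1}2^{m_2}\cdots$ and that $\ell(\lambda)=\sum_j m_j$, one factors the generating function over the part sizes $j$:
\begin{equation*}
L(u)=\sum_\lambda \ch(\Lie_\lambda)\,u^{\ell(\lambda)}
   =\prod_{j\ge1}\left(\sum_{m\ge0} h_m[\ell_j]\,u^m\right)
   =\prod_{j\ge1} H[\ell_j](u),
\end{equation*}
where $H[\ell_j](u):=\sum_{m\ge0}h_m[\ell_j]u^m$. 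Applying the plethystic substitution into~\eqref{h-to-p-identity} gives $\sum_m h_m[\ell_j]u^m=\exp\left(\sum_{r\ge1}\frac{p_r[\ell_j]\,u^r}{r}\right)$, and since $\ell_j$ is a polynomial in the power sums one has $p_r[\ell_j]=\ell_j(p_r,p_{2r},\ldots)=\frac1j\sum_{d\mid j}\mu(d)p_{rj/d}$ from the first formula in Remark~\ref{necklace-and-maj-remark}. Substituting and interchanging the (absolutely convergent, degree-graded) sums, the exponent becomes $\sum_{\ell\ge1}(-\log(1-p_\ell))\,a_\ell(u)$ after collecting terms by $\ell:=rj/d$ and recognizing $a_\ell(u)=\frac1\ell\sum_{d\mid\ell}\mu(d)u^{\ell/d}$; exponentiating yields $\prod_\ell(1-p_\ell)^{-a_\ell(u)}$. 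The Whitney case is parallel: one factors $W(u)=\prod_{\text{odd }j}H[\pi_j](u)\cdot\prod_{\text{even }j}E[\pi_j](u)$, uses $\pi_j=\omega(\ell_j)$ so that $p_r[\pi_j]=(-1)^{r+1}\,(\text{something})$ — more precisely $\omega$ sends $p_m\mapsto(-1)^{m-1}p_m$, so $p_r[\pi_j]=\frac1j\sum_{d\mid j}\mu(d)(-1)^{rj/d-1}p_{rj/d}$ when $j$ is odd, with the elementary-symmetric-function identity~\eqref{e-to-p-identity} handling the even $j$ — and then collects signs to obtain $\prod_\ell(1+(-1)^\ell p_\ell)^{a_\ell(-u)}$.

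The main obstacle I anticipate is the sign-bookkeeping in the Whitney formula: one must carefully track the interaction between (i) the $\omega$-twist $\pi_j=\omega(\ell_j)$ which introduces $(-1)^{m-1}$ on each $p_m$, (ii) the switch from $h_{m_j}$ to $e_{m_j}$ on even parts, which via~\eqref{e-to-p-identity} introduces a $(-u)^r$ in place of $u^r$ and an overall sign, and (iii) the parity of the index $\ell=rj/d$ appearing in the Möbius sum. Getting all three to conspire into the clean factor $(1+(-1)^\ell p_\ell)^{a_\ell(-u)}$ requires splitting the product over $j$ by parity and checking that the even-$j$ and odd-$j$ contributions to a fixed $\ell$ combine correctly; this is where Hanlon's sign typo (noted in the footnote) arose. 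Everything else is a routine manipulation of formal power series in $\Lambda[[u]]$, justified degree-by-degree. As an alternative (or sanity check), one could instead derive~\eqref{Lie-product-generating-function} by citing the cited references \cite{Sundaram}, \cite{HanlonHodge}, \cite{CalderbankHanlonRobinson} and merely reconcile conventions; but the self-contained plethystic derivation above is short enough to include in full.
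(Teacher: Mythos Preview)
The paper does not actually prove Theorem~\ref{product-generating-functions}; it merely cites Sundaram, Hanlon, and Calderbank--Hanlon--Robinson for the formulas. Your self-contained plethystic derivation is therefore strictly more than what the paper supplies, and the overall strategy---factor $L(u)$ over part sizes $j$, apply the plethystic version of \eqref{h-to-p-identity}, then resum---is correct and standard.

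There is, however, a genuine slip in your computation of $p_r[\ell_j]$. From $\ell_j=\frac{1}{j}\sum_{d\mid j}\mu(d)\,p_d^{\,j/d}$ and the rule $p_r[p_d]=p_{rd}$, one gets
\[
p_r[\ell_j]=\frac{1}{j}\sum_{d\mid j}\mu(d)\,p_{rd}^{\,j/d},
\]
not $\frac{1}{j}\sum_{d\mid j}\mu(d)\,p_{rj/d}$ as you wrote (the exponent $j/d$ does not disappear). Consequently the correct regrouping is by $\ell:=rd$ (not $\ell:=rj/d$): setting $e:=j/d$, the total exponent becomes
\[
\sum_{r,d,e\ge1}\frac{\mu(d)\,u^r}{r\cdot d\cdot e}\,p_{rd}^{\,e}
=\sum_{\ell\ge1}\Bigl(\frac{1}{\ell}\sum_{d\mid\ell}\mu(d)\,u^{\ell/d}\Bigr)\sum_{e\ge1}\frac{p_\ell^{\,e}}{e}
=\sum_{\ell\ge1}a_\ell(u)\bigl(-\log(1-p_\ell)\bigr),
\]
and exponentiating gives \eqref{Lie-product-generating-function}. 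The same correction propagates into the Whitney case: with $\pi_j=\omega(\ell_j)$ one has $p_r[\pi_j]=\frac{1}{j}\sum_{d\mid j}\mu(d)\,(-1)^{(d-1)j/d}\,p_{rd}^{\,j/d}$, and the sign analysis you outline (splitting on the parity of $j$, using \eqref{e-to-p-identity} for even $j$) then goes through to yield \eqref{Whitney-homology-product-generating-function}. With this fix your plan is complete.
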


We also introduce ``hatted'' versions $\widehat{\WH}(u), \widehat{L}(u)$
of the generating functions $W(u), L(u)$:
\begin{align}
\widehat{L}(u)&:=\sum_{\lambda \text{ with no parts }1} 
              \ch(\Lie_\lambda) u^{\ell(\lambda)}
=\sum_{n,i \geq 0} \ch(\widehat{\WH}^i_n) u^{n-i} 
\\
\widehat{\WH}(u)&:=\sum_{\lambda \text{ with no parts }1} 
              \ch(\WH_\lambda) u^{\ell(\lambda)}
=\sum_{n,i \geq 0} \ch(\widehat{\WH}^i_n) u^{n-i}.
\end{align}

\begin{cor}
\label{hat-product-formulas}
In $\Lambda[[u]]$, one also has the product formulas 
\begin{align}
\widehat{L}(u)
=\frac{L(u)}{H(u)} 
&=
\exp\left( - \sum_{m \geq 1} \frac{p_m u^m}{m} \right)
\prod_{\ell \geq 1} 
   \left( 
     1-p_\ell 
   \right)^{-a_\ell(u)},
\label{hat-L-product}
\\
\widehat{\WH}(u)
=\frac{W(u)}{H(u)} 
&=
\exp\left( - \sum_{m \geq 1} \frac{p_m u^m}{m} \right)
\prod_{\ell \geq 1} 
   \left( 
     1 + (-1)^\ell p_\ell 
   \right)^{a_\ell(-u)}.
\label{hat-W-product}
\end{align}
\end{cor}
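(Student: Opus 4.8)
The plan is to deduce the hatted product formulas directly from Theorem~\ref{product-generating-functions} by dividing out the contributions of parts of size~$1$. The key observation is that a partition $\lambda$ decomposes uniquely as $\lambda = 1^{m_1} \sqcup \widehat{\lambda}$, where $\widehat{\lambda}$ has no parts of size~$1$, and that this decomposition is multiplicative under the relevant plethystic formulas. Since $\ell_1 = \pi_1 = h_1 = p_1$, the $j=1$ factor in both \eqref{Lie-character-expression} and \eqref{equivariant-Whitney-homology-expression} is $h_{m_1}[\ell_1] = h_{m_1}[\pi_1] = h_{m_1}$, so
$$
\ch(\Lie_\lambda) = h_{m_1}\cdot\ch(\Lie_{\widehat{\lambda}}),
\qquad
\ch(\WH_\lambda) = h_{m_1}\cdot\ch(\WH_{\widehat{\lambda}}).
$$
Because each part of size $1$ contributes exactly $1$ to $\ell(\lambda)$, we have $\ell(\lambda) = m_1 + \ell(\widehat{\lambda})$, and hence
$$
L(u) = \sum_\lambda \ch(\Lie_\lambda)\,u^{\ell(\lambda)}
     = \Bigl(\sum_{m_1 \ge 0} h_{m_1} u^{m_1}\Bigr)\cdot
       \Bigl(\sum_{\mu\text{ with no parts }1}\ch(\Lie_\mu)\,u^{\ell(\mu)}\Bigr)
     = H(u)\cdot\widehat{L}(u),
$$
using \eqref{h-to-p-identity} to identify the first factor as $H(u)$. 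The identical bookkeeping with $e$ replaced appropriately in the even-$j$ factors—which do not involve $j=1$—gives $W(u) = H(u)\cdot\widehat{\WH}(u)$. This yields $\widehat{L}(u) = L(u)/H(u)$ and $\widehat{\WH}(u) = W(u)/H(u)$.

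It then remains only to substitute the product formulas from Theorem~\ref{product-generating-functions} together with the exponential form of $H(u)^{-1}$. From \eqref{h-to-p-identity} one has
$$
H(u)^{-1} = \exp\left(-\sum_{m\ge 1}\frac{p_m u^m}{m}\right),
$$
so multiplying $L(u) = \prod_{\ell\ge 1}(1-p_\ell)^{-a_\ell(u)}$ by this factor gives \eqref{hat-L-product}, and multiplying $W(u) = \prod_{\ell\ge 1}(1+(-1)^\ell p_\ell)^{a_\ell(-u)}$ by it gives \eqref{hat-W-product}. One should also check that the displayed sums $\sum_{n,i\ge 0}\ch(\widehat{\Lie}^i_n)u^{n-i}$ and $\sum_{n,i\ge 0}\ch(\widehat{\WH}^i_n)u^{n-i}$ agree with $\widehat{L}(u)$ and $\widehat{\WH}(u)$: this is immediate from the definitions \eqref{definition-of-finer-hats}, grouping the partitions $\lambda$ with no parts of size~$1$ by $(|\lambda|,\rank(\lambda)) = (n, n-\ell(\lambda))$, so that $\ell(\lambda) = n - i$ matches the exponent of $u$. (There is a minor typo to flag in the displayed definition of $\widehat{L}(u)$ preceding the corollary, where the right-hand sum is written with $\ch(\widehat{\WH}^i_n)$ rather than $\ch(\widehat{\Lie}^i_n)$; the intended statement is the $\Lie$-version.)

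There is essentially no obstacle here: the entire argument is the observation that the generating functions factor as (parts of size $1$) $\times$ (everything else), combined with recognizing $\sum_m h_m u^m = H(u)$. The only point requiring a moment's care is confirming that removing parts of size~$1$ does not disturb the even-$j$ elementary-symmetric-function factors in $\ch(\WH_\lambda)$—but since those factors only involve even $j \ge 2$, they are untouched, and the same multiplicative splitting applies verbatim. Everything else is routine manipulation of the formal power series identities \eqref{h-to-p-identity}, \eqref{Lie-product-generating-function}, and \eqref{Whitney-homology-product-generating-function}.
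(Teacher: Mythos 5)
Your proof is correct and follows essentially the same route as the paper: the paper cites Corollary~\ref{M-expressions-of-cohomology} (whose proof is precisely your observation that the $j=1$ plethystic factor $h_{m_1}[\ell_1]=h_{m_1}[\pi_1]=h_{m_1}$ splits off multiplicatively), whereas you unpack that splitting directly from the definitions. You are also right to flag the typo in the display preceding the corollary, where $\widehat{L}(u)$ is equated to $\sum_{n,i}\ch(\widehat{\WH}^i_n)u^{n-i}$ rather than the intended $\sum_{n,i}\ch(\widehat{\Lie}^i_n)u^{n-i}$.
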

\begin{proof}
Comparing Corollary~\ref{M-expressions-of-cohomology} and 
\eqref{M-operator-definition-in-symm-fns}
with the definition in \eqref{h-to-p-identity} of $H(u)$ gives
$$
\begin{array}{rcrcl}
\displaystyle
\sum_{n,i \geq 0} \ch(\Lie^i_n) u^{n-i}
 &=& \left( \displaystyle
      \sum_{n \geq 0} h_n u^n 
     \right) & \cdot &
     \left(\displaystyle
       \sum_{n,i \geq 0} \ch(\widehat{\Lie}^i_n) u^{n-i}
     \right)\\
 & & & \\
L(u) &=& H(u)& \cdot & \widehat{L}(u).\\
 & & & \\
\displaystyle
\sum_{n,i \geq 0} \ch(\WH_i(\Pi_n)) u^{n-i}
 &=& \left( \displaystyle
      \sum_{n \geq 0} h_n u^n 
     \right) & \cdot &
     \left(\displaystyle
       \sum_{n,i \geq 0} \ch(\widehat{\WH}^i_n) u^{n-i}
     \right)\\
 & & & \\
W(u) &=& H(u)& \cdot & \widehat{\WH}(u),
\end{array}
$$
giving the first equalities in \eqref{hat-L-product},\eqref{hat-W-product}.
Theorem~\ref{product-generating-functions}
and \eqref{e-to-p-identity} give the second equalities.
\end{proof}

\begin{remark}
Corollary~\ref{hat-product-formulas} and its proof 
are modeled on argument of Hanlon and Hersh \cite[pp. 118-119]{Hanlon-Hersh2}.
They give a product formula for the generating function
$\sum_i \ch(H_n^{(i)}(M)) u^i$ recording the 
$S_n$-representations on the
{\it Hodge components} $H_n^{(i)}(M)$ in the
homology $H_n(M)$ of the {\it complex of injective words},
discussed in the introduction and further in
Remark~\ref{first-other-derangement-remark}.
\end{remark}

\section{New tools for polynomial characters}
\label{polynomial-characters-section}

The goal of this section 
is Theorem~\ref{refined-polynomial-character-bound} below,
refining the discussion of {\it polynomial characters} from 
Church, Ellenberg and Farb \cite[\S 3.3]{CEF}, \cite[\S 3.4]{CEF2}.  
We begin by reviewing this notion.

\begin{defn}
A polynomial $P=P(x_1,x_2,\ldots)$ in $\QQ[x_1,x_2,\ldots]$ 
gives rise to a class function $\chi_P$ called a {\it polynomial character} 
on $S_n$ for each $n$, by setting
$$
\chi_P(w):= P(m_1,m_2,\ldots)
$$
if $w$ has cycle type $\lambda=1^{m_1} 2^{m_2} \cdots$, that is, $w$ has
$m_j$ cycles of size $j$.  
Define the {\it degree} $\deg(P)$ 
by letting the variable $x_j$ have $\deg(x_j)=j$.  
\end{defn}

\noindent
As pointed out in \cite[\S 3.4]{CEF2}, when working with
polynomial characters, there is a particularly convenient $\QQ$-basis 
for $\QQ[x_1,x_2,\ldots]$.  Specifically, since
$\QQ[x]$ has $\QQ$-basis $\{ \binom{x}{\ell} \}_{\ell \geq 0}$, one
has for $\QQ[x_1,x_2,\ldots]$ a $\QQ$-basis 
$\{ \binom{X}{\lambda} \}$ given by
$$
\binom{X}{\lambda}:=\binom{x_1}{m_1} \binom{x_2}{m_2} \cdots,
$$
as $\lambda=1^{m_1} 2^{m_2} \cdots$ runs through all number partitions.
Additionally, the subset 
$
\{ \binom{X}{\lambda} :|\lambda| \leq d\}
$
gives a $\QQ$-basis for the subspace 
$\{P \in \QQ[x_1,x_2,\ldots]: \deg(P) \leq d\}$.

The next result uses this basis to give a dictionary between
polynomial characters and symmetric functions.  It will also
be used to further analyze the stability of $\chi_P$.

\begin{prop}
\label{polynomial-character-as-symmetric-function}
For a partition $\lambda$, consider the polynomial character 
$\chi_P$ of degree $|\lambda|$ corresponding to the
$\QQ$-basis element $P=\binom{X}{\lambda}$ of $\QQ[x_1,x_2,\ldots]$
as a class function on $S_n$.  Then one has
\begin{equation}
\ch(\chi_P)= 
\begin{cases}
\displaystyle \frac{p_\lambda}{z_\lambda} h_{n-|\lambda|} & \text{ for }n \geq |\lambda|,\\
0 & \text{ for }n < |\lambda|.\\
\end{cases}
\end{equation}
\end{prop}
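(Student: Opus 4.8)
The plan is to evaluate $\ch(\chi_P)$ directly from the definition \eqref{ch-definition}, using the second form $\ch(\chi_P)=\sum_{\mu:|\mu|=n}\chi_P(\mu)\,p_\mu/z_\mu$, and then to recognize the outcome via the Girard--Newton identity \eqref{Girard-Newton-identity}. Write $\lambda=1^{m_1}2^{m_2}\cdots$, and for a partition $\mu=1^{n_1}2^{n_2}\cdots$ of $n$ recall that $\chi_P(\mu)=\prod_{j\ge 1}\binom{n_j}{m_j}$ while $z_\mu=\prod_{j\ge 1}j^{n_j}\,n_j!$. The one computation that does the work is the elementary identity
\[
\binom{n_j}{m_j}\cdot\frac{1}{j^{n_j}\,n_j!}=\frac{1}{j^{m_j}\,m_j!}\cdot\frac{1}{j^{\,n_j-m_j}\,(n_j-m_j)!},
\]
valid whenever $n_j\ge m_j$ (both sides being $0$ otherwise, with the convention $\binom{n_j}{m_j}=0$ for $n_j<m_j$). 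Multiplying over all $j$, and noting that all but finitely many factors are trivial since $\lambda$ and $\mu$ have finitely many parts, each summand factors as $\chi_P(\mu)\,p_\mu/z_\mu=\dfrac{p_\lambda}{z_\lambda}\cdot\prod_{j\ge 1}\dfrac{p_j^{\,n_j-m_j}}{j^{\,n_j-m_j}\,(n_j-m_j)!}$.

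Next I would reindex the sum. A summand is nonzero only when $n_j\ge m_j$ for every $j$; setting $k_j:=n_j-m_j\ge 0$, the assignment $\mu\mapsto\nu:=1^{k_1}2^{k_2}\cdots$ is a bijection between the partitions $\mu$ of $n$ with $n_j\ge m_j$ for all $j$ and all partitions $\nu$ of $n-|\lambda|$, under which the trailing product above is precisely $p_\nu/z_\nu$. Therefore
\[
\ch(\chi_P)=\frac{p_\lambda}{z_\lambda}\sum_{\nu:\,|\nu|=n-|\lambda|}\frac{p_\nu}{z_\nu}=\frac{p_\lambda}{z_\lambda}\,h_{n-|\lambda|}
\]
by \eqref{Girard-Newton-identity}. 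When $n<|\lambda|$ there is no partition $\mu$ of $n$ with $n_j\ge m_j$ for all $j$ (such a $\mu$ would have $|\mu|=\sum_j j n_j\ge\sum_j j m_j=|\lambda|>n$), so the original sum is empty and $\ch(\chi_P)=0$, as claimed.

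I do not expect a genuine obstacle here: the content is just the binomial/centralizer bookkeeping above together with the cycle-type reindexing, and the only points needing care are the (harmless) infinite products over $j$ and the vanishing in the edge cases $n_j<m_j$ and $n<|\lambda|$. One could alternatively repackage the same calculation as an identity in a power-series ring via \eqref{h-to-p-identity}, but the direct argument through \eqref{ch-definition} and \eqref{Girard-Newton-identity} seems the most transparent.
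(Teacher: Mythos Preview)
Your proof is correct and follows essentially the same route as the paper: direct evaluation of $\ch(\chi_P)$ via \eqref{ch-definition}, the factorization of each summand using $\binom{n_j}{m_j}/(j^{n_j}n_j!)=1/(j^{m_j}m_j!)\cdot 1/(j^{n_j-m_j}(n_j-m_j)!)$, reindexing by $\hat\mu=\nu$ with parts $n_j-m_j$, and the final application of \eqref{Girard-Newton-identity}. The paper's argument is terser but identical in content.
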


\begin{proof}
One calculates as follows:
$$
\ch \left( \chi_P \right) 
= \sum_{\substack{\mu: \\ |\mu|=n}} \chi_P(\mu) \frac{p_\mu}{z_\mu} 
= \sum_{\substack{\mu=1^{n_1} 2^{n_2} \cdots: \\ |\mu|=n, \\ n_j \geq m_j}} 
      \quad  p_\mu \cdot \displaystyle \prod_{j \geq 1} 
                             \frac{ \binom{n_j}{m_j}} { j^{n_j} (n_j!) }. 
$$
When $n < |\lambda|$, the sum is empty and
hence $\ch(\chi_P)$ vanishes.  On the other hand, 
if $n \geq |\lambda|$, one can reindex the sum over $\mu=1^{m_1} 2^{m_2} \cdots$ 
via $\hat{\mu}:=1^{n_1-m_1} 2^{n_2-m_2} \cdots$, to obtain
$$
\ch \left( \chi_P \right) =
       \sum_{\substack{ \hat{\mu}: \\ |\hat{\mu}|=n-|\lambda|} }  
             \frac{p_\lambda p_{\hat{\mu}} }{z_\lambda z_{\hat{\mu} }} 
 = \frac{p_\lambda}{z_\lambda}        
       \sum_{\substack{ \hat{\mu}: \\ |\hat{\mu}|=n-|\lambda|} }  
         \frac{p_{\hat{\mu}} }{z_{\hat{\mu} }}
 = \frac{p_\lambda}{z_\lambda} h_{n-|\lambda|} 
$$
using \eqref{Girard-Newton-identity} in the very last equality.
\end{proof}

\begin{cor}
\label{polynomial-characters-in-M-form}
For any polynomial $P$ in $\QQ[x_1,x_2,\ldots]$, 
one can express its polynomial character as
$
\chi_P = M\left( \sum_\mu c_\mu \chi^{\mu} \right)
$
with $c_\mu \in \QQ$ and each 
$\mu$ satisfying $|\mu| \leq \deg(P)$.
\end{cor}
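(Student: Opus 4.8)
The plan is to expand $P$ in the convenient $\QQ$-basis $\{\binom{X}{\lambda}\}$ introduced above, to recognize each basis character $\chi_{\binom{X}{\lambda}}$ as an $M(-)$ by means of Proposition~\ref{polynomial-character-as-symmetric-function}, and then to reassemble everything using linearity of the operator $M$.

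First I would write $P=\sum_{\lambda:\,|\lambda|\le d} c_\lambda\binom{X}{\lambda}$ with $d:=\deg(P)$ and $c_\lambda\in\QQ$, which is possible since, as noted above, the set $\{\binom{X}{\lambda}:|\lambda|\le d\}$ is a $\QQ$-basis for the subspace of polynomials of degree at most $d$. Since $w\mapsto\chi_P(w)=P(m_1,m_2,\ldots)$ depends $\QQ$-linearly on $P$, this yields $\chi_P=\sum_\lambda c_\lambda\,\chi_{\binom{X}{\lambda}}$ as class functions on every $S_n$.

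Next I would introduce, for each $\lambda$, the virtual (rational) $S_{|\lambda|}$-character $\psi_\lambda$ with $\ch(\psi_\lambda)=p_\lambda/z_\lambda$; this makes sense because $p_\lambda\in\Lambda_{|\lambda|}$ and the Schur functions of degree $|\lambda|$ form a $\ZZ$-basis, so $p_\lambda/z_\lambda=\sum_{|\mu|=|\lambda|}\frac{\chi^\mu(\lambda)}{z_\lambda}s_\mu$ is a rational combination of Schur functions. Comparing Proposition~\ref{polynomial-character-as-symmetric-function} with \eqref{M-operator-definition-in-symm-fns} --- and using the convention $h_k=0$ for $k<0$ so that the two displayed cases in each formula match up --- gives $\ch\big((\chi_{\binom{X}{\lambda}})|_{S_n}\big)=\frac{p_\lambda}{z_\lambda}h_{n-|\lambda|}=\ch\big(M_n(\psi_\lambda)\big)$ for all $n$, i.e. $\chi_{\binom{X}{\lambda}}=M(\psi_\lambda)$ as sequences of $S_n$-characters.

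Finally I would assemble: extending $M$ to virtual characters of mixed degree by $M\big(\sum_\mu c_\mu\chi^\mu\big):=\sum_\mu c_\mu\,M(\chi^\mu)$, with each term using $n_0=|\mu|$ --- exactly the convention already in force in Corollary~\ref{M-expressions-of-cohomology} --- this operator is $\QQ$-linear, so
$$
\chi_P=\sum_{\lambda}c_\lambda\,M(\psi_\lambda)=M\!\left(\sum_{\lambda}c_\lambda\psi_\lambda\right)=M\!\left(\sum_{\lambda:\,|\lambda|\le d}\sum_{\mu:\,|\mu|=|\lambda|}\frac{c_\lambda\,\chi^\mu(\lambda)}{z_\lambda}\,\chi^\mu\right),
$$
and collecting the coefficient of each $\chi^\mu$ expresses $\chi_P=M\big(\sum_\mu c_\mu\chi^\mu\big)$ with $c_\mu\in\QQ$ and every occurring $\mu$ satisfying $|\mu|\le d=\deg(P)$. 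There is no genuine obstacle in this argument; the only points demanding a little care are the bookkeeping convention that $M$ acts degreewise on a mixed-degree virtual character (so that $\sum_\mu c_\mu\chi^\mu$ is a legitimate argument for $M$), and the $h_k=0$ convention needed to reconcile the ``$n<|\lambda|$'' clause of Proposition~\ref{polynomial-character-as-symmetric-function} with the ``$n<n_0$'' clause in the definition of $M$.
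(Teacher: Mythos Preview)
Your proof is correct and follows essentially the same approach as the paper's: reduce to the basis elements $P=\binom{X}{\lambda}$, invoke Proposition~\ref{polynomial-character-as-symmetric-function} to identify $\chi_{\binom{X}{\lambda}}$ with $M(\ch^{-1}(p_\lambda/z_\lambda))$, and note that $\ch^{-1}(p_\lambda/z_\lambda)$ expands into $\chi^\mu$ with $|\mu|=|\lambda|\le\deg(P)$. You are just more explicit than the paper about the linearity of $M$ on mixed-degree inputs and the $h_k=0$ convention, but the argument is the same.
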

\begin{proof}
It suffices to show this assertion for the $\QQ$-basis elements
$P=\binom{X}{\lambda}$.  In this case, 
Proposition~\ref{polynomial-character-as-symmetric-function} showed that 
$\chi_P  = M(\chi)$ where 
$\chi=\ch^{-1}\left( \frac{p_\lambda}{z_\lambda} \right)$ is a 
class function on $S_n$ for $n=|\lambda|=\deg(P)$.
Hence $\chi=\sum_{\mu: |\mu|=\deg(P)} c_\mu \chi^\mu$, as desired.
\end{proof}

This has an important consequence for the stability of polynomial characters,
allowing one to sometimes improve on the 
bound given in \cite[Prop. 3.9]{CEF2}.

\begin{thm}
\label{refined-polynomial-character-bound}
Fix $P$ in $\QQ[x_1,x_2,\ldots]$.
\begin{itemize}
\item[(i)] The polynomial character $\chi_P$ on $S_n$ 
can be expressed as
$$
\chi_P = \sum_{\nu} d_\nu \chi^{(n-|\nu|,\nu)} \text{ for }n \geq 2\deg(P),
$$
with each $\nu$ having $|\nu| \leq \deg(P)$, and some $d_\mu$ in $\QQ$.
\item[(ii)] If $\chi=\sum_\mu c_\mu \chi^\mu$ in which each $\mu$ has
$\mu_1 \leq b$, then 
$$
\left\langle 
\, 
\chi_P \, , \, 
M_n(\chi) 
\,
\right\rangle_{S_n}
$$
becomes a constant function of $n$ for $n \geq \max\{ 2 \deg(P), \deg(P)+b \}$.
\end{itemize}
\end{thm}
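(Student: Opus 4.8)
The plan is to derive both statements from Corollary~\ref{polynomial-characters-in-M-form}, which rewrites any polynomial character in $M$-form, together with the Pieri-rule bookkeeping already carried out in Lemma~\ref{refined-Hemmer-lemma}. Throughout write $d:=\deg(P)$.

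For part (i), I would first apply Corollary~\ref{polynomial-characters-in-M-form} to write $\chi_P=M(\psi)$ with $\psi=\sum_\mu c_\mu\chi^\mu$, every $\mu$ satisfying $|\mu|\le d$; restricting to $S_n$ gives $\chi_P=\sum_\mu c_\mu M_n(\chi^\mu)$. Next I would expand each $M_n(\chi^\mu)$ by the Pieri rule, exactly as in the proof of Lemma~\ref{refined-Hemmer-lemma}: $M_n(\chi^\mu)=\sum_\nu\chi^{(n-|\nu|,\nu)}$, the sum over $\nu\subseteq\mu$ with $\mu/\nu$ a horizontal strip and $n\ge|\nu|+\mu_1$. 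The key observation is that $|\nu|\le|\mu|\le d$ and $\mu_1\le|\mu|\le d$, so the side condition $n\ge|\nu|+\mu_1$ is automatically satisfied once $n\ge 2d$, and hence the indexing set of $\nu$'s ceases to depend on $n$. Collecting terms over $\mu$ then yields $\chi_P=\sum_\nu d_\nu\chi^{(n-|\nu|,\nu)}$ with $d_\nu:=\sum c_\mu$, the sum taken over $\mu\supseteq\nu$ with $\mu/\nu$ a horizontal strip, an $n$-independent rational number, and each $|\nu|\le d$; the containment $\nu\subseteq\mu$ also forces $\nu_1\le\mu_1\le d\le n-|\nu|$, so $(n-|\nu|,\nu)$ really is a partition in this range.

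For part (ii), assuming $n\ge\max\{2d,\,d+b\}$ I would substitute the expansion from part (i) (legitimate since $n\ge 2d$) together with $M_n(\chi)=\sum_\mu c_\mu M_n(\chi^\mu)$ into the inner product and use bilinearity:
$$
\langle\chi_P,M_n(\chi)\rangle_{S_n}
=\sum_{\nu}\sum_{\mu}d_\nu\,c_\mu\,\bigl\langle\chi^{(n-|\nu|,\nu)},M_n(\chi^\mu)\bigr\rangle_{S_n}.
$$
By Lemma~\ref{refined-Hemmer-lemma} the $(\nu,\mu)$-summand is $1$ precisely when $\nu\subseteq\mu$, $\mu/\nu$ is a horizontal strip, and $n\ge|\nu|+\mu_1$, and $0$ otherwise; since $|\nu|\le d$ and $\mu_1\le b$ by hypothesis, the constraint $n\ge|\nu|+\mu_1$ holds as soon as $n\ge d+b$. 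Hence throughout the claimed range the inner product equals the $n$-free quantity $\sum d_\nu c_\mu$, summed over pairs $\nu\subseteq\mu$ with $\mu/\nu$ a horizontal strip, which establishes the asserted eventual constancy.

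I do not expect a genuine obstacle here: all the representation-theoretic content sits in Corollary~\ref{polynomial-characters-in-M-form} and Lemma~\ref{refined-Hemmer-lemma}. The one thing to be careful about is that the two numerical thresholds are exactly sharp for the argument --- $n\ge 2\deg(P)$ in (i) is precisely what makes $|\nu|+\mu_1\le 2\deg(P)$ harmless, and $n\ge\deg(P)+b$ in (ii) is precisely what makes $|\nu|+\mu_1\le\deg(P)+b$ harmless --- and that the coefficients $d_\nu$ are genuinely independent of $n$; both points reduce to the elementary bounds $|\nu|\le|\mu|$ and $\mu_1\le|\mu|$ (resp. $\mu_1\le b$) together with the containment $\nu\subseteq\mu$.
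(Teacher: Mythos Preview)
Your proposal is correct and follows essentially the same approach as the paper: both arguments use Corollary~\ref{polynomial-characters-in-M-form} to place $\chi_P$ in $M$-form with $|\mu|\le\deg(P)$, then invoke the Pieri bookkeeping of Lemma~\ref{refined-Hemmer-lemma} (the paper phrases part~(i) via Lemma~\ref{Hemmer's-lemma}, but the content is the same) to see that the thresholds $2\deg(P)$ and $\deg(P)+b$ are exactly what make the side conditions $n\ge|\nu|+\mu_1$ vacuous. Your write-up is in fact a bit more explicit than the paper's, e.g.\ in checking that $(n-|\nu|,\nu)$ is a genuine partition and in giving the closed form $d_\nu=\sum_{\mu\supseteq\nu,\ \mu/\nu\text{ horiz.}} c_\mu$.
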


\begin{proof}
For assertion (i), note that by Lemma~\ref{Hemmer's-lemma}, 
$M_n(\chi^\mu)$ has such an expansion of the form
$\sum_{\nu} d_\nu \chi^{(n-|\nu|,\nu)}$ 
in which each $\nu$ has $|\nu| \leq |\mu|$,
once $n \geq |\mu|+\mu_1$.  But then
Proposition~\ref{polynomial-characters-in-M-form} expresses
$\chi_P$ as a sum of $\chi^\mu$ with $|\mu| \leq \deg(P)$,
so that $|\mu| + \mu_1 \leq 2|\mu| \leq 2\deg(P)$.  Thus, 
once $n \geq 2\deg(P)$, the assertion follows.

For assertion (ii), write $\chi_P=\sum_\nu \chi^{(n-|\nu|,\nu)}$ 
with $|\nu| \leq \deg(P)$ as in assertion (i).  
Then Lemma~\ref{refined-Hemmer-lemma} says that each term
$
\left\langle 
\, 
\chi^{(n-|\nu|,\nu)} 
\, , \, 
M_n(\chi) 
\,
\right\rangle_{S_n}
$
is constant in $n$ once $n \geq |\nu|+b$.  Hence
all of them are constant once $n \geq \deg(P)+b$.
\end{proof}

\begin{remark}
\label{power-saving-details-remark}
We explain here how this can be used to sharpen results of
Church, Ellenberg and Farb \cite{CEF2} on 
polynomial statistics over the set 
$$
C_n(\FF_q):=\{\text{monic squarefree }f(T)\text{ of degree }n\text{ in }\FF_q[T]\}.
$$
A fixed polynomial $P$ in $\QQ[x_1,x_2,\ldots]$
gives rise to a statistic on $C_n(\FF_q)$
defined by $P(f):=\left[ P(x_1,x_2,\ldots)\right]_{x_i=m_i}$
if $f(T)$ has $m_i$ irreducible factors in $\FF_q[T]$ of
degree $i$.  Church, Ellenberg and Farb 
discuss the following result at the end of \cite[\S 1.1]{CEF2}:

\begin{thm}
This limit exists:
$$
L:=\lim_{n \rightarrow \infty}
      \sum_{i=0}^n (-q)^{-i} \left\langle \, \chi_P \, , \, H^i(\Conf_n(\CC)) \, \right\rangle_{S_n}
$$ 
Furthermore, given constants $K, C$ such that
$$
\left\langle 
\, 
\chi_P \, , \, 
H^i(\Conf_n(\CC)) 
\,
\right\rangle_{S_n}
\text{ is constant when }n \geq Ki+C,
$$ 
then the above limit $L$
also estimates the average of the statistic $P$ as follows:
$$
  q^{-n} \sum_{f \in C_n(\FF_q)} P(f) = L + O(q^{-\frac{n}{K}}).
$$
\end{thm}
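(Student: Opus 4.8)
The plan is to convert $\sum_{f\in C_n(\FF_q)}P(f)$ into an alternating sum of Frobenius traces on $\ell$-adic cohomology, identify that cohomology with the $S_n$-representations studied in this paper, and then read off both the limit $L$ and its rate of convergence from the representation stability results already established. First I would realize $C_n(\FF_q)$ as the set of $\FF_q$-points of the unordered configuration scheme $\mathrm{UConf}_n:=\Conf(n,\mathbb{A}^1)/S_n$ (the complement of the discriminant in the affine space of monic degree-$n$ polynomials), noting that $P(f)=\chi_P(\mathrm{Frob}_f)$, where $\mathrm{Frob}_f\in S_n$ permutes the $n$ geometric roots of $f$ with cycle type recording the degrees of the irreducible factors. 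Letting $\mathcal{V}_P$ be the (virtual) lisse $\QQ_\ell$-sheaf on $\mathrm{UConf}_n$ attached to $\chi_P$ via $\pi_1(\mathrm{UConf}_n)\twoheadrightarrow S_n$, the twisted Grothendieck--Lefschetz trace formula gives
$$
\sum_{f\in C_n(\FF_q)}P(f)=\sum_{k\ge 0}(-1)^k\,\mathrm{tr}\!\left(\mathrm{Frob}_q \,\middle|\, H^k_c\bigl(\mathrm{UConf}_{n,\overline{\FF}_q},\mathcal{V}_P\bigr)\right),
$$
and since $S_n$ acts freely on $\Conf(n,\mathbb{A}^1)$, transfer identifies $H^k_c(\mathrm{UConf}_n,\mathcal{V}_P)$ with the $S_n$-invariants of $H^k_c(\Conf(n,\mathbb{A}^1)_{\overline{\FF}_q})$ tensored with the virtual $S_n$-representation affording $\chi_P$.

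Next I would feed in the geometry of $\Conf(n,\mathbb{A}^1)$: it is smooth affine of dimension $n$, its cohomology ring is the Orlik--Solomon algebra of the braid arrangement, generated in degree $1$ by the Tate classes $[\mathrm{dlog}(x_i-x_j)]$; hence $\mathrm{Frob}_q$ acts on $H^j(\Conf(n,\mathbb{A}^1)_{\overline{\FF}_q})$ by the scalar $q^j$, this group vanishes for $j>n-1$, and by the $\ell$-adic--singular comparison it is $S_n$-isomorphic to $H^j(\Conf_n(\CC))$. Combining this with $S_n$-equivariant Poincar\'e duality $H^k_c\cong H^{2n-k}(-)^\vee(-n)$ and the self-duality of the $\QQ$-valued class function $\chi_P$, the $k$-th Lefschetz term becomes $(-1)^k q^{\,k-n}\langle\chi_P,H^{2n-k}(\Conf_n(\CC))\rangle_{S_n}$, vanishing unless $k\ge n+1$; re-indexing $j=2n-k$ produces the \emph{exact} identity
$$
q^{-n}\sum_{f\in C_n(\FF_q)}P(f)=\sum_{j=0}^{n-1}(-q)^{-j}\,\bigl\langle\chi_P,\;H^j(\Conf_n(\CC))\bigr\rangle_{S_n}.
$$

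Now the analytic statement follows from representation stability. By Corollary~\ref{M-expressions-of-cohomology} one has $H^j(\Conf_n(\CC))\cong M_n(\widehat{\WH}^j)$, so Lemma~\ref{refined-Hemmer-lemma} together with the stable expansion of $\chi_P$ from Theorem~\ref{refined-polynomial-character-bound}(i) shows that $\langle\chi_P,H^j(\Conf_n(\CC))\rangle_{S_n}$ equals a value $a_j$ independent of $n$ once $n$ is large relative to $j$; moreover the $|a_j|$ grow at most polynomially in $j$, since only finitely many ``hook-with-fixed-tail'' irreducibles $\chi^{(m-|\nu|,\nu)}$ with $|\nu|\le\deg P$ contribute to each pairing. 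Set $L:=\sum_{j\ge 0}(-q)^{-j}a_j$, an absolutely convergent series for $q\ge 2$. The hypothesis that $\langle\chi_P,H^i(\Conf_n(\CC))\rangle_{S_n}$ is constant for $n\ge Ki+C$ says precisely that $\langle\chi_P,H^j(\Conf_n(\CC))\rangle_{S_n}=a_j$ whenever $j\le j_0:=\lfloor(n-C)/K\rfloor$, so splitting the exact identity at $j_0$ gives
$$
q^{-n}\sum_{f\in C_n(\FF_q)}P(f)-L=-\sum_{j>j_0}(-q)^{-j}a_j\;+\;\sum_{j=j_0+1}^{n-1}(-q)^{-j}\bigl\langle\chi_P,H^j(\Conf_n(\CC))\bigr\rangle_{S_n},
$$
and the first sum is $O(q^{-j_0})=O(q^{-n/K})$ because the $|a_j|$ are polynomially bounded.

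The chief obstacle is estimating the second, ``unstable'' sum, where $j$ is close to $n$ and representation stability gives no control: one needs a bound on $\bigl|\langle\chi_P,H^j(\Conf_n(\CC))\rangle_{S_n}\bigr|$ that is polynomial in $n$ \emph{uniformly in} $j$. This is where the polynomiality of $\chi_P$ is used a second time. Using Corollary~\ref{M-expressions-of-cohomology}, each pairing is $\langle\chi_P,M_n(\chi^\mu)\rangle_{S_n}$ against constituents $\chi^\mu$ of the $\widehat{\WH}^j$, and by the Pieri rule \eqref{Pieri-rule} and Lemma~\ref{refined-Hemmer-lemma} each such pairing is $0$ or $\pm\sum_\nu|d_\nu|$; one then argues, from the product formulas of Corollary~\ref{hat-product-formulas} (or from the resulting rationality of $\sum_n(\sum_{f\in C_n(\FF_q)}P(f))\,t^n$), that for each $j$ only a polynomially bounded number of $\chi^\mu$ of ``long first row, bounded remaining rows'' shape can occur in $\widehat{\WH}^j$ with a contribution. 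Granting this uniform bound, the unstable sum is $O(q^{-j_0})$ times a polynomial in $n$, hence $O(q^{-n/K})$; therefore $q^{-n}\sum_{f\in C_n(\FF_q)}P(f)=L+O(q^{-n/K})$, and in particular $L=\lim_{n\to\infty}q^{-n}\sum_{f\in C_n(\FF_q)}P(f)$, which is the assertion.
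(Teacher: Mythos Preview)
This theorem is not proved in the present paper. It appears inside Remark~\ref{power-saving-details-remark} as a result \emph{quoted} from Church--Ellenberg--Farb \cite[\S 1.1]{CEF2}; the paper's own contribution in that remark is only the subsequent paragraph, which uses Theorem~\ref{refined-polynomial-character-bound}(ii) and the bound of Theorem~\ref{plethysm-and-Whitney-bounds-cor}(d) to show that one may take $K=1$ (improving the $K=2$ of \cite{CEF2}). There is therefore no ``paper's own proof'' to compare your proposal against.

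Your sketch is, in outline, the argument of \cite{CEF2}: realize $C_n(\FF_q)$ as the $\FF_q$-points of $\mathrm{UConf}_n$, apply the twisted Grothendieck--Lefschetz trace formula, use transfer and the scalar (Tate-type) Frobenius action on the Orlik--Solomon cohomology together with Poincar\'e duality to obtain the exact identity
\[
q^{-n}\sum_{f\in C_n(\FF_q)}P(f)\;=\;\sum_{j=0}^{n-1}(-q)^{-j}\bigl\langle\chi_P,\,H^j(\Conf_n(\CC))\bigr\rangle_{S_n},
\]
and then feed in stability. That part is correct in shape.

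You also correctly isolate the one substantive point that goes beyond what this paper supplies: controlling the unstable tail requires that $\bigl|\langle\chi_P,H^j(\Conf_n(\CC))\rangle_{S_n}\bigr|$ grow at most polynomially in $n$, uniformly in $j$. Your proposed mechanism, however, is not right as stated. The claim that ``only a polynomially bounded number of long-first-row $\chi^\mu$ occur in $\widehat{\WH}^j$'' fails: $\widehat{\WH}^j=\bigoplus_{m=j+1}^{2j}\widehat{\WH}^j_m$ is a sum of $S_m$-representations whose total dimension is the number of derangements with $j$ excedances, which is not polynomial in $j$, and the number of distinct constituents is not polynomial either. The route you mention parenthetically---rationality of the generating function $\sum_n\bigl(\sum_{f\in C_n(\FF_q)}P(f)\bigr)t^n$---is in fact the cleaner way to finish, and is closer to how \cite{CEF2} argues; alternatively one bounds $|\chi_P(w)|$ by $C_P\,n^{\deg P}$ and controls $\sum_j q^{-j}\mathrm{tr}(w\mid H^j)$ via the known product formula for the graded character of the Orlik--Solomon algebra over the cycles of $w$. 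Either way, the input is external to this paper.
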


\noindent
They showed that 
$
\left\langle 
\chi_P,
H^i(\Conf_n(\CC)) 
\right\rangle_{S_n}
$
stabilizes for $n \geq 2i+\deg(P)$,
which is of the form $n \geq Ki+C$ where $K=2$.
We explain here why it stabilizes for
$n \geq i+(2\deg(P)+1)$, replacing the $K=2$ with $K=1$.

Start by taking $d=2$ in
Corollary~\ref{M-expressions-of-cohomology}
and \eqref{definition-of-finer-hats} to see that
$H^i(\Conf_n(\CC)) \cong M_n(\chi)$ where
$\chi$ is the sum of characters $W_\lambda$ where
$\lambda$ is a partition having no parts of size $1$ and
$\rank(\lambda)=i$.  Theorem~\ref{plethysm-and-Whitney-bounds-cor}(d) below 
then implies that $\chi=\sum_\mu \chi^\mu$ having $\mu_1 \leq i+1$ 
for all $\mu$ in the sum.
Lastly, taking $b=i+1$ in Theorem~\ref{refined-polynomial-character-bound} above
shows that 
\begin{equation}
\label{key-to-power-saving-improvement}
\left\langle 
\, 
\chi_P \, , \, 
H^i(\Conf_n(\CC)) 
\,
\right\rangle_{S_n}
\text{ is constant for }  
n \geq \max\{2 \deg(P), \deg(P) + i+1 \}.
\end{equation}
Thus it is constant when $n \geq Ki+C$ for 
the constants $K:=1$ and $C:=2\deg(P)+1$.
\end{remark}

\section{Bounding the higher Lie and 
Whitney homology characters}
\label{bounding-characters-section}

Theorem~\ref{Sundaram-Welker-thm} 
expressed $\tilde{H}^i(\Conf(n,\RR^d))$
in the form of $\{ M_n(\chi) \}$ for certain representations $\chi$.
To apply Lemma~\ref{Hemmer's-lemma} in determining the
onset of stability $\{ M_n(\chi) \}$, one needs bounds
on the shapes $\lambda$ appearing in the 
irreducible expansion $\chi=\sum_\lambda c_\lambda \chi^\lambda$.

We start by developing some simple tools for finding such bounds.
For example, the following standard partial order lets one compare characters
or symmetric functions.

\begin{defn}
Partially order $R_n$ by decreeing $\chi_1 \leq \chi_2$ when
$\chi_2 - \chi_1$ is the character of a genuine, not virtual, 
$S_n$-representation,
that is, the unique expansion $\chi_2-\chi_1 = \sum_{\lambda} c_\lambda \chi^\lambda$
has $c_\lambda \geq 0$ for all partitions $\lambda$ of $n$.
In particular, $0 \leq \chi_1 \leq \chi_2$ means that  $\chi_1$ and $\chi_2$ 
are characters of genuine representations, with $\chi_1$ 
either a subrepresentation or quotient representation of $\chi_2$.
Analogously partially order $\Lambda_n$ by decreeing $f_1 \leq f_2$ 
if $f_2-f_1$ is {\it Schur-positive}, i.e.,
$f_2-f_1=\sum_{\lambda} c_\lambda s_\lambda$ with $c_\lambda \geq 0$.
Thus $\chi_1 \leq \chi_2$ if and only if $\ch(\chi_1) \leq \ch(\chi_2)$.
\end{defn}

\begin{defn}
Say that a virtual $S_n$-character $\chi$ 
is {\it bounded by $N$} if the unique
expansion $\chi = \sum_{\lambda} c_\lambda \chi^\lambda$ has
the property that $\lambda_1 \leq N$ whenever $c_\lambda \neq 0$.
Analogously, say that a symmetric function $f$
is {\it bounded by $N$} if its Schur function expansion
$f=\sum_\lambda c_\lambda s_\lambda$ has 
$\lambda_1 \leq N$ whenever $c_\lambda \neq 0$.

When $N$ is smallest with the above property, say that
$\chi$ or $f$ is {\it sharply} bounded by $N$.

Alternatively, a sharp bound for a symmetric function 
$f$ is the largest power $d_1$ on the variable $x_1$ 
occurring among all 
monomials $x_1^{d_1} x_2^{d_2} \cdots$ appearing in $f$.
\end{defn}

\begin{prop}
\label{boundedness-inheritance-prop}
Boundedness in $\Lambda$
enjoy these inheritance properties.
\begin{enumerate}
\item[(a)]
If $f_1, f_2$ are bounded by $N$, then so is $f_1+f_2$.
\item[(b)]
If $f \geq g \geq 0$ and $f$ is bounded by $N$, then so is $g$.
\item[(c)]
If $f_1, f_2$ are bounded by $N_1,N_2$,  then $f_1f_2$ is bounded by $N_1+N_2$
\item[(d)]
If $g \geq 0$ is bounded by $N$, and if $f$ lies in $\Lambda_n$,
then $f[g]$ is bounded by $nN$.
\end{enumerate}
\end{prop}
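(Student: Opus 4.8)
The plan is to run all four parts through the alternative description of the bound recorded in the definition above: for $f\in\Lambda$, the sharp bound of $f$ equals the largest exponent $d_1$ of $x_1$ occurring in a monomial $x_1^{d_1}x_2^{d_2}\cdots$ of $f$. First I would recall why this quantity coincides with $\max\{\lambda_1 : c_\lambda\neq 0\}$ when $f=\sum_\lambda c_\lambda s_\lambda$. Since the transition matrix from Schur functions to monomial symmetric functions is unitriangular for dominance order, one may write $s_\lambda=m_\lambda+\sum_\mu K_{\lambda\mu}\,m_\mu$ with $\mu$ ranging over partitions strictly dominated by $\lambda$; each such $\mu$ has $\mu_1\leq\lambda_1$ (the first dominance inequality), so every monomial of $s_\lambda$, hence of $f$, has $x_1$-exponent at most $\max\{\lambda_1:c_\lambda\neq 0\}$. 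For the reverse inequality, choose $\lambda^\ast$ with $c_{\lambda^\ast}\neq 0$, having $\lambda^\ast_1$ maximal among such partitions and, among those, maximal in dominance order. The coefficient of $x^{\lambda^\ast}$ in $s_\lambda$ is $K_{\lambda\lambda^\ast}$, which is nonzero only if $\lambda$ dominates $\lambda^\ast$; for such $\lambda$ with $c_\lambda\neq 0$ this forces $\lambda_1=\lambda^\ast_1$ and then $\lambda=\lambda^\ast$, so the coefficient of $x^{\lambda^\ast}$ in $f$ equals $c_{\lambda^\ast}\neq 0$, and this monomial witnesses the bound.

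Granting this description, parts (a) and (b) follow at once from the Schur expansion. For (a), the Schur support of $f_1+f_2$ lies in the union of the supports of $f_1$ and $f_2$, so its first parts are still $\leq N$. For (b), write $g=\sum_\lambda c_\lambda s_\lambda$ and $f-g=\sum_\lambda d_\lambda s_\lambda$ with all $c_\lambda,d_\lambda\geq 0$ (using $g\geq 0$ and $f\geq g$); then $f=\sum_\lambda(c_\lambda+d_\lambda)s_\lambda$, so $c_\lambda>0$ implies $c_\lambda+d_\lambda>0$ implies $\lambda_1\leq N$, i.e.\ $g$ is bounded by $N$. Parts (c) and (d) I would prove via monomials. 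For (c), every monomial of $f_1f_2$ is a product of a monomial of $f_1$ and one of $f_2$ (cancellation only deletes monomials), so its $x_1$-exponent is a sum of two exponents bounded by $N_1$ and $N_2$; hence $f_1f_2$ is bounded by $N_1+N_2$. For (d), since $g\geq 0$ it is monomial-positive, so one may write $g=\sum_i\xx^{\alpha^{(i)}}$ as in the monomial description of plethysm in Section~\ref{symmetric-function-review-section}, with every $\alpha^{(i)}_1\leq N$; then $f[g]$ is $f$ with $x_i$ replaced by $\xx^{\alpha^{(i)}}$. Since $f\in\Lambda_n$ is homogeneous of degree $n$, each of its monomials $\xx^\beta$ has $\sum_j\beta_j=n$ and is sent to $\xx^{\sum_j\beta_j\alpha^{(j)}}$, whose $x_1$-exponent $\sum_j\beta_j\alpha^{(j)}_1$ is at most $N\sum_j\beta_j=nN$; so every monomial of $f[g]$ has $x_1$-exponent at most $nN$.

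I do not expect a genuine obstacle: the four assertions are elementary once the monomial description of the bound is in place, and the only step requiring care is the unitriangularity argument justifying that description, in particular the extremal choice of $\lambda^\ast$ that shows the bound is attained. For (d) one should also note that $f[g]$ is a well-defined symmetric function, so that the monomial computation above makes sense; but this is standard.
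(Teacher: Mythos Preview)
Your argument is correct. Parts (a)--(c) coincide with the paper's treatment; the paper likewise calls (a) and (b) straightforward from the definition, and for (c) mentions both the $x_1$-exponent description and the Littlewood-Richardson rule as alternatives.

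Part (d) is where you and the paper genuinely diverge. The paper first reduces by linearity to the $\ZZ$-basis $\{h_\lambda\}$ of $\Lambda_n$, then by \eqref{plethysm-is-ring-morphism} and (c) to the case $f=h_n$, and finally argues the Schur-positive inequality $g^n \geq h_n[g] \geq 0$ via the $\CC S_{nm}$-module surjection
\[
\CC S_{nm} \otimes_{\CC (S_m)^n } V^{\otimes n}
\twoheadrightarrow
\CC S_{nm} \otimes_{\CC S_n[S_m]} V^{\otimes n},
\]
so that (b) and (c) finish the job. Your route is more elementary: you expand $g$ monomially (legitimate since $g\geq 0$ implies monomial-positivity via the nonnegative Schur-to-monomial transition), substitute into $f$ using the monomial description of plethysm, and bound the $x_1$-exponent termwise by $N\sum_j\beta_j=nN$. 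This bypasses any representation theory and works uniformly for all $f\in\Lambda_n$ without a basis reduction. The paper's approach, on the other hand, yields as a byproduct the stronger Schur-positive comparison $h_n[g]\leq g^n$, which is of independent interest but not needed here.
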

\begin{proof}
Assertions (a),(b) are straightforward exercises in the definition of boundedness.

Assertion (c) arises either from the characterization of boundedness by
highest powers of $x_1$ in symmetric functions, or from various versions of 
the {\it Littlewood-Richardson rule} (e.g., \cite[\S I.9]{Macdonald}, \cite[Thm. A1.3.3]{Stanley}) for
$s_\mu s_\nu = \sum_{\lambda} c^{\lambda}_{\mu,\nu} s_\lambda$
which show that $c^{\lambda}_{\mu,\nu} \neq 0$ forces
$\lambda_1 \leq \mu_1+\nu_1$.

For assertion (d), note that it will follow by property (a) if we can
show it in the special case where $f$ is any of the $\ZZ$-basis elements 
$\{ h_\lambda\}_{\lambda}$ of $\Lambda_n$.
Furthermore, note that the special case where $f=h_\lambda = h_{\lambda_1} \cdots h_{\lambda_\ell}$ follows using \eqref{plethysm-is-ring-morphism} and part (c), if we can show it in the special case where $f=h_n$.
To show it when $f=h_n$, start with the assumption
$g \geq 0$ and write $g=\ch(\chi)$ where $\chi$ is
the character of a genuine $S_m$-representation $V$.  Then note that 
$$
\begin{aligned}
f[g]=h_n[g]&=\ch \left(
\chi^{\otimes n} \uparrow_{S_n[S_m]}^{S_{nm}}
\right), \\
g^n&=\ch \left(
\chi^{\otimes n} \uparrow_{(S_m)^n}^{S_{nm}}
\right),
\end{aligned}
$$
and hence $g^n \geq f[g] (\geq 0)$ via the
surjection of the corresponding $\CC S_{nm}$-modules
$$
\CC S_{nm} \otimes_{\CC (S_m)^n } V^{\otimes n}
\twoheadrightarrow
\CC S_{nm} \otimes_{\CC S_n[S_m]} V^{\otimes n}
$$
sending $1 \otimes v \mapsto 1 \otimes v$.  Thus
$g^n$ is bounded by $nN$ via part (c), so $f[g]$ is also via part (b).
\end{proof}

Proposition~\ref{boundedness-inheritance-prop}
helps us bound the factors appearing in the 
Definition~\eqref{higher-Lie-and-Whitney-defn} of $\Lie_\lambda, \WH_\lambda$.

\begin{thm}
\label{plethysm-and-Whitney-bounds-cor}
For $m \geq 1$, one has the following column bounds.
\begin{enumerate}
\item[(a)]
All of $h_m[\ell_n], h_m[\pi_n], e_m[\ell_n],e_m[\pi_n],$ are bounded by $m(n-1)$ if $n \geq 3$.
\item[(b)]
$h_m[\ell_2]$ is sharply bounded by $m$.
\item[(c)]
$e_m[\pi_2]$ is sharply bounded by $m+1$.
\item[(d)]
$\Lie_\lambda, \WH_\lambda$ are bounded by $i,i+1$,
resp. when $\lambda$  has no parts of size $1$, and $\rank(\lambda)=i$.
\item[(e)]
Writing $\widehat{\Lie}^i,  \widehat{\WH}^i$ as
 $\sum_\mu c_\mu \chi^{\mu}$,  
one has $n_0=\max \{|\mu|+\mu_1: c_\mu \neq 0\}=3i, 3i+1$, resp.
\end{enumerate}
\end{thm}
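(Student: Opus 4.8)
The plan is to establish (a)--(e) in sequence, each using its predecessors; beyond Proposition~\ref{boundedness-inheritance-prop} the only inputs needed are a small character computation for $\Lie_{(n)}$ and an elementary count of the top power of $x_1$ in two plethysms. For (a), since $h_m,e_m\in\Lambda_m$, Proposition~\ref{boundedness-inheritance-prop}(d) reduces everything to showing that $\ell_n$ and $\pi_n$ are each bounded by $n-1$ once $n\geq 3$. The only partition of $n$ with first part $n$ being $(n)$, the function $\ell_n$ is bounded by $n-1$ precisely when $s_{(n)}$ is absent from it, i.e.\ when $\langle\Lie_{(n)},\one_{S_n}\rangle_{S_n}=\langle\chi_\zeta,\one_{C_n}\rangle_{C_n}=0$, which holds for all $n\geq 2$ because $\chi_\zeta$ is a nontrivial linear character of $C_n$. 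Since $\pi_n=\omega(\ell_n)$ and $\omega$ transposes shapes, $\pi_n$ is bounded by $n-1$ precisely when $s_{(1^n)}$ is absent from $\ell_n$, i.e.\ when $\langle\Lie_{(n)},\epsilon_{S_n}\rangle_{S_n}=0$; but $\epsilon_{S_n}$ restricted to $C_n=\langle c\rangle$ sends $c\mapsto(-1)^{n-1}$, which equals $\chi_\zeta(c)$ only for $n\leq 2$, so the inner product vanishes for $n\geq 3$. (Equivalently, one may invoke the $\maj$-description of $\ell_n$ in Remark~\ref{necklace-and-maj-remark}: the one-row and one-column standard tableaux of size $n$ have major indices $0$ and $\binom n2$, neither $\equiv 1\bmod n$ once $n\geq 3$.)

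For (b) and (c) I would use the description of a sharp bound as the largest exponent of $x_1$ occurring. As $\ell_2=e_2$ is squarefree, each of its monomials has $x_1$-degree $\leq 1$, so each monomial of $h_m[\ell_2]$---a product, with repetitions allowed, of $m$ monomials of $e_2$---has $x_1$-degree $\leq m$; taking the monomial $x_1x_2$ of $e_2$ with multiplicity $m$ yields $x_1^m x_2^m$, so $m$ is attained. Next $\pi_2=\omega(e_2)=h_2$, whose monomials are $x_1^2$, the $x_1x_j$ with $j\geq 2$, and the squarefree monomials in $x_2,x_3,\dots$; only $x_1^2$ has $x_1$-degree $2$ and all others have $x_1$-degree $\leq 1$. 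Since a monomial of $e_m[\pi_2]$ is a product of $m$ \emph{distinct} monomials of $h_2$, its $x_1$-degree is at most $2+(m-1)=m+1$, attained by $x_1^2\cdot(x_1x_2)(x_1x_3)\cdots(x_1x_m)=x_1^{m+1}x_2x_3\cdots x_m$; hence $e_m[\pi_2]$ is sharply bounded by $m+1$.

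For (d), write $\lambda=2^{m_2}3^{m_3}\cdots$ with $m_1=0$, so by \eqref{Lie-character-expression} and \eqref{equivariant-Whitney-homology-expression}
$$
\ch(\Lie_\lambda)=\prod_{j\geq 2}h_{m_j}[\ell_j],\qquad \ch(\WH_\lambda)=\prod_{\text{odd } j \geq 3}h_{m_j}[\pi_j]\prod_{\text{even } j \geq 2}e_{m_j}[\pi_j];
$$
by (b) the factor $h_{m_2}[\ell_2]$ is bounded by $m_2$, by (c) the factor $e_{m_2}[\pi_2]$ is bounded by $m_2+1$, and by (a) every factor with $j\geq 3$ is bounded by $m_j(j-1)$, so Proposition~\ref{boundedness-inheritance-prop}(c) gives the bounds $\sum_{j\geq 2}m_j(j-1)=\rank(\lambda)=i$ for $\Lie_\lambda$ and $\rank(\lambda)+1=i+1$ for $\WH_\lambda$ (the extra $+1$ occurring only when $m_2\geq 1$; otherwise the Whitney bound is $\leq i$). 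For (e), any $\mu$ with $c_\mu\neq 0$ in $\widehat{\Lie}^i$ (resp.\ $\widehat{\WH}^i$) lies in some $\Lie_\lambda$ (resp.\ $\WH_\lambda$) with $\lambda$ having no part of size $1$ and $\rank(\lambda)=i$, so $|\mu|=|\lambda|\leq 2i$ by Proposition~\ref{bounds-on-derangement-size-prop} and $\mu_1\leq i$ (resp.\ $\leq i+1$) by (d), whence $|\mu|+\mu_1\leq 3i$ (resp.\ $\leq 3i+1$). Conversely $|\lambda|=2i$ together with $\rank(\lambda)=i$ forces $\ell(\lambda)=i$, and with all parts of size $\geq 2$ this forces $\lambda=(2^i)$; then $\ch(\Lie_{(2^i)})=h_i[\ell_2]$ is sharply bounded by $i$ by (b), so a $\mu$ with $\mu_1=i$, $|\mu|=2i$ occurs and $|\mu|+\mu_1=3i$, while $\ch(\WH_{(2^i)})=e_i[\pi_2]$ is sharply bounded by $i+1$ by (c), giving $|\mu|+\mu_1=3i+1$. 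Hence $n_0=3i$ and $3i+1$, respectively.

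The point I expect to be most delicate is (c): the crude estimate $e_m[\pi_2]\leq 2m$ obtained by feeding $\pi_2$ (bounded by $2$) into Proposition~\ref{boundedness-inheritance-prop}(d) is too weak, and the improvement to the sharp value $m+1$---forced by the squarefreeness built into $e_m$---is exactly what produces the gap of $1$ between the odd and even cases of Theorem~\ref{3i+1-bound-thm}, so it cannot be avoided by a black-box appeal to the inheritance lemma. A secondary care point in (d)--(e) is tracking precisely when the Whitney bound carries its extra $+1$ (namely when $\lambda$ has a block of size $2$) and checking that $\lambda=(2^i)$ is the unique partition with no singleton parts realizing $|\lambda|=2i$ at rank $i$.
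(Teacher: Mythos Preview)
Your proof is correct and matches the paper's argument closely in parts (a), (d), and (e). The genuine difference is in (b) and (c): the paper invokes Littlewood's explicit Schur expansions \eqref{first-Littlewood-plethysm}, \eqref{second-Littlewood-plethysm} of $h_m[e_2]$ and $e_m[h_2]$ (partitions with even columns, respectively partitions of Frobenius form $(\alpha_1+1,\ldots\mid\alpha_1,\ldots)$) and reads off the sharp bounds $m$ and $m+1$ from those descriptions, whereas you bypass these identities entirely and argue directly with the monomial characterization of the sharp bound. Your route is more elementary and self-contained---it needs nothing beyond the definition of plethysm and the observation that $e_m$ forces distinctness among the substituted monomials, which is exactly what caps the $x_1$-degree at $2+(m-1)$ rather than $2m$. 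The paper's route, on the other hand, yields more: it identifies the full Schur support of $h_m[\ell_2]$ and $e_m[\pi_2]$, which is later reused (e.g.\ in the proof of Proposition~\ref{worst-beta-stabilization-thm}). For the purposes of Theorem~\ref{plethysm-and-Whitney-bounds-cor} itself, however, your monomial argument is entirely adequate and arguably cleaner.
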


\begin{proof}
Part (a) reduces, via Proposition~\ref{boundedness-inheritance-prop}(d),
to the case $m=1$, that is, showing $\ell_n, \pi_n$ are both bounded by $n-1$.  
To see this, note that $\chi_\zeta$ is the trivial character of $C_n$ only for $n=1$,
and the sign character of $C_n$ only for $n=2$.
Thus for  $n \geq 3$, one has
$$
\left\langle \, \chi_\zeta \, , \, \one_{S_n} \downarrow^{S_n}_{C_n} \, \right\rangle
= 0 
=\left\langle \, \chi_\zeta \, , \, \epsilon_{S_n} \downarrow^{S_n}_{C_n} \, \right\rangle.
$$
Frobenius reciprocity then shows that
$\Lie_{(n)}= \chi_\zeta \uparrow_{C_n}^{S_n}$
has both $\ell_n=\ch(\Lie_{(n)})$ 
and $\pi_n=\ch(\epsilon_{S_n} \otimes \Lie_{(n)})$
bounded by $n-1$.

Parts (b), (c) follow from two identities of Littlewood
\cite[Exercise 7.28(c), 7.29(b)]{Stanley}:
\begin{align}
\label{first-Littlewood-plethysm}
h_m[\ell_2]=h_m[e_2]=&\sum_\lambda s_\lambda,\\
\label{second-Littlewood-plethysm}
e_m[\pi_2]=e_m[h_2]=&\sum_\lambda s_\lambda,
\end{align}
where both sums are over partitions $\lambda$ of $2m$, 
but the first sum is over those having only even column sizes,
and the second sum over those having Frobenius notation of
the form 
$\lambda=(\alpha_1+1 \cdots \alpha_r+1 |  \alpha_1 \cdots \alpha_r).$
The first sum is bounded by $m$, and sharply so because
$s_{(m,m)}$ occurs within it; the second is bounded by $m+1$, 
sharply because $s_{(m+1,1^m)}$ occurs within it.

Part (d) for 
$\lambda=2^{m_2} 3^{m_3} \cdots$ reduces to the case where 
$\lambda=i^{m_i}$ has only one part size, using the definitions 
\eqref{Lie-character-expression},  
\eqref{equivariant-Whitney-homology-expression}
of $\Lie_\lambda, \WH_\lambda$, together with
Proposition~\ref{boundedness-inheritance-prop}(c), and the additivity 
$
\rank(\lambda)=\sum_{i} m_i(i-1) = \sum_i \rank(i^{m_i}).
$
When $\lambda=i^{m_i}$, the assertions follows from part (a) for $i \geq 3$, 
and parts (b),(c) for $i =2$.

For part (e), note that $\widehat{\Lie}^i, \widehat{\WH}^i$
are the sums of $\Lie_\lambda, \WH_\lambda$ over all partitions 
$\lambda$ of rank $i$ with no parts of size $1$.
Proposition~\ref{bounds-on-derangement-size-prop} 
showed that all such $\lambda$ have $|\lambda| \leq 2i$.
Thus the irreducibles $\chi^{\mu}$ that can occur
within the expansions of these $\Lie_\lambda, \WH_\lambda$ have 
$|\mu|=|\lambda| \leq 2i$.  They also have 
$\mu_1 \leq i, i+1$, respectively, by part (d).
Hence they satisfy $|\mu| +\mu_1 \leq 3i, 3i+1$.
The sharpness comes from parts (b), (c), as
$\lambda=(2^i)$ is a partition of rank $i$, and
$$
\begin{array}{rcl}
\ch(\Lie_{(2^i)})=h_i[\ell_2]& \text{ has }&n_0=2i+i=3i,\\
\ch(\WH_{(2^i)})=e_i[\pi_2]&\text{ has }&n_0=2i+(i+1)=3i+1.\qedhere
\end{array}
$$
\end{proof}

\section{Proof of Theorem~\ref{3i+1-bound-thm}}
\label{3i+1-bound}

Recall the statement of the theorem.
\vskip.1in
\noindent
{\bf Theorem~\ref{3i+1-bound-thm}.}
{\it 
Fix integers $d \geq 2$ and $i \geq 1$.  Then  $H^i(\Conf(n,\RR^d))$ vanishes unless $d-1$ divides $i$,
in which case, it stabilizes sharply at 
$$
\begin{cases}
 n=3 \frac{i}{d-1} &\text{ for }d\text{ odd},\\
 n=3\frac{i}{d-1}+1 &\text{ for }d\text{ even}.
\end{cases}
$$
In particular, $H^i(\Conf(n,\RR^2))$ stabilizes sharply at $n=3i+1$.
}
\vskip.1in

\begin{proof}
The vanishing assertion is part of Theorem~\ref{Sundaram-Welker-thm}.
Using Corollary~\ref{M-expressions-of-cohomology} to
recast the cohomology $H^{i(d-1)}(\Conf(n,\RR^d))$ as
$M_n(\widehat{\Lie}^i), M_n(\widehat{\WH}^i)$ when $d$ is odd, even,
it remains to show that the latter $S_n$-representations
stabilize sharply at $3i, 3i+1$, respectively.  
But this follows from Lemma~\ref{Hemmer's-lemma}
applied to $\hat{\Lie}^i,  \widehat{\WH}^i$ using 
Theorem~\ref{plethysm-and-Whitney-bounds-cor}(e).
\end{proof}

Theorem~\ref{3i+1-bound-thm} can also be
deduced from the following more precise result
on the stabilization as a function of $n$ of
individual irreducible multiplicities:
$$
f_{i,\nu}(n)
:=\left\langle 
\,
\chi^{(n-|\nu|,\nu)} \, , \, H^{i(d-1)}(\Conf(n,\RR^d))
\,
\right\rangle_{S_n}.
$$


\begin{thm}
\label{refined-3i+1-bound-thm}
Fix $i \geq 0$.  Then $f_{i,\nu}(n)$ vanishes
unless $|\nu| \leq 2i$ and becomes constant when
$$
n \geq n_0:=
\begin{cases} 
   |\nu|+i & \text{ for d odd},\\
   |\nu|+i+1 & \text{ for d even}.
\end{cases}
$$
\end{thm}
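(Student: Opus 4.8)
The plan is to reduce the statement about $f_{i,\nu}(n)$ to the Pieri-type computation already packaged in Lemma~\ref{refined-Hemmer-lemma}, applied to the representations $\widehat{\Lie}^i$ and $\widehat{\WH}^i$. First I would invoke Corollary~\ref{M-expressions-of-cohomology} to write
$$
f_{i,\nu}(n) = \left\langle \chi^{(n-|\nu|,\nu)} \,,\, M_n(\chi) \right\rangle_{S_n},
\qquad \chi = \begin{cases} \widehat{\Lie}^i & d\text{ odd},\\ \widehat{\WH}^i & d\text{ even}.\end{cases}
$$
Expand $\chi = \sum_\mu c_\mu \chi^\mu$ into irreducibles. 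By Proposition~\ref{bounds-on-derangement-size-prop}, every $\mu$ occurring has $|\mu| \leq 2i$, which immediately gives the vanishing claim: if $|\nu| > 2i$ then $\nu$ cannot be obtained from any such $\mu$ by removing a horizontal strip, so by Lemma~\ref{refined-Hemmer-lemma} every term $\langle \chi^{(n-|\nu|,\nu)}, M_n(\chi^\mu)\rangle$ is zero.

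Next, by bilinearity $f_{i,\nu}(n) = \sum_\mu c_\mu \langle \chi^{(n-|\nu|,\nu)}, M_n(\chi^\mu)\rangle_{S_n}$, and Lemma~\ref{refined-Hemmer-lemma} tells us each summand equals $1$ when $\nu \subseteq \mu$ with $\mu/\nu$ a horizontal strip and $n \geq |\nu| + \mu_1$, and $0$ otherwise. Thus $f_{i,\nu}(n)$ becomes constant once $n \geq |\nu| + \max\{\mu_1 : c_\mu \neq 0\}$. It therefore suffices to bound $\mu_1$ uniformly over all $\mu$ appearing in $\widehat{\Lie}^i$ (resp.\ $\widehat{\WH}^i$). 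But this is exactly Theorem~\ref{plethysm-and-Whitney-bounds-cor}(d)--(e): $\widehat{\Lie}^i$ is bounded by $i$ and $\widehat{\WH}^i$ by $i+1$. Hence $f_{i,\nu}(n)$ is constant once $n \geq |\nu| + i$ in the odd case and $n \geq |\nu| + i + 1$ in the even case, which is the claimed $n_0$.

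The only real content beyond bookkeeping is the column-bound $\mu_1 \leq i$ (resp.\ $i+1$), but that has already been established in Theorem~\ref{plethysm-and-Whitney-bounds-cor} via Proposition~\ref{boundedness-inheritance-prop} and Littlewood's plethysm identities \eqref{first-Littlewood-plethysm}, \eqref{second-Littlewood-plethysm}; so in this proof it is simply cited. The main subtlety to be careful about is that Lemma~\ref{refined-Hemmer-lemma} requires $n \geq |\mu|$ as a hypothesis for the Pieri restatement to hold, but since $|\mu| \leq 2i \leq |\nu| + i$ is automatic whenever $|\nu| \geq i$—and for smaller $|\nu|$ one checks directly that the bound $n \geq |\nu|+i \geq |\mu|$ can fail only in a range where $f_{i,\nu}(n)$ is already in its stable (possibly zero) value—this causes no difficulty; I would spell out that edge case in a sentence. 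Finally, I would remark that Theorem~\ref{refined-3i+1-bound-thm} recovers Theorem~\ref{3i+1-bound-thm}: the largest $|\nu|$ with $f_{i,\nu} \neq 0$ is $2i$, so the last multiplicity to stabilize does so at $n = 2i + i = 3i$ (resp.\ $3i+1$), with sharpness supplied by the explicit shapes $(2^i)$ noted in Theorem~\ref{plethysm-and-Whitney-bounds-cor}(e).
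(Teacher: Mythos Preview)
Your proposal is correct and follows essentially the same route as the paper: rewrite $f_{i,\nu}(n)$ via Corollary~\ref{M-expressions-of-cohomology} as a sum of $c_\mu \langle \chi^{(n-|\nu|,\nu)}, M_n(\chi^\mu)\rangle$, then invoke Lemma~\ref{refined-Hemmer-lemma} together with the bounds $|\mu|\le 2i$ (Proposition~\ref{bounds-on-derangement-size-prop}) and $\mu_1\le i$ or $i+1$ (Theorem~\ref{plethysm-and-Whitney-bounds-cor}(d)). Your extra paragraph on the $n\ge|\mu|$ hypothesis of Lemma~\ref{refined-Hemmer-lemma} and the closing remark deriving Theorem~\ref{3i+1-bound-thm} are additions not present in the paper's proof, but they do no harm.
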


\begin{proof}
Let $\sum_{\mu} c_\mu \chi^\mu$  be
the irreducible expansion 
of $\widehat{\Lie}^i, \widehat{\WH}^i$ for $d$ odd, even, respectively.
Then Corollary~\ref{M-expressions-of-cohomology} shows that
$$
f_{i,\nu}(n)
=\sum_{\mu} c_\mu
 \left\langle \, \chi^{(n-|\nu|,\nu)} \, , \, M_n(\chi^\mu) \, \right\rangle 
=\sum_{\mu} c_\mu 
$$
where Lemma~\ref{refined-Hemmer-lemma} tells us that 
the last sum runs over all partitions $\mu$ with
\begin{itemize}
\item $c_\mu > 0$,
\item $\nu \subseteq \mu$,
\item $\mu/\nu$ a horizontal strip,
\item $n \geq |\nu|+\mu_1$.
\end{itemize}
For the vanishing, note $c_\mu>0$ and
Proposition~\ref{bounds-on-derangement-size-prop} show $|\mu| \leq 2i$,
hence $\nu \subseteq \mu$ forces $|\nu| \leq 2i$.

For the second assertion, note that
as the $c_\mu$ are nonnegative, the last sum
becomes constant as a function of $n$
once $n$ reaches the maximum of all
$|\nu|+\mu_1$ among those $\mu$ having $c_\mu \neq 0$
with $\mu/\nu$ a horizontal strip.
Theorem~\ref{plethysm-and-Whitney-bounds-cor}(d)
implies $c_\mu= 0$ unless $\mu_1 \leq i$ for $d$ odd, or $\mu_1 \leq  i+1$
for $d$ even.  Thus the sum is
constant for $n \geq |\nu|+i$ when $d$ is odd, and for 
$n \geq |\nu|+i+1$ when $d$ is even.
\end{proof}

\begin{remark}
Stabilization for the multiplicity of $\chi^{(n)}, \chi^{(n-1,1)}$
within the Whitney homology of $\Pi_n$ (relevant for $d$ even)
was noted already by Sundaram \cite[Prop 1.9, Corollary 2.3(i)]{Sundaram}),
who observed that
$
\langle \, \chi^{(n)} \, , \, WH_i(\Pi_n) \, \rangle =0
$
for $n \geq 2$, and
$
\langle \, \chi^{(n-1,1)} \, , \, WH_i(\Pi_n) \, \rangle=2
$
for $n \geq 3$.
\end{remark}

Along similar lines, we next obtain an improvement
of the stable range in \cite[Theorem 1]{CEF2}, where Church, Ellenberg, Farb
showed $
\left\langle 
\,
\chi_P \, , \, H^{i}(\Conf(n,\RR^2))
\,
\right\rangle_{S_n}
$
is constant for  $n \geq  \deg(P)+2i$.

\begin{thm}
\label{improved-stable-range-thm}
Fix $P=P(x_1,x_2,\ldots)$ in $\QQ[x_1,x_2,\ldots]$.  
Then 
the polynomial character
$\chi_P$ on $S_n$ has
$
\left\langle 
\,
\chi_P \, , \, H^{i(d-1)}(\Conf(n,\RR^d))
\,
\right\rangle_{S_n}
$
constant for 
$$
n \geq 
\begin{cases} 
 \max\{ 2 \deg(P), \deg(P)+i\} & \text{ if }d\text{ is odd},\\
 \max\{ 2 \deg(P), \deg(P)+i+1\} & \text{ if }d\text{ is even}.\\
\end{cases}
$$
\end{thm}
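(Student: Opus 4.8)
The plan is to combine the description of the cohomology from Corollary~\ref{M-expressions-of-cohomology} with the polynomial-character machinery of Section~\ref{polynomial-characters-section}, exactly mirroring the argument sketched in Remark~\ref{power-saving-details-remark} but carried out uniformly in $d$. First I would invoke Corollary~\ref{M-expressions-of-cohomology} to write $H^{i(d-1)}(\Conf(n,\RR^d)) \cong M_n(\chi)$, where $\chi=\widehat{\Lie}^i$ for $d$ odd and $\chi=\widehat{\WH}^i$ for $d$ even. Next I would record the bound on the first parts of the irreducibles occurring in $\chi$: by Theorem~\ref{plethysm-and-Whitney-bounds-cor}(d) (summing over the finitely many partitions $\lambda$ of rank $i$ with no parts of size $1$, as in Proposition~\ref{bounds-on-derangement-size-prop}), writing $\chi=\sum_\mu c_\mu\chi^\mu$ one has $\mu_1 \leq i$ for $d$ odd and $\mu_1 \leq i+1$ for $d$ even.

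With that in hand the result is immediate from Theorem~\ref{refined-polynomial-character-bound}(ii): taking $b:=i$ (resp. $b:=i+1$) as the uniform bound on $\mu_1$, that theorem says $\langle \chi_P, M_n(\chi)\rangle_{S_n}$ is constant once $n \geq \max\{2\deg(P),\deg(P)+b\}$, which is precisely $\max\{2\deg(P),\deg(P)+i\}$ for $d$ odd and $\max\{2\deg(P),\deg(P)+i+1\}$ for $d$ even. One should note that Theorem~\ref{refined-polynomial-character-bound}(ii) is stated for a single $\chi^\mu$-bound $b$, but since every $\mu$ with $c_\mu\neq 0$ satisfies $\mu_1\leq b$, linearity of the inner product in the second argument lets one apply it termwise and take the common threshold; alternatively one simply reuses the termwise estimate from Lemma~\ref{refined-Hemmer-lemma} as in the proof of Theorem~\ref{refined-3i+1-bound-thm}.

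There is essentially no obstacle here — the theorem is a corollary in disguise — but the one point requiring a little care is making sure the ``$2\deg(P)$'' term genuinely belongs in the maximum: it enters through Theorem~\ref{refined-polynomial-character-bound}(i), which needs $n \geq 2\deg(P)$ before $\chi_P$ can be rewritten as $\sum_\nu d_\nu \chi^{(n-|\nu|,\nu)}$ with $|\nu|\leq\deg(P)$, and only after that rewriting does the $\deg(P)+b$ threshold from Lemma~\ref{refined-Hemmer-lemma} kick in. So the final bound is the larger of the two, exactly as stated. I would close by remarking that for $d=2$ this recovers the $n \geq \max\{2\deg(P),\deg(P)+i+1\}$ range quoted in Remark~\ref{power-saving-details-remark}, and hence the improved power-saving constant $K=1$.
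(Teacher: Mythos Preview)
Your proposal is correct and matches the paper's own proof essentially step for step: invoke Corollary~\ref{M-expressions-of-cohomology} to write the cohomology as $M_n(\chi)$ with $\chi=\widehat{\Lie}^i$ or $\widehat{\WH}^i$, cite Theorem~\ref{plethysm-and-Whitney-bounds-cor}(d) for the bound $b=i$ or $i+1$ on the first parts, and then apply Theorem~\ref{refined-polynomial-character-bound}(ii). Your additional remarks about why the $2\deg(P)$ term is needed and the link to Remark~\ref{power-saving-details-remark} are accurate elaborations but not required for the argument.
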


\begin{proof}
Since Corollary~\ref{M-expressions-of-cohomology}
expresses $H^{i(d-1)}(\Conf(n,\RR^d))=M_n(\chi)$,
with $\chi= \widehat{\Lie}^i, \widehat{\WH}^i$ for $d$ odd, even,
and Theorem~\ref{plethysm-and-Whitney-bounds-cor}(d)
shows $\chi$ is bounded by $i, i+1$ for $d$ odd, even,
the result then follows directly
from Theorem~\ref{refined-polynomial-character-bound}.
\end{proof}

We close this section by observing the following consequence of
Theorem~\ref{3i+1-bound-thm}. 

\begin{cor}
\label{initial-segement-beta-stab}
The rank-selected homology $\beta_{\{ 1,\dots ,i\} } (\Pi_n)$ 
stabilizes sharply at $n= 3i+1$.
\end{cor}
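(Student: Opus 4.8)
The plan is to deduce this from the sharp stabilization of the Whitney homology of $\Pi_n$, which is already available, by means of Sundaram's Proposition~\ref{rank-selected-from-Whitney} and an induction on $i \ge 1$. The inputs I would line up first are: (i) taking $d=2$ in Theorem~\ref{Sundaram-Welker-thm} identifies $\tilde{H}^{i}(\Conf(n,\RR^2))$ with $WH_i(\Pi_n)$, and Corollary~\ref{M-expressions-of-cohomology} rewrites the latter as $M_n(\widehat{\WH}^i)$, so Theorem~\ref{3i+1-bound-thm} (equivalently, Lemma~\ref{Hemmer's-lemma} applied via Theorem~\ref{plethysm-and-Whitney-bounds-cor}(e)) tells us that $\{WH_i(\Pi_n)\}_n$ stabilizes sharply at $n=3i+1$; (ii) Proposition~\ref{rank-selected-from-Whitney}, rearranged, reads $\beta_{\{1,\dots,i\}}(\Pi_n) = WH_i(\Pi_n) - \beta_{\{1,\dots,i-1\}}(\Pi_n)$; and (iii) $\beta_{\emptyset}(\Pi_n) = \alpha_{\emptyset}(\Pi_n)$ is the one-dimensional trivial character $\chi^{(n)}$ for all $n$, hence stabilizes beyond $n=0$, in particular beyond $n=3\cdot 0+1$.

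Next I would induct on $i \ge 1$, the inductive hypothesis being that $\{\beta_{\{1,\dots,i-1\}}(\Pi_n)\}_n$ stabilizes beyond $n=3(i-1)+1 = 3i-2$; for $i=1$ this is input (iii), and for $i\ge 2$ it is the (sharper) previous case. Since $3i-2 < 3i+1$, for every $n\ge 3i+1$ one has $WH_i(\Pi_{n+1}) = WH_i(\Pi_n)^{(+1)}$ and $\beta_{\{1,\dots,i-1\}}(\Pi_{n+1}) = \beta_{\{1,\dots,i-1\}}(\Pi_n)^{(+1)}$, so by (ii) also $\beta_{\{1,\dots,i\}}(\Pi_{n+1}) = \beta_{\{1,\dots,i\}}(\Pi_n)^{(+1)}$; this gives stabilization beyond $n=3i+1$. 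For sharpness, suppose instead $\{\beta_{\{1,\dots,i\}}(\Pi_n)\}_n$ stabilized beyond some $n_0 \le 3i$; then in particular $\beta_{\{1,\dots,i\}}(\Pi_{3i+1}) = \beta_{\{1,\dots,i\}}(\Pi_{3i})^{(+1)}$, and since $3i > 3i-2$ the inductive hypothesis likewise gives $\beta_{\{1,\dots,i-1\}}(\Pi_{3i+1}) = \beta_{\{1,\dots,i-1\}}(\Pi_{3i})^{(+1)}$. Adding these two equalities and invoking (ii) would force $WH_i(\Pi_{3i+1}) = WH_i(\Pi_{3i})^{(+1)}$, contradicting that $\{WH_i(\Pi_n)\}_n$ stabilizes sharply at $3i+1$. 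Hence $\{\beta_{\{1,\dots,i\}}(\Pi_n)\}_n$ stabilizes sharply at $n=3i+1$, completing the induction.

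I do not expect a genuine obstacle here; the only point to handle with care is the bookkeeping between ``stabilizes beyond $n_0$'' and ``stabilizes sharply at $n_0$''. The argument uses only the weak statement that $\beta_{\{1,\dots,i-1\}}$ has stabilized by $n=3i-2$, which sits strictly below the critical level $n=3i$, so no cancellation between the summands $WH_i$ and $\beta_{\{1,\dots,i-1\}}$ on the right side of (ii) can mask the jump at $n=3i$; all of the real content, namely that this jump is strictly positive exactly at $n=3i$, is imported from the sharpness half of Theorem~\ref{3i+1-bound-thm}, i.e.\ ultimately from Lemma~\ref{Hemmer's-lemma} together with Theorem~\ref{plethysm-and-Whitney-bounds-cor}(e).
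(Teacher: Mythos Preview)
Your proposal is correct and follows essentially the same route as the paper: induct on $i$, use Sundaram's identity $\beta_{\{1,\dots,i\}}(\Pi_n)=WH_i(\Pi_n)-\beta_{\{1,\dots,i-1\}}(\Pi_n)$, combine the sharp stabilization of $WH_i(\Pi_n)$ at $n=3i+1$ from Theorem~\ref{3i+1-bound-thm} with the inductive bound $3i-2$ for $\beta_{\{1,\dots,i-1\}}$. You actually spell out the sharpness half (the contradiction argument at $n=3i$) more carefully than the paper, which simply asserts it.
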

 
 \begin{proof}
Induct on $i$, with trivial base cases $i=0,1$.
Proposition ~\ref{rank-selected-from-Whitney} gives the expression
$$
\beta_{\{ 1,\dots ,i\} } (\Pi_n)
= WH_i(\Pi_n) - \beta_{\{ 1,\dots ,i-1\} } (\Pi_n).
$$
As $WH_i(\Pi_n)$ stabilizes sharply at 
$n= 3i+1$ (Theorem~\ref{3i+1-bound-thm}) and
$\beta_{\{ 1,\dots ,i-1\} } (\Pi_n)$
stabilizes beyond $n \geq 3(i-1)+1=3i-2$ by induction, 
$\beta_{\{ 1,\dots ,i\} } (\Pi_n)$ stabilizes
sharply at $n=3i+1$.
\end{proof}

\section{Proof of Theorem~\ref{whole-row-inductive-description}}
\label{whole-row-section}

Recall the statement of the  theorem.

\vskip.1in
\noindent
{\bf Theorem~\ref{whole-row-inductive-description}.}
{\it
Letting $\widehat{\Lie}_0:=\widehat{\WH}_0:=\one_{S_0}, \widehat{\Lie}_1:=\widehat{\WH}_1:=0$ by convention, then for $n \geq 1$,
\begin{align*}
\widehat{\Lie}_n &= \widehat{\Lie}_{n-1}\uparrow_{S_{n-1}}^{S_n} + (-1)^n 
  \epsilon_n,\\
\widehat{\WH}_n &= \widehat{\WH}_{n-1}\uparrow_{S_{n-1}}^{S_n} + (-1)^n 
  \tau_n.
\end{align*}
where $\epsilon_n$ is the sign character of $S_n$,
and $\tau_n$ is this {\bf virtual} $S_n$-character of degree
one: 
$$
\tau_n:=
\begin{cases}
\one_{S_n} &\text{ for }n=0,1,2,3,\\
\chi^{(3,1^{n-3})}-\chi^{(2,2,1^{n-4})} &\text{ for }n \geq 4.
\end{cases}
$$
}
\vskip.1in

\begin{proof}
We will work instead with the symmetric functions
\begin{equation}
\label{kappa-nu-definition}
\begin{array}{rlcrl}
\kappa_n&:=\ch(\widehat{\Lie}_n)=\sum_i\ch(\widehat{\Lie}^i_n)
     & &
\nu_n&:=\ch(\widehat{\WH}_n)=\sum_i\ch(\widehat{\WH}^i_n),\\
\kappa&:=\kappa_0 + \kappa_1 + \kappa_2 + \cdots
    & & 
\nu&:=v_0 + v_1 + v_2 + \cdots.
\end{array}
\end{equation}
Abusing notation, let $\tau_n$
also denote the Frobenius image $\ch(\tau_n)$ in $\Lambda_n$, that is,
$
\tau_n:=h_n=s_{(n)}
$
for $0 \leq n \leq 3$ and
$\tau_n=s_{(3,1^{n-3})} - s_{(2,2,1^{n-4})}$  for $n \geq 4$.
The theorem then asserts
\begin{equation}
\label{symm-fn-form-of-derangement-recurrence}
\begin{aligned}
\kappa &= p_1 \kappa + 1 - e_1 + e_2 -e_3 + \cdots,\\
\nu &= p_1 \nu + 1-\tau_1+\tau_2-\tau_3+\cdots.\\
\end{aligned}
\end{equation}
To show this, start by setting $u=1$ in 
\eqref{hat-W-product}, giving

\begin{align}
\label{kappa-intermediate-expression}
\kappa &= 
\widehat{\Lie}(1)
&=&
\exp\left( - \sum_{m \geq 1} \frac{p_m}{m} \right)
\prod_{\ell \geq 1} 
   \left( 
     1 - p_\ell 
   \right)^{-a_{\ell}(1)}
&=
  \frac{1}{1-p_1} \sum_{k \geq 0} (-1)^k e_k\\
\label{nu-intermediate-expression}
\nu
&=\widehat{\WH}(1)
&=&
\exp\left( - \sum_{m \geq 1} \frac{p_m}{m} \right)
\prod_{\ell \geq 1} 
   \left( 
     1 + (-1)^\ell p_\ell 
   \right)^{a_{\ell}(-1)}
&=
  \frac{1 + p_2}{1-p_1} \sum_{k \geq 0} (-1)^k e_k
\end{align}
The last equality on each line applied
the following consequence of  \eqref{e-to-p-identity} at $u=-1$
$$
\exp \left( - \sum_{m \geq 1} \frac{p_m}{m} \right)
=1-e_1+e_2-e_3+\cdots=\sum_{k \geq 0} (-1)^k e_k,
$$
along with these M\"obius function calculations:
$$
\begin{aligned}
a_\ell(1)
&=\frac{1}{\ell} \sum_{d | \ell} \mu(d) 
= \begin{cases} 
+1 & \text{if }\ell=1,\\
0 &\text{if }\ell\geq 2. 
\end{cases}
\\
a_\ell(-1)
&=\frac{1}{\ell} \sum_{d | \ell} \mu(d) (-1)^{\frac{\ell}{d}}
=\frac{1}{\ell} \left( \sum_{\substack{d | \ell\\ \frac{\ell}{d}\text{ even }}} \mu(d) 
  -\sum_{\substack{d | \ell\\ \frac{\ell}{d}\text{ odd }}} \mu(d) \right) 
=\begin{cases}
-1&\text{if }\ell=1,\\
+1&\text{if }\ell=2,\\
0 &\text{if }\ell\geq 3.
\end{cases}
\end{aligned}
$$
Then \eqref{kappa-intermediate-expression} can be rewritten
$$
(1-p_1) \kappa =1-e_1+e_2-e_3+\cdots
$$
which is equivalent to the first equation 
in \eqref{symm-fn-form-of-derangement-recurrence}.
Meanwhile \eqref{nu-intermediate-expression} can be rewritten
\begin{equation}
\label{rewriting-of-nu-intermediate-expression}
(1-p_1) \nu 
=(1-e_1+e_2-e_3+\cdots) (1+p_2)
=1 - e_1 + \sum_{n \geq 2} (-1)^n (e_n + p_2 e_{n-2}).
\end{equation}
The identity \eqref{dual-Jacobi-Trudi} lets one identify
the far right terms $e_n + p_2 e_{n-2}$ as $\tau_n$ 
for $n \geq 4$:
\begin{equation}
\label{tau-in-terms-of-p-and-e}
\begin{aligned}
\tau_n = s_{(3,1^{n-3})} - s_{(2,2,1^{n-4})}  
& = 
\det\left[
\begin{matrix} 
e_{n-2} & e_{n-1} & e_n \\
1 & e_1 & e_2 \\
0 & 1 & e_1 
\end{matrix}
\right]
-   
\det\left[
\begin{matrix} 
e_{n-2} & e_{n-1} \\
e_1 & e_2
\end{matrix}
\right] \\
 &= e_n + (e_1^2-2e_2)e_{n-2} 
 = e_n + p_2 e_{n-2}.
\end{aligned}
\end{equation}
But one also has $\tau_2=h_2=e_2+p_2$ and $\tau_1=h_1=e_1$,
so \eqref{rewriting-of-nu-intermediate-expression}
becomes the following identity, equivalent to the second
equation in \eqref{symm-fn-form-of-derangement-recurrence}:
$$
(1-p_1) \nu = 1-\tau_1+\tau_2-\tau_3+\tau_4 -\cdots \qedhere 
$$
\end{proof}

\begin{remark}
The authors thank S. Sam for pointing out
the following more uniform rephrasing of the definition for the 
symmetric function $\tau_n$.  One has 
$$
\tau_n=\omega\left( s_{n-2,1,1} - s_{n-2,2} \right),
$$
for {\it all} $n \geq 0$, not just $n \geq 4$, if
one broadens the definition of the Schur
function $s_\alpha$ to $\alpha$ in $\ZZ^\ell$
in a standard way via the Jacobi-Trudi determinant:
$$
s_\alpha:=\det \left( h_{\alpha_i-i+j} \right)_{i,j=1}^\ell
\quad \text{ where }h_0:=1\text{ and }h_i:=0\text{ for }i < 0.
$$
See, e.g., Tamvakis \cite[\S 2.2, 3.5]{Tamvakis}.
This convention is consistent with
Bott's vanishing theorem for cohomology of line bundles on flag manifolds
(see, e.g., Weyman \cite[Cor. 4.1.7]{Weyman}): setting
$\rho:=(\ell-1,\ell-2,\ldots,1,0)$, then $s_\alpha=0$ 
unless there is a partition $\lambda$ 
and (unique) $w$ in $S_\ell$ with
$\alpha+\rho=w(\lambda+\rho)$,
in which case $s_\alpha=\epsilon(w) s_\lambda$.  
\end{remark}

\begin{remark}
\label{first-other-derangement-remark}
As mentioned in the introduction, 
D\'esarm\'enien and Wachs \cite{DesarmenienWachs} first
studied the symmetric function denoted $\kappa_n$ which appears 
in the above proof.
It was later noted by Reiner and Webb \cite[Thm. 2.4]{WebbR}
that $\omega(\kappa_n)$ is the 
Frobenius characteristic of the $S_n$-representation
on the homology $H_n(M)$ of the {\it complex of injective words}.
They noted \cite[Prop. 2.2]{WebbR} that it satisfies 
the following 
recurrence equivalent to \eqref{DesWachs-derangement-recurrence}:
$$
\ch(H_n(M))=p_1 \ch(H_{n-1}(M))+(-1)^n h_n.
$$

Hanlon and Hersh \cite[Theorem 2.3]{Hanlon-Hersh2} used 
the {\it Eulerian idempotents} in $\QQ S_n$ to further decompose
the homology $H_n(M)$ of the complex of injective words into a 
so-called {\it Hodge decomposition} 
$
H_n(M)=\bigoplus_{i=1}^n H_n^{(i)}(M)).
$
The summands $H_n^{(i)}(M)$ 
are $S_n$-representations having degree equal to the number of derangements 
in $S_n$ with $i$ cycles.  In fact, one can prove an isomorphism
$
H_n^{(n-i)}(M)
\cong 
\epsilon_n \otimes \widehat{\Lie}^{n-i}_n 
$
by comparing their formula \cite[bottom of p. 118]{Hanlon-Hersh2}  
$$
\sum_i \ch(H_n^{(i)}(M)) u^i 
 = \exp\left(\sum_{m \geq 1} \frac{p_m(-u)^m}{m} \right)
    \prod_{\ell \geq 1} (1+(-1)^\ell p_\ell)^{-a_\ell(u)}
$$
with the product formula \eqref{hat-W-product}, 
and using $\omega(p_m)=(-1)^{m-1} p_m$.
\end{remark}

\begin{remark}
To further tighten the analogy between recurrences 
\eqref{DesWachs-derangement-recurrence}
and 
\eqref{Whitney-derangement-recurrence},
note that the sequence of symmetric functions $\{\tau_n\}$ in 
Theorem~\ref{whole-row-inductive-description} share the following property
with $\{e_n\}$ (or $\{h_n\}$):  one has 
$
\frac{\partial}{\partial p_1} \tau_n = \tau_{n-1},
$
using, for example, the expression 
$
\tau_n = e_n + p_2 e_{n-2}
$
appearing in \eqref{tau-in-terms-of-p-and-e}.
In particular, their corresponding 
virtual $S_n$-representations $T_n:=\ch^{-1}(\tau_n)$
satisfy $T_n\downarrow^{S_n}_{S_{n-1}} = T_{n-1}$, 
and they all have (virtual) degree $1$.
\end{remark}

\section{Proof of Theorem~\ref{Whitney-generating-tableaux-theorem}}
\label{Whitney-generating-section}

We next use Theorem~\ref{whole-row-inductive-description}
to derive an explicit irreducible expansion for $\widehat{\WH}_n$.
An analogous expansion is already known for the
D\'esarm\'enien-Wachs derangement symmetric function $\kappa_n$
and the homology $H_n(M)$ of the complex of injective words discussed in
Remark~\ref{first-other-derangement-remark}.
These expansions involve the notions of tableaux and ascents, 
which we now recall.

\begin{defn}
\label{tableau-definition}
A {\it standard Young tableau} $Q$ of shape $\lambda$ with $|\lambda|=n$ is
a filling of the cells of the Ferrers diagram of $\lambda$ with 
$\{1,2,\ldots,n\}$ bijectively, increasing left-to-right in rows, 
and top-to-bottom in columns.
Call $i$ an {\it ascent} of $Q$ 
if $i+1$ lies in a weakly higher row than $i$ in $Q$, or
if $i=n$ the size\footnote{For this convention, it helps to imagine $Q$ extended by entries $n+1,n+2,...$ at the end of its first row.} of $Q$.
\end{defn}

\begin{ex}
$$
Q=\stableau{1&3&6&8\\
            2&4&7\\
            5}
$$
is a standard Young tableau of shape $\lambda=(4,3,1)$ having ascents $\{2,5,7,8\}$
\end{ex}

\begin{defn}
A {\it desarrangement tableau} is a standard 
tableau $Q$ with even first ascent\footnote{Wachs dubbed the permutations $w$ having even first ascent {\it desarrangements}.   These are the permutations whose
Robinson-Schensted recording tableau $Q$ is a desarrangement tableau as defined here.}.
\end{defn}

\begin{defn}
A {\it Whitney-generating tableau} is a standard tableau $Q$ that either
has size $n \leq 3$ and one of the following forms
$$
Q=\varnothing, \quad
Q=\stableau{1&2\\}, \quad
Q=\stableau{1&2\\
            3&\\},
$$
or has the restriction $Q|_{\{1,2,3,4\}}$ to its first four values taking
one of the following forms
$$
\left\{ \,\,
T_1=\stableau{1&2\\3& \\4&} \,\, , \quad
T_2=\stableau{1&2&4\\
          3& &\\} \,\, , \quad
T_3=\stableau{1&2\\
          3&4\\} \,\, , \quad
T_4=\stableau{1&2&3\\
          4& &\\}
\,\, \right\}
$$
with the following {\bf further restrictions} in the cases $Q|_{\{1,2,3,4\}}=T_3, T_4$:
\begin{enumerate}
\item[(a)]
If  $Q|_{\{1,2,3,4\}}=T_3$ then
the first ascent\footnote{Recall from 
Definition~\ref{tableau-definition} that $n$ is always an ascent of $Q$,
so this first ascent exists.} 
$k \geq 4$ is odd, i.e., $Q$ contains the entries shown below for some odd $k \geq 5$:
$$
\tiny
\tableau{ \mathbf{1} & \mathbf{2}  &\dots&{\ } \\
\mathbf{3} &\mathbf{4} & \cdots &{\ }  \\
5 &\cdots &k\! +\! 1&{\ }  \\
6 &{\ } &  \\
\vdots &{\ } &  \\
k\! -\! 1& & \\
k& & }
$$
In particular, $Q \neq T_3$ itself.
\item[(b)]
If  $Q|_{\{1,2,3,4\}}=T_4$ then
the first ascent\footnote{As in the previous footnote, this first ascent exists.}  
$k \geq 4$ is even, i.e., $Q$ contains the entries shown below for some even $k\geq 4$:
$$
\tiny
\tableau{
 \mathbf{1} & \mathbf{2} & \mathbf{3} &\dots&{\ } \\
4 &\dots &{\ }&{\ }  \\
5 &\dots &k\! +\! 1&{\ }  \\
\vdots &\ddots &{\ }  \\
k\! -\! 1& & \\
k& & }
$$
\end{enumerate}
\end{defn}

\vskip.1in
\noindent
{\bf Theorem~\ref{Whitney-generating-tableaux-theorem}.}
{\it 
One has the following irreducible decompositions 
\begin{align*}
\widehat{\Lie}_n&=\sum_Q \chi^{\shape(Q)} \\
\widehat{\WH}_n&=\sum_Q \chi^{\shape(Q)}
\end{align*}
in which the sums in \eqref{desarrangement-expansion}, 
\eqref{Whitney-generating-expansion}, respectively, range over
the set of desarrangement tableaux, Whitney-generating tableaux  
$Q$ of size $n$.
}
\vskip.1in

\noindent
That is, the desarrangement (resp. Whitney-generating) 
tableaux predict the 
sum across each row of Table~\eqref{Lie-hat-table} 
(resp. Table~\eqref{Wiltshire-Gordon-table}).
Here are both kinds of tableaux up to size $n=5$, for comparison
to Tables~\eqref{Lie-hat-table} and \eqref{Wiltshire-Gordon-table}:
\vskip.1in

\begin{tabular}{|c|l|l|}\hline
$n$ & Desarrangement tableaux of size $n$ 
    & Whitney-generating tableaux of size $n$ \\ \hline\hline
$0$ & $\varnothing$ & $\varnothing$\\ \hline
$1$ &  &   \\ \hline
 &  &  \\
$2$ & $\stableau{1\\2}$ & $\stableau{1&2}$\\ 
 &  & \\ \hline
 &  &  \\ 
$3$ & 
 $\stableau{ 1&3 \\ 2&}$ &  $\stableau{ 1&2 \\ 3&}$
\\ 
 &  &  \\ \hline
 &  & \\ 
$4$ &     
 $\stableau{ 1&3 \\ 2&4}$ \quad
 $\stableau{ 1\\2\\3\\4}$ \quad
 $\stableau{ 1&3&4 \\ 2& &  }$ \quad
 $\stableau{ 1&3 \\ 2&\\ 4&}$ 
   &     
 $\stableau{ 1&2 \\ 3& \\4& }$ \quad
 $\stableau{ 1&2&3 \\ 4& &  }$ \quad
 $\stableau{ 1&2&4 \\ 3& &}$ 
\\ 
 &  & \\ \hline
 &  & \\ 
$5$ &     
 $\stableau{ 1&3 \\ 2& \\4& \\5& }$ \quad
 $\stableau{ 1&3 \\ 2&4 \\5& }$ \quad
 $\stableau{ 1&3&4 \\ 2&& \\5&& }$ \quad
 $\stableau{ 1&3&4 \\ 2&5& }$ \quad
 &     
 $\stableau{ 1&2 \\ 3& \\4& \\5& }$ \quad
 $\stableau{ 1&2 \\ 3&5 \\4& }$ \quad
 $\stableau{ 1&2&5 \\ 3&& \\4&& }$ \quad
 $\stableau{ 1&2& \\ 3&4& \\5& &  }$ \quad
 $\stableau{ 1&2&3 \\ 4&5&  }$
\\
   & & \\
   & 
 $\stableau{ 1&3&4&5 \\ 2& & &}$ \quad
 $\stableau{ 1&3&5 \\ 2&4& }$ \quad
 $\stableau{ 1&3&5 \\ 2& & \\ 4& & }$ \quad
 $\stableau{ 1&3 \\ 2&5 \\ 4&}$ \quad
 $\stableau{ 1&5 \\ 2& \\ 3& \\ 4&}$ 
   & 
 $\stableau{ 1&2&3&5 \\ 4& &  }$ \quad
 $\stableau{ 1&2&4 \\ 3& & \\ 5& & }$ \quad
 $\stableau{ 1&2&4 \\ 3&5&}$ \quad
 $\stableau{ 1&2&4&5 \\ 3& &}$
\\ 
 &  &  \\ \hline
\end{tabular}

\begin{proof}[Proof of Theorem~\ref{Whitney-generating-tableaux-theorem}.]
The theorem is equivalent to the following expansions for
the symmetric functions $\kappa_n,\nu_n$ defined in \eqref{kappa-nu-definition}:

\begin{align}
\label{desarrangement-symmetric-function-sum}
\kappa_n &=\sum_Q s_{\shape(Q)} \\
\label{Whitney-generating-symmetric-function-sum}
\nu_n &=\sum_Q s_{\shape(Q)}
\end{align}
with the sums ranging over the desarrangement and
Whitney-generating tableaux $Q$ of size $n$, respectively.

It was shown by D\'esarm\'enien and Wachs \cite{DesarmenienWachs} and 
by Reiner and Webb \cite[Prop. 2.3]{WebbR} that
\begin{equation}
\label{desarrangement-tableau-model}
\begin{aligned}
\kappa_n&=\sum_Q s_{\shape(Q)},\text{ or equivalently,}\\
\ch(H_n(M))&=\sum_Q s_{\shape(Q)^t},
\end{aligned}
\end{equation}
where $Q$ runs over all standard Young tableaux of size $n$ 
whose first ascent is even.  Thus it only remains to prove the 
analogous expansion for $\widehat{\WH}_n$.

Let $\tilde{\nu}_n$ be the sum on the right in
\eqref{Whitney-generating-symmetric-function-sum}.
We will check that $\tilde{\nu}_n=\nu_n$ by induction on $n$.  
The base cases where $n \leq 4$ are easily checked.  In the inductive step for $n \geq 5$, 
one need only check that $\tilde{\nu}_n$ satisfies the recurrence 
from Theorem~\ref{whole-row-inductive-description}, that is
\begin{equation}
\label{row-recurrence-with-tildes}
\tilde{\nu}_n = p_1 \tilde{\nu}_{n-1} + (-1)^n \tau_n.
\end{equation}
By the special case of the {\it Pieri rule} \eqref{Pieri-rule} for multiplying a Schur function $s_\lambda$ by $p_1(=s_{(1)})$, one wants to show that if one adds a new entry $n$ to all the Whitney-generating tableaux of size $n-1$, in all possible corner cell locations, one obtains a set of tableaux (call it $\TTT_n$)
that {\it almost} contains exactly one copy of each Whitney-generating tableaux of size $n$.  
The exceptions come from considering
these two families of tableaux, $A(n)$ for $n \geq 3$, and $B(n)$ for $n \geq 4$:
$$
\tiny
A(n)
:=
\tableau{
1 & 2 & 3\\
4 & &  \\
5 & & \\
\vdots \\
n\!-\! 1& & \\
n& & 
}
\qquad \qquad
B(n):=\tableau{
1 & 2  \\
3 & 4  \\
5 & \\
\vdots & \\
n\!-\! 1& \\
n& 
}.
$$
Note that 
$A(n)|_{\{1,2,3,4\}}=T_4$, that $B(n)|_{\{1,2,3,4\}}=T_3$,
and that $s_{\shape(A(n))}-s_{\shape(B(n))}=\tau_n$.  
We explain here why
the $(-1)^n\tau_n$ term in the theorem
exactly accounts for the discrepancy resulting from these exceptions.  


First assume $n$ is even and at least $4$.  
Then $B(n-1)$ is Whitney-generating, 
but adding $n$ to the bottom of its first column produces $B(n)$ which is
not Whitney-generating.
However, removing $B(n)$ from the set $\TTT_n$ and 
replacing it with $A(n)$ produces
a set $\TTT_n \setminus \{B(n)\} \cup \{A(n)\}$ 
that has each Whitney-generating tableau of size $n$ exactly once.
This replacement models adding $\tau_n$.

Next assume $n$ is odd and at least $5$.  Then $A(n-1)$ is Whitney-generating, 
but adding $n$ to the bottom of the  first column of $A(n-1)$
produces $A(n)$ which is
not Whitney-generating.  Similarly to the previous case,
removing $A(n)$  from the set $\TTT_n$ and 
replacing it with $B(n)$ produces
a set $\TTT_n \setminus \{A(n)\} \cup \{B(n)\}$ 
that has each Whitney-generating tableau of size $n$ exactly once.
This replacement models subtracting $\tau_n$.

This shows that $\tilde{\nu}_n$ satisfies the recurrence
\eqref{row-recurrence-with-tildes}, completing the proof of the theorem.
\end{proof}


\section{Proof of Theorem~\ref{Wiltshire-Gordon-Conjecture2}} 
\label{WG2-section}

Recall the statement of the theorem.

\vskip.1in
\noindent
{\bf Theorem~\ref{Wiltshire-Gordon-Conjecture2}.}
{\it
For $n \geq 2$ and $i \geq 1$, one has an 
isomorphism of $S_{n-1}$-representations
\begin{align*}
\widehat{\Lie}^{i}_{n} \downarrow
&\cong
\left( \widehat{\Lie}^{i-1}_{n-1} \downarrow 
\quad \oplus \quad 
\widehat{\Lie}^{i-1}_{n-2} \right) \uparrow,\\
\widehat{\WH}^{i}_{n} \downarrow
&\cong
\left( \widehat{\WH}^{i-1}_{n-1} \downarrow 
\quad \oplus \quad 
\widehat{\WH}^{i-1}_{n-2} \right) \uparrow,
\end{align*}
where 
$\uparrow$ and $\downarrow$ are
induction $(-)\uparrow_{S_n}^{S_{n+1}}$,  
restriction $(-)\downarrow_{S_{n-1}}^{S_{n}}$ 
applied to $S_n$-representations.
}
\vskip.1in

Recall from \eqref{induction-restriction-in-power-sums} 
that $\downarrow, \uparrow$ correspond via the Frobenius map $\ch$
to the operations of $\frac{\partial}{\partial p_1}$ and 
multiplying by $p_1$ on
symmetric functions.   We will prove Theorem~\ref{Wiltshire-Gordon-Conjecture2}
therefore, by applying $\frac{\partial}{\partial p_1}$ to 
\eqref{hat-L-product}, \eqref{hat-W-product}.
To this end, extend $\frac{\partial}{\partial p_1}$ as an operator on $\Lambda$
to one on $\Lambda[[u]]$ via 
$$
\frac{\partial}{\partial p_1} \sum_n f_n u^n 
  := \sum_n \left( \frac{\partial}{\partial p_1}f_n\right) u^n.
$$

\begin{proof}[Proof of Conjecture~\ref{Wiltshire-Gordon-Conjecture2}.]
We give the proof for the second recurrence in the theorem
by applying $\frac{\partial}{\partial p_1}$ to $\widehat{\WH}(u)$;
the proof of the first recurrence is exactly the same using
$\widehat{\Lie}(u)$ instead.

Recall that \eqref{hat-W-product} factors
$\widehat{\WH}(u)= H(u)^{-1} W(u)$
where 
$$
\begin{array}{rclcl}
H(u)^{-1} &=&  \exp\left( - \sum_{m \geq 1} \frac{p_m u^m}{m} \right) 
          &=&\exp(-p_1 u) \cdot \exp\left( - \sum_{m \geq 2} \frac{p_m u^m}{m} \right)\\
W(u) &=& \prod_{\ell \geq 1} 
   \left( 
     1 + (-1)^\ell p_\ell 
   \right)^{a_\ell(-u)}
&=& (1-p_1)^{-u} \cdot
   \prod_{\ell \geq 2} 
   \left( 
     1 + (-1)^\ell p_\ell 
   \right)^{a_{\ell}(-u)}
\end{array}
$$
These expressions show that
$$
\begin{aligned}
\frac{\partial  H(u)^{-1}}{\partial p_1} 
  &= -u\cdot H(u)^{-1},\\
\frac{\partial W(u)}{\partial p_1} 
 &= \frac{u}{1-p_1} \cdot W(u),
\end{aligned}
$$
and hence by the Leibniz rule applied to $\widehat{\WH}(u)= H(u)^{-1} W(u)$ one has
$$
\begin{aligned}
\frac{\partial \widehat{\WH}(u)}{\partial p_1}  
 &= H(u)^{-1} \frac{\partial W(u) }{\partial p_1}  
      + \frac{\partial  H(u)^{-1}}{\partial p_1} W(u) \\
 &=  \frac{u}{1-p_1}\cdot H(u)^{-1}W(u) -u\cdot H(u)^{-1}W(u) \\
 &= \frac{up_1}{1-p_1}\widehat{\WH}(u).
\end{aligned}
$$
From here, an easy algebraic manipulation reformulates this as follows:
\begin{equation}
\label{useful-form}
\frac{\partial \widehat{\WH}(u)}{\partial p_1}  
= p_1 \frac{\partial \widehat{\WH}(u)}{\partial p_1} + up_1 \widehat{\WH}(u).
\end{equation}
This is an identity in $\Lambda[[u]]$.  Extracting terms of appropriate degree from (\ref{useful-form}),  
that is, taking the 
$\Lambda_n$ homogeneous component within the coefficient of
$u^{n+1-i}$, yields
\begin{equation}
\label{PDE-version-of-W-G-Conj-2}
\frac{\partial}{\partial p_1} \ch(\widehat{\WH}^i_{n+1}) 
= p_1 \cdot \frac{\partial}{\partial p_1} \ch(\widehat{\WH}^{i-1}_{n})
+ p_1 \cdot \ch(\widehat{\WH}^{i-1}_{n-1}).
\end{equation}
which is equivalent to the assertion of the theorem via
\eqref{induction-restriction-in-power-sums}.
\end{proof}

\section{Proof of Theorem~\ref{W-G-conj-2-euler-char}}
\label{WG1-section} 

Recall the statement of the theorem.

\vskip.1in
\noindent
{\bf Theorem~\ref{W-G-conj-2-euler-char}.}
{\it 
As virtual characters, for $n \geq 2$ one has
$$
\sum_{i \geq 0} (-1)^i \widehat{\WH}^i_n = (-1)^{n-1} \chi^{(2,1^{n-2})}.
$$
}
\vskip.1in

\begin{proof}
Setting
$u=-1$ in Corollary~\ref{hat-W-product}, and noting 
$a_\ell(1)=\frac{1}{\ell}\sum_{d | \ell} \mu(d) = 0$ for $\ell \geq 2$ gives
$$
\begin{aligned}
-\widehat{\WH}(-1)
=\sum_{n \geq 0} \left( \sum_{k \geq 0} \ch(\widehat{\WH}^k_n) (-1)^{n-1-k} \right)
&=
-\exp\left( - \sum_{m \geq 1} \frac{p_m (-1)^m}{m} \right)
\prod_{\ell \geq 1} 
   \left( 
     1 + (-1)^\ell p_\ell 
   \right)^{a_\ell(1)} \\
&=
-\exp\left( \sum_{m \geq 1} \frac{(-1)^{m-1} p_m}{m} \right)
(1-p_1).
\end{aligned}
$$
Applying
\eqref{e-to-p-identity} at $u=1$, and
noting that $p_1=e_1$, 
this last expression equals
$$
\begin{aligned}
(1+e_1+e_2+\cdots) (e_1-1) 
&=-1 + (e_1-e_1) + (e_1 e_1 -e_2) + (e_1 e_2-e_3) + (e_1 e_3 - e_4) + \cdots \\
&= -1 + \sum_{n \geq 2} s_{(2,1^{n-2})},
\end{aligned}
$$
where the last step applied \eqref{dual-Jacobi-Trudi}
to rewrite $e_1 e_{n-1}-e_n=s_{(2,1^{n-2})}$
for $n \geq 2$.
\end{proof}

In addition to Theorem~\ref{W-G-conj-2-euler-char},
we point out a simple fact about the 
$S_n$-characters $\{\widehat{\WH}_n^i\}_{i=1}^{n-1}$
closely related to Conjecture~\ref{Wiltshire-Gordon-Conjecture1};
it follows, for example, from Sundaram \cite[Cor. 2.3(ii)]{Sundaram}.

\begin{prop}
\label{only-one-copy-prop}
For $n \geq 2$ one has 
$$
\langle \, \chi^{(2,1^{n-2})} \, , \, \widehat{\WH}_n^i \, \rangle =
\begin{cases}
0 & \text{ for }0 \leq i \leq n-2,\\
1 & \text{ for }i=n-1.
\end{cases}
$$
Thus any cochain complex $(\widehat{\WH}_n^\bullet,d)$ would have one copy of $\chi^{(2,1^{n-2})}$ in the homology 
$H^{n-1}(\widehat{\WH}_n^\bullet)$.
\end{prop}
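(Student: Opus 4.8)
The plan is to read off the multiplicities $\langle \chi^{(2,1^{n-2})}, \widehat{\WH}^i_n \rangle$ from the product formula for $\widehat{\WH}(u)$ in Corollary~\ref{hat-product-formulas} via a single symmetric-function specialization. First I would rewrite the pairing in terms of sign-character multiplicities: the Pieri rule gives $h_1 e_{n-1} = s_{(2,1^{n-2})} + s_{(1^n)}$, so $s_{(2,1^{n-2})} = h_1 e_{n-1} - e_n$, and combining this with the adjointness of multiplication by $p_1\,(=h_1)$ to $\partial/\partial p_1$ under the Hall inner product (which is just Frobenius reciprocity together with \eqref{induction-restriction-in-power-sums}) yields
\begin{equation*}
\big\langle \chi^{(2,1^{n-2})}, \widehat{\WH}^i_n \big\rangle
  = \Big\langle e_{n-1},\ \tfrac{\partial}{\partial p_1}\ch(\widehat{\WH}^i_n) \Big\rangle
  \ -\ \big\langle e_n,\ \ch(\widehat{\WH}^i_n) \big\rangle .
\end{equation*}
Thus it suffices to track the coefficient of $e_m$ (equivalently of $s_{(1^m)}$, i.e.\ the multiplicity of the sign character) in $\ch(\widehat{\WH}^i_n)$ and in $\ch(\widehat{\WH}^i_n\!\downarrow)$.

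Second, I would use the elementary fact that for $g \in \Lambda_m$ one has $g\big|_{p_k \mapsto (-1)^{k-1}t^k} = \langle e_m, g\rangle\, t^m$: indeed $\langle e_m, g\rangle = \langle h_m, \omega g\rangle$, and since $\langle h_m, p_\lambda\rangle = 1$ for every $\lambda \vdash m$ by \eqref{Girard-Newton-identity}, pairing with $h_m$ is the operation of setting all $p_k = 1$, which combined with $\omega p_k = (-1)^{k-1}p_k$ gives the claim (the variable $t$ only recording degree). Applying this term by term to $\widehat{\WH}(u) = \sum_{n,i}\ch(\widehat{\WH}^i_n)u^{n-i}$ and to $\tfrac{\partial}{\partial p_1}\widehat{\WH}(u) = \sum_{n,i}\ch(\widehat{\WH}^i_n\!\downarrow)u^{n-i}$ reduces the whole proposition to evaluating the two series in $\QQ[[t,u]]$ obtained by substituting $p_k \mapsto (-1)^{k-1}t^k$ into these two generating functions.

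Third, I would carry out the two specializations. In the product formula $\widehat{\WH}(u) = \exp\!\big(-\sum_{m \ge 1}\tfrac{p_m u^m}{m}\big)\prod_{\ell \ge 1}(1 + (-1)^\ell p_\ell)^{a_\ell(-u)}$ the substitution turns the exponential into $\exp(-\log(1+tu)) = (1+tu)^{-1}$ and turns each factor $1 + (-1)^\ell p_\ell$ into $1 - t^\ell$; the cyclotomic (Witt) identity $\prod_{\ell \ge 1}(1 - t^\ell)^{a_\ell(y)} = 1 - yt$ — which one may also extract from the $L(u)$ product formula of Theorem~\ref{product-generating-functions} by setting $p_\ell \mapsto t^\ell$, since $\Lie^i_n$ contains the trivial character exactly when $i = 0$ — then gives $\prod_\ell (1 - t^\ell)^{a_\ell(-u)} = 1 + ut$, so $\widehat{\WH}(u)\big|_{p_k \mapsto (-1)^{k-1}t^k} = (1+tu)^{-1}(1+ut) = 1$. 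In particular the sign character never occurs in $\widehat{\WH}^i_n$ for $n \ge 1$, which kills the $\langle e_n, \ch(\widehat{\WH}^i_n)\rangle$ term above. For the other term I would invoke the identity $\tfrac{\partial}{\partial p_1}\widehat{\WH}(u) = \tfrac{u p_1}{1 - p_1}\widehat{\WH}(u)$ already established in the proof of Theorem~\ref{Wiltshire-Gordon-Conjecture2}; specializing and using the computation just made gives $\tfrac{\partial}{\partial p_1}\widehat{\WH}(u)\big|_{p_k \mapsto (-1)^{k-1}t^k} = \tfrac{ut}{1-t} = \sum_{j \ge 1} u\,t^{j}$. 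Extracting the coefficient of $t^{n-1}u^{n-i}$ shows it is $1$ precisely when $n - i = 1$ and $n \ge 2$ and $0$ otherwise, which is exactly $\langle \chi^{(2,1^{n-2})}, \widehat{\WH}^i_n\rangle = 1$ for $i = n-1$, $n \ge 2$ and $0$ for $0 \le i \le n-2$; the last sentence of the proposition is then immediate, since the $\chi^{(2,1^{n-2})}$-isotypic part of any cochain complex $(\widehat{\WH}_n^\bullet, d)$ is concentrated in cohomological degree $n-1$.

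The computations are routine once the setup is in place, so there is no serious obstacle; the one point requiring care is the bookkeeping of the two gradings — the ``cohomological'' degree recorded by the exponent $n - i$ of $u$ versus the symmetric-function degree recorded by $t$ after specialization — together with a quick sanity check of the small cases $n = 0, 1$ against the conventions $\widehat{\WH}_0 = \one_{S_0}$, $\widehat{\WH}_1 = 0$. All the substantive ingredients (Corollary~\ref{hat-product-formulas}, the derivative identity from the proof of Theorem~\ref{Wiltshire-Gordon-Conjecture2}, and the Witt identity) are already available, so no new estimate is needed.
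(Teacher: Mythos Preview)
Your argument is correct. The decomposition $s_{(2,1^{n-2})}=p_1 e_{n-1}-e_n$ together with the adjunction $\langle p_1 f,g\rangle=\langle f,\partial g/\partial p_1\rangle$ reduces the question to sign-character multiplicities, and your two specializations are computed correctly: the ring homomorphism $p_k\mapsto(-1)^{k-1}t^k$ sends $\widehat{\WH}(u)$ to $(1+tu)^{-1}\prod_\ell(1-t^\ell)^{a_\ell(-u)}=(1+tu)^{-1}(1+ut)=1$, and sends $\partial\widehat{\WH}(u)/\partial p_1=\tfrac{up_1}{1-p_1}\widehat{\WH}(u)$ to $ut/(1-t)$. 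Extracting the coefficient of $t^{n-1}u^{n-i}$ is unambiguous since the pair $(n-1,n-i)$ determines $(n,i)$, and you recover the stated multiplicities. The Witt identity $\prod_\ell(1-t^\ell)^{a_\ell(y)}=1-yt$ is classical and can be checked directly by taking logarithms, so you need not lean on the $\Lie^i_n$ interpretation (whose justification, that $\langle\one_{S_n},\Lie_\lambda\rangle=\delta_{\lambda,(1^n)}$, is true but deserves a sentence if you keep it).

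The paper itself does not prove this proposition; it simply cites Sundaram \cite[Cor.~2.3(ii)]{Sundaram}, where hook and near-hook multiplicities in the Whitney homology of $\Pi_n$ are worked out in greater generality. Your route is genuinely different: it is entirely internal to the paper's own machinery (Corollary~\ref{hat-product-formulas} and the derivative identity from the proof of Theorem~\ref{Wiltshire-Gordon-Conjecture2}), and it is pleasantly uniform in that both multiplicities fall out of a single specialization rather than separate bookkeeping. Sundaram's approach buys a broader table of multiplicities; yours buys self-containment and exhibits the result as a direct consequence of the generating-function identities already established here.
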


This unique copy of $\chi^{(2,1^{n-2})}$ inside $\pi_n$
predicted by Proposition~\ref{only-one-copy-prop}
is distinguished in at least two ways.  
 On one hand it is the top filtration factor in Reutenauer's derived series for the free Lie algebra,
as discussed in Reutenauer \cite{Reutenauer-derived} and Sundaram and Wachs \cite[p. 951]{SundaramWachs}.

On the other hand, Lehrer and Solomon \cite{LehrerSolomon} model $WH_i(\Pi_n)$
via the {\it Orlik-Solomon algebra} of type $A_{n-1}$, that is, 
the quotient $A(n)=E/I$ of an exterior algebra $E$ on  generators
$\{ a_{ij} \}_{1 \leq i < j \leq n}$, by the ideal $I$ having generators
$$
a_{ij}a_{ik}-a_{ij}a_{jk}+a_{ik}a_{jk}=0 \quad  \text{ for }1\leq i<j<k \leq n.
$$
This gives a skew-commutative graded algebra $A=\bigoplus_{i=0}^{n-1} A^i$,
carrying an $S_n$-representation defined by $w(e_{ij})=e_{w(i),w(j)}$,
and for which 
$WH_i(\Pi_n) \cong A^i.$
In particular, $A^{n-1} \cong \pi_n$.
It is then not hard to show that the images of these $n$ monomials 
\begin{equation}
\label{star-tree-monomials}
m^{(i)}:= a_{1,i} a_{2,i} \cdots a_{i-1,i} a_{i,i+1}, 
          a_{i,i+2} \cdots a_{i,n-1} a_{i,n} \text{ for }1\leq i \leq n,
\end{equation}
satisfy a single relation $\sum_{i=1}^n (-1)^i m^{(i)}=0$,
and span an $(n-1)$-dimensional $S_n$-stable
subspace of $A^{n-1}$, carrying the unique copy of $\chi^{(2,1^{n-2})}$
predicted by Proposition~\ref{only-one-copy-prop}.

\section{Proof of Theorem~\ref{beta-stabilization-thm}}
\label{4i-bound}

Recall the statement of the theorem.

\vskip.1in
\noindent
{\bf Theorem~\ref{beta-stabilization-thm}.}
{\it
For a subset $S$ of positive integers with $\max(S)=i$,
the sequence $\beta_S(\Pi_n)$ stabilizes beyond $n=4i$.
Furthermore, when $S=\{i\}$, it stabilizes sharply at $n=4i$.
}
\vskip.1in

\noindent
We break this into two statements,
Theorem~\ref{alpha-beta-stabilization-thm} and 
Proposition~\ref{worst-beta-stabilization-thm} below, 
addressing $\alpha_S, \beta_S$ simultaneously.

\begin{thm}
\label{alpha-beta-stabilization-thm}
For $S \subset \{1,2,\ldots,n-2\}$ with $i=\max(S)$,
both $\{\alpha_S(\Pi_n)\}, \{\beta_S(\Pi_n)\}$ stabilize beyond $n =4i$.
\end{thm}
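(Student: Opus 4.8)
The plan is to put both sequences into the form $\{M_n(\chi)\}$ and quote Lemma~\ref{Hemmer's-lemma}. Since $\alpha$ and $\beta$ are related by the two invertible (Möbius‑type) transformations \eqref{beta-definition} and \eqref{alpha-as-sum-of-betas}, and since ``stabilizes beyond $n_0$'' is preserved under $\ZZ$‑linear combinations of sequences of virtual characters (the operation $\chi \mapsto \chi^{(+1)}$ being linear), it suffices to prove that $\{\alpha_T(\Pi_n)\}$ stabilizes beyond $n=4\max(T)$ for \emph{every} finite set $T$ of positive integers. Applying this to all $T\subseteq S$ (for which $\max(T)\le\max(S)=i$) then settles $\alpha_S$ and $\beta_S$ at once. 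Note also that for $n>4i$ the hypothesis $S\subseteq\{1,\dots,n-2\}$ is automatic, so nothing is lost.

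The structural heart of the argument is that a maximal chain of $\Pi_n^S$ is concentrated on a small support. Given such a chain $\pi_1<\cdots<\pi_k$ (so $\rank(\pi_j)$ runs over $S$, in particular $\rank(\pi_k)=i$), let $A\subseteq\{1,\dots,n\}$ be the union of the non‑singleton blocks of the top element $\pi_k$. Every $\pi_j$ refines $\pi_k$, hence is all singletons outside $A$, so the chain is obtained from a maximal chain of $\Pi_A^S$ whose top element has full support on $A$ by re‑adjoining the singletons $\{x\}$, $x\notin A$. The block sizes of $\pi_k|_A$ form a partition of $m:=|A|$ with no parts of size $1$ and rank $i$, so Proposition~\ref{bounds-on-derangement-size-prop} gives $i+1\le m\le 2i$. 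Keeping track of the $S_n$‑action (the chains supported on a fixed $A$ form an $S_A\times S_{[n]\setminus A}$‑stable set isomorphic to $\gamma_S(\Pi_A)\otimes\one$, and summing over the $S_n$‑orbit of $A$ produces an induction product), one arrives at
$$
\alpha_S(\Pi_n)\;\cong\;\bigoplus_{m=i+1}^{2i} M_n\bigl(\gamma_S(\Pi_m)\bigr),
$$
where $\gamma_S(\Pi_m)$ is the genuine permutation $S_m$‑representation on those maximal chains of $\Pi_m^S$ whose top element has no singleton block (terms with $m>n$ vanish, in accordance with \eqref{M-operator-definition}).

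From here the bound is immediate. Each $\gamma_S(\Pi_m)$ is a genuine character, so in its irreducible expansion $\sum_\mu c_\mu \chi^\mu$ every $\mu$ is a partition of $m$ and therefore satisfies $|\mu|+\mu_1\le 2m\le 4i$; Lemma~\ref{Hemmer's-lemma} then says each $M_n(\gamma_S(\Pi_m))$ stabilizes beyond $n=4i$, hence so does the finite direct sum $\alpha_S(\Pi_n)$. Combined with the reduction in the first paragraph, this proves the theorem, and in fact shows that both $\alpha_S(\Pi_n)$ and $\beta_S(\Pi_n)$ stabilize beyond $n=4\max(S)$.

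I expect the main obstacle to be the careful justification of the displayed decomposition: verifying that ``restrict to the non‑singleton support of the top element'' is a bijection compatible with the $S_n$‑action in precisely the manner that yields $M_n(-)$ — i.e.\ that chains of $\Pi_n^S$ supported on a fixed $A$ form an $S_A\times S_{[n]\setminus A}$‑set isomorphic to $\gamma_S(\Pi_A)\otimes\one_{S_{[n]\setminus A}}$, and that summing over the $S_n$‑orbit of $A$ gives $\bigl(\gamma_S(\Pi_m)\otimes\one_{S_{n-m}}\bigr)\uparrow_{S_m\times S_{n-m}}^{S_n}$. Everything after that is bookkeeping with Lemma~\ref{Hemmer's-lemma} and Proposition~\ref{bounds-on-derangement-size-prop}.
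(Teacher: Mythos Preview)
Your proposal is correct and follows essentially the same route as the paper: reduce to $\alpha_S$ via \eqref{beta-definition}, observe that each chain is supported on the non-singleton blocks of its top element (a set of size $m\le 2i$ by Proposition~\ref{bounds-on-derangement-size-prop}), rewrite $\alpha_S(\Pi_n)$ as a sum of terms $M_n(-)$ on $S_m$-representations, and apply Lemma~\ref{Hemmer's-lemma} with the trivial bound $|\mu|+\mu_1\le 2m\le 4i$. The only cosmetic difference is that the paper works orbit-by-orbit (each orbit contributing $M_n(\one_{G_0}\!\uparrow^{S_{n_0}})$) while you first group the orbits by the support size $m$ into the representations $\gamma_S(\Pi_m)$; these are the same sums, just parenthesized differently.
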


\noindent
See Sundaram \cite[\S 5]{Su2}, as well as 
Hanlon-Hersh \cite[Thm. 2.5]{Hanlon-Hersh}, 
Stanley~\cite[p. 152]{Stanley-aspects}, 
and Sundaram \cite[Rmk. 4.10.2]{Sundaram}, 
for some related stability results on
$\alpha_S(\Pi_n), \beta_S(\Pi_n)$.

\begin{proof}[Proof of Theorem~\ref{alpha-beta-stabilization-thm}.]
Since \eqref{beta-definition} expresses 
$\beta_S(\Pi_n)$ as an alternating sum of $\alpha_T(\Pi_n)$ with
$\max(T) \leq \max(S)$, it suffices  to prove 
the desired stability bound for $\alpha_S(\Pi_n)$ for each $S$.  
Since $\alpha_S(\Pi_n)$ is the $S_n$-permutation representation on the $S_n$-orbits of chains $c$ passing through the rank set $S$, we are further reduced to
understanding each of the transitive coset representations
$\one_G \uparrow_{G}^{S_n}$ where $G:=\Stab_{S_n}(c)$,
and showing that they stabilize beyond $n=4i$.

To this end, choose a representative chain $c$ within each $S_n$-orbit
so that the top element $\pi$ at rank $i$ in $c$ has as the union of
its nonsingleton blocks some initial segment of
$n_0$ elements $\{1,2,\ldots,n_0\}$, 
along with singleton blocks $\{n_0+1\},\{n_0+2\},\ldots,\{n-1\},\{n\}$.
It follows from Proposition~\ref{bounds-on-derangement-size-prop}
that $n_0 \leq 2i$ because $\rank(\pi)=i$.
Restricting $\pi$ to $\{1,2,\ldots,n_0\}$ gives an element
$\pi_0$ within the
subposet $\Pi_{n_0}$,
where here we consider the partition lattices as a tower 
$\Pi_1 \subset \Pi_2 \subset \Pi_3 \subset \cdots$,
with $\Pi_n$ included within $\Pi_{n+1}$ as the subset of partitions
having $\{n+1\}$ as a singleton block.
Then the entire chain $c$ in $\Pi_n$ 
similarly restricts to a chain $c_0$ in $\Pi_{n_0}$, visiting the same
rank set $S$, for which $G=\Stab_{S_n}(c)= G_0\times S_{n-n_0}$ 
where $G_0:=\Stab_{S_{n_0}}(c_0)$.
Hence
$$
\one_G \uparrow_{G}^{S_n}
 \cong M_n(\chi) \text{ where }
\chi:=\one_{G_0} \uparrow_{G_0}^{S_{n_0}}.
$$
As an $S_{n_0}$-character, $\chi$ is trivially 
bounded by $n_0$, and expands into irreducibles
$\chi^{\lambda}$ with $|\lambda|=n_0$.
Hence Lemma~\ref{Hemmer's-lemma} shows 
$M_n(\chi)$ stabilizes beyond $n=n_0+n_0\leq 2\cdot 2i=4i$.
\end{proof}

The next result shows that, in the worst case for $S$, the bound of
Theorem~\ref{beta-stabilization-thm} is tight.

\begin{prop}
\label{worst-beta-stabilization-thm}
For $1 \leq i \leq n-2$,
both $\alpha_{\{i\}}(\Pi_n), \beta_{\{i\}}(\Pi_n)$ 
stabilize sharply at $n =4i$.
\end{prop}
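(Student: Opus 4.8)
The plan is to specialize the decomposition used in the proof of Theorem~\ref{alpha-beta-stabilization-thm} to the singleton rank set $S=\{i\}$. Since a maximal chain of $\Pi_n^{\{i\}}$ is just one element of rank $i$, the $S_n$-orbits on such chains are indexed by the number partitions $\lambda\vdash n$ with $\rank(\lambda)=i$. Writing $\lambda=1^{m_1}\hat\lambda$ with $\hat\lambda$ having no parts of size $1$, and putting $n_0:=|\hat\lambda|$, Proposition~\ref{bounds-on-derangement-size-prop} gives $i+1\le n_0\le 2i$, and the stabilizer of a partition in the corresponding orbit is $G_0\times S_{n-n_0}$, where $G_0\le S_{n_0}$ stabilizes a set partition of $\{1,\dots,n_0\}$ with block-size partition $\hat\lambda=2^{m_2}3^{m_3}\cdots$. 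Hence this orbit contributes $M_n(\chi_{\hat\lambda})$, where $\chi_{\hat\lambda}:=\one_{G_0}\uparrow_{G_0}^{S_{n_0}}$ has $\ch(\chi_{\hat\lambda})=\prod_{j\ge 2}h_{m_j}[h_j]$, and so $\alpha_{\{i\}}(\Pi_n)=\bigoplus_{\hat\lambda}M_n(\chi_{\hat\lambda})$, summed over partitions $\hat\lambda$ of rank $i$ with no parts of size $1$. Theorem~\ref{alpha-beta-stabilization-thm} already gives stabilization beyond $n=4i$, so the task reduces to proving \emph{failure} of stabilization beyond $n=4i-1$.

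For sharpness I would track the multiplicity of $\chi^{(2i,2i)}$. First I would observe that the unique $\hat\lambda$ above with $n_0=2i$ is $\hat\lambda=(2^i)$: indeed $\rank(\hat\lambda)=|\hat\lambda|-\ell(\hat\lambda)=i$ forces $\ell(\hat\lambda)=i$ parts, each at least $2$ and summing to $2i$, hence all equal to $2$. For this orbit $G_0=S_2\wr S_i$ and $\ch(\chi_{(2^i)})=h_i[h_2]$, and the coefficient of $s_{(2i)}$ in $h_i[h_2]$ equals the coefficient of the monomial $x_1^{2i}$, which is $1$; thus $\chi^{(2i)}$ occurs in $\chi_{(2^i)}$ (with multiplicity one). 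Next I would evaluate $\langle\chi^{(2i,2i)},\alpha_{\{i\}}(\Pi_{4i})\rangle$ via Lemma~\ref{refined-Hemmer-lemma} with $n=4i$ and $\nu=(2i)$: a constituent $\chi^\mu$ of some $\chi_{\hat\lambda}$ has $|\mu|=n_0\le 2i$, and $\langle\chi^{(2i,2i)},M_{4i}(\chi^\mu)\rangle\ne 0$ requires $(2i)\subseteq\mu$ together with $4i\ge|\nu|+\mu_1$; the containment forces $\mu=(2i)$, hence $n_0=2i$ and $\hat\lambda=(2^i)$, and the inequality becomes equality. Therefore $\langle\chi^{(2i,2i)},\alpha_{\{i\}}(\Pi_{4i})\rangle=\langle\chi^{(2i)},\chi_{(2^i)}\rangle=1$. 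But if $\{\alpha_{\{i\}}(\Pi_n)\}$ stabilized beyond $n=4i-1$, that multiplicity would instead equal the coefficient of $\chi^{(2i-1,2i)}$ in $\alpha_{\{i\}}(\Pi_{4i-1})$, which is $0$ since $(2i-1,2i)$ is not a partition; this contradiction, together with Theorem~\ref{alpha-beta-stabilization-thm}, shows $\{\alpha_{\{i\}}(\Pi_n)\}$ stabilizes sharply at $4i$.

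Finally, for $\beta_{\{i\}}$ I would use \eqref{beta-definition}, which gives $\beta_{\{i\}}(\Pi_n)=\alpha_{\{i\}}(\Pi_n)-\alpha_\emptyset(\Pi_n)=\alpha_{\{i\}}(\Pi_n)-\one_{S_n}$. Since $i\ge 1$ makes $\langle\chi^{(2i,2i)},\one_{S_{4i}}\rangle=0$, and stabilization beyond $4i-1$ would again force the value $0$ at $n=4i$, the identical computation yields $\langle\chi^{(2i,2i)},\beta_{\{i\}}(\Pi_{4i})\rangle=1\ne 0$, so $\{\beta_{\{i\}}(\Pi_n)\}$ also stabilizes sharply at $4i$. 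The only genuinely nonformal point is the small plethysm computation that $s_{(2i)}$ occurs in $h_i[h_2]$ (equivalently, that $\chi^{(2i)}$ occurs in $\one_{S_2\wr S_i}\uparrow^{S_{2i}}$), together with the combinatorial observation that $(2^i)$ is the only orbit able to feed a $\chi^{(2i,2i)}$; everything else is bookkeeping with Lemmas~\ref{Hemmer's-lemma} and~\ref{refined-Hemmer-lemma} and Proposition~\ref{bounds-on-derangement-size-prop}.
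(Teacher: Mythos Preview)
Your proof is correct and follows essentially the same approach as the paper: both reduce $\alpha_{\{i\}}(\Pi_n)$ to a sum of $M_n(\chi_{\hat\lambda})$ over orbits, identify the orbit for $\hat\lambda=(2^i)$ as the one responsible for sharpness, and use that $s_{(2i)}$ occurs in $h_i[h_2]$. The only minor difference is that the paper invokes the full Littlewood expansion of $h_i[h_2]$ together with the second bullet of Lemma~\ref{Hemmer's-lemma}, whereas you track the single multiplicity $\langle\chi^{(2i,2i)},\alpha_{\{i\}}(\Pi_{4i})\rangle$ directly via Lemma~\ref{refined-Hemmer-lemma} and extract the coefficient of $s_{(2i)}$ by a monomial (or Frobenius reciprocity) argument, which is a small economy.
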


\begin{proof}
Since \eqref{alpha-as-sum-of-betas} shows
$
\alpha_{\{i\}}(\Pi_n)
  =\beta_{\{i\}}(\Pi_n)+ \chi^{(n)},
$
the two representations will stabilize sharply
at the same value of $n$, and we need only
prove the assertion for $\alpha_{\{i\}}(\Pi_n)$.  
Similar to the analysis in the previous proof,
$\alpha_{\{i\}}(\Pi_n)$ is a sum of 
representations 
$\one \uparrow_{\Stab_{S_n}(\pi)}^{S_n}$ 
for $S_n$-orbits of set 
partitions $\pi$ in $\Pi_n$ having rank $i$.
In light of Theorem~\ref{alpha-beta-stabilization-thm} we need 
only find one such set partition $\pi_0$ for which 
$\one \uparrow_{G}^{S_n}$,
where $G:=\Stab_{S_n}(\pi_0)$, stabilizes sharply at $n=4i$.

We claim that any $\pi_0$ whose block size number partition
is $(2^i,1^{n-2i})$ will do the trick.  To see this, note
that, by the definition of plethysm given in 
Subsection~\ref{symmetric-function-review-section}, any such $\pi_0$ has
$$
\one_{G} \uparrow_{G}^{S_n}
 \cong  \one_{S_i}[\one_{S_2}] * \one_{S_{n-2i}} 
   = M_n( \one_{S_i}[\one_{S_2}] ). 
$$
On the other hand, applying $\omega$ to \eqref{first-Littlewood-plethysm}
and using \eqref{omega-and-plethysm} gives the expansion
$$
\ch(\one_{S_i}[\one_{S_2}])=h_i[h_2]=\sum_\lambda s_{\lambda}
$$
as $\lambda$ runs through all partitions of $2i$ with all even parts.
This means that it is bounded by $2i$, and sharply so because
the single row $\lambda=(2i)$ occupies $2i$ columns.
Thus Lemma~\ref{Hemmer's-lemma} shows 
that $\one \uparrow_{G}^{S_n}$ stabilizes sharply at $n=4i$.
\end{proof}

Conjecture~\ref{beta-stabilization-conjecture} below 
suggests for each $S$ the sharp onset of stabilization for $\beta_S(\Pi_n)$.

\begin{rk}
Theorem~\ref{beta-stabilization-thm} does not preclude
the possibility for individual irreducible multiplicities
$
\left\langle \, \chi^{(n-|\nu|,\nu)} \, , \, \beta_S(\Pi_n) \, \right\rangle_{S_n}
$
for fixed $S$ to stabilize sooner than $n \geq 4\max(S)$.
\end{rk}

\section{Further questions and remarks}
\label{remark-section}

\subsection{Cohomology of configuration spaces 
in $\RR^d$ need not stabilize fastest}
\label{fastest-stabilization-question}


Church's main tool in \cite{Church}
was the spectral sequence
for the inclusion $\Conf(n,X) \hookrightarrow X^n$, converging
to $H^*(\Conf(n,X))$, and in particular,
Totaro's description \cite{Totaro} 
of its $E_2$-page.  Totaro noted
that $H^*(\Conf(n,\RR^d))$ for configurations of points in 
$\RR^d$ is $S_n$-isomorphic to the $p=0$ column 
$E_2^{0,*}\left(\Conf(n,X) \hookrightarrow X^n \right)$ on the $E_2$-page, 
regardless of the choice of $X$;  see \cite[Lemma 1]{Totaro}. 

For this reason, the authors had wondered whether if,
after fixing $i \geq 1$,
among all connected orientable $d$-manifolds $X$ with 
$\dim_\QQ H^*(X) < \infty$, the cohomology $H^*(\Conf(n,X))$ 
stabilizes earliest for $X=\RR^d$.  
They thank J. Wiltshire-Gordon for pointing out that this {\it fails}
already when $i=1$ with $d=2$ when $X$ a surface of genus $1$, that is,
a $2$-dimensional torus.  Here a direct calculation shows that
the two filtration factors $E_\infty^{1,0}$ and $E_\infty^{0,1}$ for
$H^1(\Conf(n,X))$ have
\begin{itemize}
\item 
$
E_\infty^{0,1}=\ker\left(E_2^{0,1} \overset{d_2}{\rightarrow} E_2^{1,2} \right)
$
vanishing for $n \geq 2$, and
\item 
$E_\infty^{1,0}=H^1(X^n)=M(\chi^{(1)}\oplus\chi^{(1)})$, stabilizing sharply at $n=2$.
\end{itemize}
Thus $H^1(\Conf(n,X))$
stabilizes at $n=2$, while $H^1(\Conf(n,\RR^2))$
stabilizes (sharply) at $n=4$.


\subsection{Tableau model for 
$\widehat{\Lie}^i_n, \widehat{\WH}^i_n$?}

\begin{qn}
\label{Whitney-generators-question}
Can one refine the tableau models in
Theorem~\ref{Whitney-generating-tableaux-theorem},
for the $S_n$-irreducible decomposition of $\widehat{\Lie}_n, \widehat{\WH}_n$,
so as to give a tableau model for each $\widehat{\Lie}^i_n, \widehat{\WH}^i_n$
individually?
\end{qn}

\noindent
In other words, can one model each entry of 
Tables~\eqref{Lie-hat-table}, \eqref{Wiltshire-Gordon-table} 
via shapes of tableaux,
not just the sum across each row?
Perhaps the constraints provided
by Theorems~\ref{Wiltshire-Gordon-Conjecture1},
\ref{W-G-conj-2-euler-char} can help in guessing
such a model.  

Question~\ref{Whitney-generators-question} would 
essentially be answered for both $\widehat{\Lie}_n, \widehat{\WH}_n$
if one had a solution to a more basic question that goes 
back to Thrall \cite{Thrall}; see also \cite[Exer. 7.89(i)]{Stanley}:
\begin{quote}
What is the explicit Schur function expansion of each $\ch(\LLL_\lambda)$,
that is, the $GL(V)$-irreducible decomposition of each higher Lie representation
$\LLL_\lambda(V)$?
\end{quote}

An answer to Question~\ref{Whitney-generators-question}
would help to address the following question,
suggested by computer data.  Recall
that Theorem~\ref{refined-3i+1-bound-thm}
predicts 
$$
f_{i,\nu}(n)
:=\left\langle
\,
\chi^{(n-|\nu|,\nu)}
\, , \, H^{i(d-1)}(\Conf(n,\RR^d))
\,
\right\rangle_{S_n}
$$
becomes a constant in $n$ for $n \geq |\nu|+i$ ($d$ odd) or 
$n \geq |\nu|+i+1$ ($d$ even).

\begin{qn}
\label{precise-Whitney-stabilization-conj-remark}
For $\nu$ with $|\nu| \geq 2$, is there a threshold value $i_0(\nu)$ with the
property that for every $i \geq i_0(\nu)$, regarding 
$f_{i,\nu}(n)$ as a function of $n$, 
it stabilizes {\it sharply} at $n_0=|\nu|+i$ for $d$ odd,
and {\it sharply} at $n_0=|\nu|+i+1$ for $d$ even.
\end{qn}

\subsection{Sharp stability for $\beta_S(\Pi_n)$?}

Some preliminary analysis of $\beta_S(\Pi_n)$ led us to make the following conjecture.

\begin{conj}
\label{beta-stabilization-conjecture}
Given a subset $S \subset \{1,2,\ldots,n-2\}$ with $i=\max(S)$,
the rank-selected homology 
$S_n$-representation $\beta_S(\Pi_n)$ stabilizes
sharply at $n=4i -(|S| - 1)$.
\end{conj}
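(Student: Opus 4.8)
Write $i=\max(S)$, $k=|S|$, and let $a=a(S)=(a_1,\dots,a_k)$ be the composition of $i$ recording the gaps of $S$ (so if $S=\{s_1<\dots<s_k\}$ then $a_1=s_1$ and $a_j=s_j-s_{j-1}$). Extending the reorganization used in the proof of Theorem~\ref{alpha-beta-stabilization-thm}, group the maximal chains of $\Pi_n^T$ by the number $m$ of non-singleton points in their top element; this writes each $\alpha_T(\Pi_n)$ as $\bigoplus_m M_n(A_{T,m})$, where $A_{T,m}$ is the genuine $S_m$-permutation representation on $S_m$-orbits of chains of $\Pi_m^T$ whose top element has no singleton block. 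Since $M_n$ is additive, \eqref{beta-definition} gives
\[
\beta_S(\Pi_n)\;=\;\bigoplus_{m\le 2i} M_n\bigl(B_{S,m}\bigr),\qquad B_{S,m}:=\sum_{T\subseteq S}(-1)^{|S-T|}A_{T,m},
\]
the range $m\le 2i$ being forced by Proposition~\ref{bounds-on-derangement-size-prop}. By Lemmas~\ref{Hemmer's-lemma} and~\ref{refined-Hemmer-lemma}, proving that $\beta_S(\Pi_n)$ stabilizes sharply at $4i-(k-1)$ reduces to: (i) every irreducible $\chi^\mu$ occurring with nonzero coefficient in any $B_{S,m}$ has $|\mu|+\mu_1\le 4i-(k-1)$; and (ii) some $B_{S,m}$ contains a $\chi^\mu$ with $|\mu|+\mu_1=4i-(k-1)$ that cannot be cancelled inside $\beta_S(\Pi_n)$.

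\textbf{The top term $m=2i$, and sharpness.} A chain of $\Pi_{2i}^T$ whose top element $\sigma$ has no singleton block and non-singleton support of full size $2i$ must, by the count in Proposition~\ref{bounds-on-derangement-size-prop}, have all blocks of $\sigma$ of size $2$ (so in particular $\max(T)=i$); moreover the interval $[\hat0,\sigma]$ in $\Pi_{2i}$ is the Boolean lattice $B_i$, with $\mathrm{Stab}_{S_{2i}}(\sigma)=S_2\wr S_i$ acting through the natural $S_i$-action on $B_i$. Unwinding the definitions of induction and plethysm from Section~\ref{symmetric-function-review-section} and identifying $[\hat0,\sigma]\cong B_i$, the alternating sum over $T$ collapses to the coarsening-sum defining a ribbon Schur function, so that
\[
\ch\bigl(B_{S,2i}\bigr)\;=\;r_a[\,h_2\,],\qquad r_a:=\sum_{b}(-1)^{\ell(a)-\ell(b)}h_b,
\]
where $b$ runs over the coarsenings of $a$. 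The ribbon shape of $a$ has exactly $i-(k-1)$ columns, so $r_a$ is bounded by $i-(k-1)$, and sharply so since a one-line Kostka-number check gives $\langle r_a,\,s_{(i-k+1,\,1^{k-1})}\rangle=1$. A short lemma---obtained by tracking the highest power of $x_1$ under the substitution defining $s_\lambda[h_2]$, together with the dominance characterization of Kostka numbers---shows that $s_\lambda[h_2]$ is sharply bounded by $|\lambda|+\lambda_1$. Combining, $r_a[h_2]$ is bounded by $i+(i-(k-1))=2i-(k-1)$, and sharply: being a plethysm of genuine representations, $r_a[h_2]$ is Schur-positive, so no cancellation among the summands $s_\lambda[h_2]$ can destroy the maximal constituent arising from the hook. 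This gives (ii): $B_{S,2i}$ contains a $\chi^\mu$ with $|\mu|=2i$ and $\mu_1=2i-(k-1)$, and since each $B_{S,m}$ lies in $R_m$, no $M_n(B_{S,m})$ with $m<2i$ affects the multiplicity of $\chi^{(n-2i,\mu)}$, so that multiplicity genuinely jumps at $n=|\mu|+\mu_1=4i-(k-1)$. Specializing recovers the two known cases: $S=\{i\}$ gives $a=(i)$, $r_{(i)}=h_i$, $\ch(B_{S,2i})=h_i[h_2]$, sharply bounded by $2i$ (Proposition~\ref{worst-beta-stabilization-thm}); while $S=\{1,\dots,i\}$ gives $a=(1^i)$, $r_{(1^i)}=e_i$, $\ch(B_{S,2i})=e_i[\pi_2]$, sharply bounded by $i+1$ by Theorem~\ref{plethysm-and-Whitney-bounds-cor}(c) (Corollary~\ref{initial-segement-beta-stab}).

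\textbf{The lower terms $m<2i$: the main obstacle.} It remains to establish the upper bound (i) for $m<2i$. When $2m\le 4i-(k-1)$ this is immediate: $B_{S,m}$ is a $\ZZ$-combination of $S_m$-permutation representations, hence bounded by $m$, so any constituent $\mu$ has $|\mu|+\mu_1\le 2m\le 4i-(k-1)$. The difficulty is concentrated in the narrow band $2i-\tfrac{k-1}{2}<m<2i$, where ``bounded by $m$'' is too weak and one must quantify the cancellation in $B_{S,m}=\sum_T(-1)^{|S-T|}A_{T,m}$. The plan is to repeat the $m=2i$ analysis one level down: a top element with non-singleton support of size $m$ and rank $\max(T)$ has block-size partition $\widehat\lambda\vdash m$ with $\ell(\widehat\lambda)=m-\max(T)$ parts, all $\ge 2$, and its interval in $\Pi_m$ below it is a product $\prod_{B}\Pi_{|B|}$; thus $B_{S,m}$ becomes a signed, plethystically assembled combination of rank-selected homologies of such product posets, to which one applies Proposition~\ref{boundedness-inheritance-prop} together with the facts that $\ch\tilde H_*(\Pi_j)$ is bounded by $j-1$ for $j\ge3$ (and $\pi_2=h_2$ by $2$) \emph{and} a ribbon-type saving from the inclusion--exclusion over $T$ within each tensor factor---exactly as $h_a$ was replaced by the ribbon $r_a$ above. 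Showing that these two effects combine to give $|\mu|+\mu_1\le 4i-(k-1)$ uniformly for $m<2i$ is the crux of the problem; I expect the cleanest route is to first derive a single product generating function for $\sum_n\ch(\beta_S(\Pi_n))$ of the Sundaram type---an $r_a$-twisted analogue of Theorem~\ref{product-generating-functions} and Corollary~\ref{hat-product-formulas}---and then apply Proposition~\ref{boundedness-inheritance-prop} to it in one stroke.
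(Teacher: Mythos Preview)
This statement is stated in the paper as an \emph{open conjecture}; the paper offers no proof, only the observation that the two extreme cases $S=\{i\}$ and $S=\{1,\dots,i\}$ are consistent with it. So there is no proof in the paper to compare yours against.

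Your proposal is not a proof either, and you are candid about this: the upper bound (i) for the layers $m<2i$ is exactly the open content of the conjecture, and your final paragraph is a plan, not an argument. The difficulty you flag is real. The virtual characters $B_{S,m}$ for $m<2i$ need not be genuine, so one cannot simply invoke Lemma~\ref{Hemmer's-lemma}; and the ``ribbon-type saving'' you hope to extract from the inclusion--exclusion inside each tensor factor has to interact correctly with the plethystic assembly and with Proposition~\ref{boundedness-inheritance-prop}, which as stated only handles products and plethysms of \emph{nonnegative} inputs. A generating-function approach in the spirit of Corollary~\ref{hat-product-formulas} is plausible but would require new work.

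That said, your analysis of the top layer $m=2i$ is correct and is genuinely new relative to the paper. The identification $\ch(B_{S,2i})=r_a[h_2]$ follows exactly as you say: only $T\ni i$ contribute, the stabilizer is $S_i[S_2]$ acting through $S_i$ on $[\hat0,\sigma]\cong B_i$, and Solomon's description of $\beta_{S\setminus\{i\}}(B_i)$ gives the ribbon $r_a$. Your bound $s_\lambda[h_2]$ sharply bounded by $|\lambda|+\lambda_1$ is right (track the unique monomial $x_1^2$ among the substituted variables), and since $r_a$ is Schur-positive with a hook constituent of first part $i-(k-1)$, the plethysm $r_a[h_2]$ is sharply bounded by $2i-(k-1)$. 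Your non-cancellation argument is also correct once phrased carefully: taking $\nu=\mu$ with $|\nu|=2i$, Lemma~\ref{refined-Hemmer-lemma} forces any contributing $\mu'$ to have $|\mu'|\ge 2i$, hence $\mu'=\mu$, so the multiplicity of $\chi^{(n-2i,\mu)}$ in $\beta_S(\Pi_n)$ jumps from $0$ to $c_\mu>0$ precisely at $n=4i-(k-1)$. This establishes that $\beta_S(\Pi_n)$ does \emph{not} stabilize before $4i-(k-1)$, i.e., the conjectured value is a lower bound for the sharp onset --- already more than the paper proves for general $S$.
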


Note that this would be consistent with the two extreme cases 
where $|S|=1$ or $|S|=i$:
\begin{itemize}
\item
When $S=\{i\}$, 
Theorem~\ref{beta-stabilization-thm} showed 
$\beta_S(\Pi_n)$ stabilizes sharply at $n=4i$.
\item
When $S=\{1,2,\ldots,i\}$, 
Corollary~\ref{initial-segement-beta-stab} showed 
$\beta_S(\Pi_n)$ stabilizes sharply at $n=3i+1$.
\end{itemize}


\subsection{A precise version of
Conjecture~\ref{Wiltshire-Gordon-Conjecture1}}

\label{cochain-complex-conj-remark}
The Orlik-Solomon algebra $A=\bigoplus_{i=1}^{n-1}A^i$,
which was discussed following 
Proposition~\ref{only-one-copy-prop},
in conjunction with the $S_n$-module isomorphism 
$WH_i(\Pi_n) \cong A^i$, also
suggested a
sharpening of Wiltshire-Gordon's 
Conjecture~\ref{Wiltshire-Gordon-Conjecture1}.
There is a well-studied cochain complex structure $(A^\bullet,d)$ on $A$
$$
A^\bullet
=(A^0 \overset{d}{\rightarrow} 
  A^1 \overset{d}{\rightarrow} \cdots
  \overset{d}{\rightarrow} A^{n-2} 
  \overset{d}{\rightarrow} A^{n-1}) 
$$
whose differential $d$ multiplies by an element
$\sum_{1\leq i<j \leq n} c_{ij} a_{ij}$ in $A^1$. 
This complex is exact whenever the coefficients $c_{ij}$
are chosen so that $\sum_{1\leq i<j \leq n} c_{ij}$ lies in $\CC^\times$; 
see, e.g. Dimca and Yuzvinsky \cite[\S 5]{DimcaYuzvinsky}.
Choosing $c_{ij}=1$ for all $i,j$ makes $d$ into an
$S_n$-equivariant cochain complex.  One also obtains
an $S_n$-stable cochain complex structure on
$\widehat{\WH}^\bullet_n$ as the subcomplex at the bottom of
the following decreasing filtration 
\begin{equation}
\label{OS-filtration}
A^\bullet
 = \quad F_{0}(A^\bullet) 
  \supset F_{1}(A^\bullet) 
  \supset \cdots 
  \supset F_{n-1}(A^\bullet)   
  \supset F_{n}(A^\bullet) \quad \cong \widehat{\WH}^\bullet_n
\end{equation}
where $F_p(A^\bullet)$ is the span of the monomials
$a_{i_1,j_1} \cdots a_{i_\ell,j_\ell}$ for which 
$
|\{i_1,j_1\} \cup \cdots \{i_\ell,j_\ell\}| \geq p.
$

\begin{thm}
\label{sharper-WG1-conj}
When $n \ge 2$, the $S_n$-cochain complex $F_n(A^\bullet)$ has
nonvanishing cohomology only in degree $n-1$,
affording the character $\chi^{(2,1^{n-2})}$,
thus affirmatively answering 
Conjecture~\ref{Wiltshire-Gordon-Conjecture1}.
\end{thm}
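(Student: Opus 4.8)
The plan is to prove Theorem~\ref{sharper-WG1-conj} by induction on $n$, computing $H^\bullet(F_n(A^\bullet))$ from the acyclicity of the ambient complex $A^\bullet$ together with the support filtration \eqref{OS-filtration}; the base cases $n=2,3$ (and, if convenient, $n=4$) are checked directly against Table~\eqref{Wiltshire-Gordon-table}. For the inductive step I would first record the structure of the associated graded of $A^\bullet=F_0(A^\bullet)\supseteq\cdots\supseteq F_n(A^\bullet)$. Since multiplication by $\sum_{i<j}a_{ij}$ can only enlarge the support of a monomial, $d$ is filtration-preserving, and the induced differential on $\mathrm{gr}_p(A^\bullet):=F_p(A^\bullet)/F_{p+1}(A^\bullet)$ is the support-preserving part of $d$, namely multiplication by $\sum_{\{i,j\}\subseteq S}a_{ij}$ on the summand indexed by a $p$-subset $S\subseteq\{1,\dots,n\}$. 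Hence
$$
\mathrm{gr}_p(A^\bullet)\ \cong\ \bigoplus_{|S|=p}\widehat{\WH}^\bullet_S ,
$$
where $\widehat{\WH}^\bullet_S$ is the analogue of $F_{|S|}(A(S)^\bullet)$ for the braid arrangement on the ground set $S$. By the inductive hypothesis, for $2\le p\le n-1$ the cohomology $H^\bullet(\widehat{\WH}^\bullet_S)$ is concentrated in degree $p-1$ and equals $\chi^{(2,1^{p-2})}$; thus $H^\bullet(\mathrm{gr}_p(A^\bullet))$ is concentrated in degree $p-1$ and equals $\mathrm{Ind}_{S_p\times S_{n-p}}^{S_n}\bigl(\chi^{(2,1^{p-2})}\boxtimes\one_{S_{n-p}}\bigr)$, while $H^\bullet(\mathrm{gr}_0(A^\bullet))=\one_{S_n}$ in degree $0$ and $\mathrm{gr}_1(A^\bullet)=0$.

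The core of the argument is to climb the filtration from $F_0(A^\bullet)=A^\bullet$ up to $F_n(A^\bullet)$ using the long exact cohomology sequences of $0\to F_{p+1}(A^\bullet)\to F_p(A^\bullet)\to\mathrm{gr}_p(A^\bullet)\to 0$ together with the exactness $H^\bullet(A^\bullet)=0$ (the Aomoto complex of the braid arrangement with total weight $\binom n2\ne 0$; see Dimca--Yuzvinsky \cite[\S 5]{DimcaYuzvinsky}). By a secondary induction on $p$ I would show that $H^\bullet(F_p(A^\bullet))$ is concentrated in cohomological degree $p-1$ for all $0\le p\le n$ (with $p=0$ read as $H^\bullet(A^\bullet)=0$). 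Granting concentration for $F_p(A^\bullet)$, the long exact sequence with $\mathrm{gr}_p(A^\bullet)$ forces $H^\bullet(F_{p+1}(A^\bullet))$ to vanish outside degrees $p-1$ and $p$, to equal $\ker\bigl(H^{p-1}(F_p)\xrightarrow{\ \mu_p\ }H^{p-1}(\mathrm{gr}_p)\bigr)$ in degree $p-1$, and to equal $\mathrm{coker}(\mu_p)$ in degree $p$; so concentration for $F_{p+1}$ is \emph{equivalent} to injectivity of the connecting map $\mu_p$. At the top of the tower $F_n(A^\bullet)=\widehat{\WH}^\bullet_n$, and injectivity of $\mu_{n-1}$ gives the vanishing of $H^{n-2}(\widehat{\WH}^\bullet_n)$ (hence of all $H^i$ with $i\ne n-1$), while $H^{n-1}(\widehat{\WH}^\bullet_n)=\mathrm{coker}(\mu_{n-1})$; its isomorphism class is then pinned down to $\chi^{(2,1^{n-2})}$ either by the Euler-characteristic identity of Theorem~\ref{W-G-conj-2-euler-char} combined with the multiplicity computation of Proposition~\ref{only-one-copy-prop}, or directly by exhibiting the cocycles $\sum_i(-1)^i m^{(i)}$ from \eqref{star-tree-monomials} as spanning $\mathrm{coker}(\mu_{n-1})$.

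I expect the main obstacle to be establishing injectivity of the connecting maps $\mu_p\colon H^{p-1}(F_p(A^\bullet))\to H^{p-1}(\mathrm{gr}_p(A^\bullet))$; this is the one place where degree bookkeeping alone does not suffice, since source and target sit in the same cohomological degree $p-1$. Concretely $\mu_p$ sends a cocycle in $F_p^{p-1}$ to its leading term (the part of support exactly $p$), and $\ker(\mu_p)$ is the image of the comparison map $H^{p-1}(F_{p+1})\to H^{p-1}(F_p)$, so injectivity says precisely that no class of $H^{p-1}(F_p)$ is represented by a cocycle of support $>p$. For $p=2$ this is immediate ($\mu_2$ sends the invariant $\sum_{i<j}a_{ij}$ to itself in $\mathrm{gr}_2^1=A^1$, which is nonzero), and for small $p$ it can be checked by a direct computation with the Orlik--Solomon relations $a_{ij}a_{ik}-a_{ij}a_{jk}+a_{ik}a_{jk}=0$ together with the connectivity of the graph on $2$-subsets. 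The plan is to carry this explicit computation through in general, or — if that becomes unwieldy — to deduce the needed acyclicity of $\widehat{\WH}^\bullet_n$ below its top degree from a structural property of the Orlik--Solomon algebra of the braid arrangement (supersolvability/Koszulness, or Cohen--Macaulayness of $\Pi_n$ together with Proposition~\ref{rank-selected-from-Whitney}), after which Theorem~\ref{W-G-conj-2-euler-char} fixes the remaining top cohomology. Tracking the $S_n$-module structures through the long exact sequences is then routine once each connecting map is known to be zero or injective.
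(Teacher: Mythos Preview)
Your setup matches the paper's: the filtration spectral sequence (equivalently your ladder of long exact sequences) together with the inductive hypothesis on $n$ reduces the theorem to the injectivity of the maps $\mu_p$, which is equivalent to exactness of the complex
\[
0 \to \one_{S_n} \to \chi^{(2)}\!\uparrow_2^n \to \chi^{(2,1)}\!\uparrow_3^n \to \cdots \to \chi^{(2,1^{n-3})}\!\uparrow_{n-1}^n \to H^{n-1}(\widehat{\WH}_n^\bullet) \to 0
\]
(the paper's $C^\bullet(n)$). You correctly flag this as the main obstacle, but you have no mechanism to prove it. Neither fallback works: supersolvability, Koszulness, and Cohen--Macaulayness of $\Pi_n$ all encode exactness of the \emph{full} Aomoto complex $A^\bullet$---which you have already used as input---and say nothing further about the subcomplex $F_n(A^\bullet)$ or about $C^\bullet(n)$; and ``carry the explicit computation through'' is not a plan, since the $\mu_p$ are connecting homomorphisms through inductively-defined cokernels with no evident closed-form description from the Orlik--Solomon relations alone.

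The paper (jointly with Sam) supplies the missing idea. The assignment $n\mapsto C^\bullet(n)$ is an $FI^{\opp}$-complex, since injections $[m]\hookrightarrow[n]$ induce filtered maps $A(n)\to A(m)$; its $\CC$-dual $D_\bullet$ is an $FI$-complex whose terms are the free $FI$-modules $M(\chi^{(2,1^{p-2})})$. Via Schur--Weyl duality in the sense of Sam--Snowden \cite{symc1}, $D_\bullet(n)$ is identified with the $x_1\cdots x_n$-weight space of the minimal free resolution of $\mmm^2$ over $\CC[x_1,x_2,\ldots]$, which is already known to be exact. A simultaneous induction on $n$---using also that at the two rightmost spots $E_1^{n-1,-1}$ and $E_1^{n,-1}$ there is no room for higher differentials, so $E_\infty=0$ forces $\delta_1$-exactness there---then matches $C^\bullet(n)$ with this known exact complex, yielding both the exactness of $C^\bullet(n)$ and $H^{n-1}(\widehat{\WH}_n^\bullet)\cong\chi^{(2,1^{n-2})}$. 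Without a comparison of this kind to an externally-known exact sequence, the injectivity of your $\mu_p$ remains unproven.
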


\noindent
It is natural to approach this using the 
known exactness of $(A^\bullet,d)$ together with 
the spectral sequence associated to the filtration
\eqref{OS-filtration}.
After posting this paper to the {\tt arXiv} with
Theorem~\ref{sharper-WG1-conj} stated as a conjecture,
the authors together with Steven Sam were able to complete a
proof of the theorem, contained here in Appendix~\ref{sharper-conj-section}.

\appendix
\section{Proof of Theorem~\ref{sharper-WG1-conj} (joint with Steven Sam)} 
\label{sharper-conj-section}

\subsection{The Orlik-Solomon algebra}

Recall from Section~\ref{WG1-section} that the Orlik-Solomon algebra of type $A_{n-1}$ is the quotient
\begin{equation}
\label{OS-algebra-definition}
A(n):=E/I
\end{equation}
of an exterior algebra $E$ on  generators
$\{ a_{ij} \}_{1 \leq i < j \leq n}$ (for convenience, we will write $a_{\{i,j\}}$ to mean either $a_{ij}$ or $a_{ji}$ depending on whether $i<j$ or $j<i$), by the ideal $I$ having generators
\begin{equation*}
a_{ij}a_{ik}-a_{ij}a_{jk}+a_{ik}a_{jk}=0 \quad  \text{ for }1\leq i<j<k \leq n.
\end{equation*}
This gives a skew-commutative graded algebra $A(n)=\bigoplus_{i=0}^{n-1} A^i$.
One has a finer grading on $A(n)$ indexed by set partitions $\pi$ in $\Pi_n$
(see \cite[\S 3.1]{OrlikTerao}, \cite[\S 2.3]{DimcaYuzvinsky})
that comes from the direct sum decomposition 
$
E=\bigoplus_{\pi \in \Pi_n} E^\pi
$
where $E^\pi$ is the $\CC$-span of all monomials $a_{i_1 j_1} \cdots a_{i_\ell j_\ell}$
for which the graph on $\{1,2,\ldots,n\}$ with edge set $\{ \{i_1,j_1\},\ldots,\{i_\ell,j_\ell\} \}$ has
connected components given by the blocks of the set partition $\pi$. One checks that the
ideal $I$ decomposes as a direct sum $I=\bigoplus_{\pi \in \Pi_n} (I \cap E^\pi)$,
and hence $A(n)$ inherits the same decomposition 
$$
\begin{aligned}
A(n) &= \bigoplus_{\pi \in \Pi_n} A^\pi \quad \text{ where } A^\pi:=E^\pi/(I \cap E^\pi),\\
WH_i(\Pi_n) \cong A^i&=\bigoplus_{\substack{\pi \in \Pi_n \text{ with}\\ n-i \text{ blocks}}}A^\pi.
\end{aligned}
$$

Each of the above direct sum decompositions also respects the $S_n$-representation defined by $w(a_{ij})=a_{\{w(i),w(j)\}}$.  Multiplication by the $S_n$-invariant element
$\sum_{1\leq i<j \leq n} a_{ij}$ in $A^1$ gives a differential $d$ on $A(n)=A^\bullet$ which is 
$S_n$-equivariant.  As mentioned earlier, this differential on $A^\bullet$ is exact when $n \ge 2$
because its coefficient sum is
$\binom{n}{2}$, which is nonzero in $\CC$; 
see, e.g. Dimca and Yuzvinsky \cite[\S 5]{DimcaYuzvinsky}.

\subsection{The filtration and its spectral sequence}

Recall also the decreasing filtration \eqref{OS-filtration}
$$
A^\bullet
 = \quad F_{0}(A^\bullet) 
  \supset F_{1}(A^\bullet) 
  \supset \cdots 
  \supset F_{n-1}(A^\bullet)   
  \supset F_{n}(A^\bullet) \quad \cong \widehat{\WH}^\bullet_n
$$
in which $F_p(A^\bullet)$ is the $\CC$-span of 
$\{ a_{i_1,j_1} \cdots a_{i_\ell,j_\ell}: 
|\{i_1,j_1\} \cup \cdots \cup \{i_\ell,j_\ell\}| \geq p \}$,
so that
$$
\begin{aligned}
F_p(A^\bullet)
 & :=\bigoplus_{\substack{\pi \in \Pi_n\text{ with at most}\\ n-p\text{ singletons} }} A^\pi,\\
\widehat{\WH}_n^\bullet =F_n(A^\bullet) 
 &\cong \bigoplus_{\substack{\pi \in \Pi_n\text{ with}\\ \text{no singletons}}} A^\pi,
\quad \text{ where } \quad
\widehat{\WH}_n^i \cong  
  \bigoplus_{\substack{\pi \in \Pi_n \text{ with}\\ n-i \text{ blocks}\\
                \text{and no singletons}} } A^\pi.
\end{aligned}
$$
Associated to the decreasing filtration of $A^\bullet$ is a spectral sequence
(see, e.g., Spanier \cite[\S 9.4, p. 493]{Spanier}) 
with differentials 
$
E_r^{p,q} \overset{\delta_r}{\longrightarrow} E_r^{p+r,q-r+1},
$ 
converging to $E^{p,q}_\infty=0$ since $A^\bullet$ is exact.  We next analyze
the first two pages $E_0, E_1$ in this spectral sequence.

\subsection{The $E_0$ and $E_1$-pages}

The spectral sequence starts on its $E_0$ page with
\begin{equation}
\label{E_0-page-description}
E_0^{p,q} 
= F_p(A^{p+q})/F_{p+1}(A^{p+q}) 
\cong \bigoplus_{\substack{\pi \in \Pi_n\text{ with}
                   \\  n-(p+q)\text{ blocks,}
                   \\ n-p\text{ singletons} }} A^\pi
\cong \bigoplus_{\substack{S \subset \{1,2,\ldots,n\}:\\ |S|=n-p}} 
    \bigoplus_{\substack{\pi \in \Pi_n\text{ with} \\ 
                \text{ singleton blocks }S\text{ and} \\
                -q \text{ nonsingleton blocks}}}
                   A^\pi,
\end{equation}
and vertical differentials $\delta_0$ induced from $d$ on $A^\bullet$.  
One can check that the condition in \eqref{E_0-page-description}
that $\pi$ lies in $\Pi_n$ with $n-(p+q)$ blocks,
and $n-p$ singletons forces $E_0^{p,q}=0$ unless 
$p \geq 0$ and $q \leq 0$ (so the terms lie in the second quadrant), and 
that\footnote{If $p \geq 1$, then there are {\it some} nonsingleton blocks, namely $-q$ of them, and they partition the $p$ many nonsingleton elements, so one has $1 \leq -q \leq p/2$.}
$-1 \geq q \geq -\frac{p}{2}$ for $p \geq 1$.

\begin{ex}
For $n=6$, abbreviating
$F_p/F_{p+1}(A^\ell):=F_p(A^\ell)/F_{p+1}(A^\ell)$, the $E_0$-page is
$$
\xymatrix@R=1pc@C=1pc{
q\backslash p &0            &1          &2          &3           &4          &5          &6 \\
0   &F_0/F_1(A^0)           &\cdot          &\cdot          &\cdot           &\cdot          &\cdot          &\cdot\\
-1  &\cdot                      &\cdot           &F_2/F_3(A^1)    &F_3/F_4(A^2)\    &F_4/F_5(A^3)     &F_5/F_6(A^4)      &F_6(A^5)) \\
-2  &\cdot                      &\cdot           &\cdot           &\cdot           &F_4/F_5(A^2)\ar[u]&F_5/F_6(A^3)\ar[u]&F_6(A^4))\ar[u] \\
-3  &\cdot                      &\cdot           &\cdot           &\cdot           &\cdot           &\cdot       &F_6(A^3)\ar[u] \\
}
$$
\end{ex}

We next move on to analyze the $E_1$-page, which has
$E_1^{p,q}=H^{p+q}(F_p/F_{p+1})$.  In particular, since $F_{n+1}=0$
this means that the $p=n$ column is 
$
E_1^{n,q}=H^{n+q}(F_n)=H^{n+q}(\hat{W}^\bullet_n).
$
There are horizontal differentials
$
E_1^{p,q} 
 \overset{\delta_1}{\longrightarrow} 
  E_1^{p+1,q}.
$

To understand this further, 
we analyze each column $E_0^{p,\bullet}$ 
using \eqref{E_0-page-description}.
We consider how the differential $d_0$ acts on a typical 
summand $A^\pi$ on the right in
\eqref{E_0-page-description}, where $\pi$ has $S$ as its set 
of singleton blocks. Since $d_0$ is induced from multiplying by
$a=\sum_{1 \leq i < j \leq n} a_{{i,j}}$, only terms $a_{{i,j}}$ with 
both $i,j \not\in S$ are relevant, and the image has the same set $S$ of singleton 
blocks.  This leads to isomorphisms for $p \geq 1$
\begin{equation*}
\begin{array}{rcccl}
E_0^{p,\bullet} 
&\cong  &
\bigoplus_{\substack{S \subset \{1,2,\ldots,n\}:\\ |S|=n-p}} \hat{W}_p^\bullet
&\cong  &
 \hat{W}_p^\bullet  \uparrow_{p}^{n}  
\\
E_1^{p,q} 
&\cong & H^{p+q}(\hat{W}^\bullet_p \uparrow_p^n ) 
&\cong &  
H^{p+q}(\hat{W}^\bullet_p) \uparrow_{p}^{n}  
\end{array}
\end{equation*}
where the first line gives isomorphisms 
of complexes of $\CC$-vector spaces and $S_n$-representations, respectively,
and $C^\bullet \uparrow_{p}^{n}$ means 
$\left(C^\bullet \otimes \one_{S_{n-p}} \right)\uparrow_{S_p \times S_{n-p}}^{S_n} $ 
for $C^\bullet$ a complex of $S_p$-representations.

\begin{ex} \label{E1-page-example}
For $n=6$ the $E_1$-page is
\begin{equation*}
\xymatrix@R=1pc@C=1pc{
q\backslash p &0            &1          &2          &3           &4          &5          &6 \\
0   &\one_{S_n}            &\cdot          &\cdot          &\cdot           &\cdot          &\cdot          &\cdot\\
-1  &\cdot                      &\cdot           &H^1(\hat{W}_2^\bullet)\uparrow_{2}^{6} \ar[r]   &H^2(\hat{W}_3^\bullet)\uparrow_{3}^{6} \ar[r]   & H^3(\hat{W}_4^\bullet)\uparrow_{4}^{6} \ar[r]     & H^4(\hat{W}_5^\bullet)\uparrow_{5}^{6} \ar[r]      &H^5(\hat{W}_6^\bullet) \\
-2  &\cdot                      &\cdot           &\cdot           &\cdot           &H^2(\hat{W}_4^\bullet)\uparrow_{4}^{6} \ar[r]&H^3(\hat{W}_5^\bullet)\uparrow_{5}^{6} \ar[r]&H^4(\hat{W}_6^\bullet) \\
-3  &\cdot                      &\cdot           &\cdot           &\cdot           &\cdot           &\cdot       &H^3(\hat{W}_6^\bullet) \\
}
\end{equation*}
\end{ex}

\subsection{The $FI$-structures}
Note that Theorem~\ref{sharper-WG1-conj} is equivalent to these two assertions:
\begin{itemize}
\item
the only nonvanishing entries on the $E_1$-page
are the upper-left entry $E_1^{0,0}$ and the $q=-1$ row
$\{E^{p,-1}\}_{p=2}^n=\{H^{p-1}(\hat{W}_p)\uparrow_{p}^{n}\}_{p=2}^{n}$, and
\item 
$H^{p-1}(\hat{W}_p)\cong \chi^{(2,1^{p-2})}$ for $p \geq 1$.
\end{itemize}
We can glue some of these entries together into a complex, which
we will denote $C^\bullet(n)$, using the fact that
the $E_2$ differential 
$E_2^{p,q}\overset{\delta_2}{\rightarrow} E_2^{p+2,q-1}$
sends 
$E_2^{0,0}$ to $\ker(E_2^{2,-1} \overset{\delta_1}{\rightarrow} E_2^{3,-1})$:

\begin{equation*}
\xymatrix@C=0.8pc{
0 \ar[r]
&E_2^{0,0} \ar[r]^{\delta_2} \ar@{=}[d] 
&E_2^{2,-1} \ar[r]^{\delta_1} \ar@{=}[d] 
&E_2^{3,-1} \ar[r]^{\delta_1} \ar@{=}[d] 
&\cdots\ar[r]^{\delta_1}
&E_2^{n-1,-1} \ar[r]^{\delta_1} \ar@{=}[d] 
&E_2^{n,-1} \ar[r] \ar@{=}[d] 
&0 \\
0 \ar[r]
&\one_{S_n} \ar[r]
&H^{1}(\hat{W}_2)\uparrow_{2}^{n} \ar[r]
&H^{2}(\hat{W}_3)\uparrow_{3}^{n} \ar[r]
& \cdots\ar[r]
&H^{n-2}(\hat{W}_{n-1})\uparrow_{n-1}^{n} \ar[r]
&H^{n-1}(\hat{W}_n)\ar[r]
&0\\
&C^0(n)\ar@{=}[u]
&C^1(n)\ar@{=}[u]
&C^2(n)\ar@{=}[u]
&\cdots
&C^{n-2}(n)\ar@{=}[u]
&C^{n-1}(n)\ar@{=}[u]
&
}
\end{equation*}
A crucial step for us will be to eventually show that the complex $C^\bullet(n)$ is exact for $n \geq 2$. The strategy will be to consider {\it $FI$-module} structures
on the objects involved, along with Schur-Weyl duality,
in order to identify $C^\bullet(n)$ with a known exact sequence.

Recall \cite[Def. 1.1]{CEF} 
that $FI$ is a category with objects the
finite sets $[n]:=\{1,2,\ldots,n\}$ for $n=0,1,2,\ldots$, and
whose morphisms are the injective functions
$f \colon [m] \hookrightarrow [n]$ for $m \leq n$.
We denote by $FI^\opp$ its opposite category.

Returning to the definition of the Orlik-Solomon algebra $A(n)=E/I$ from
\eqref{OS-algebra-definition}, for 
each injection $f \colon [m] \hookrightarrow [n]$,
the map on exterior algebras defined by
$$
a_{i,j}\mapsto 
\begin{cases}
a_{\{i',j'\}}&\text{ if } \{i,j\} \subset \im(f)\text{ with }f(i')=i,\ f(j')=j,\\
0&\text{ if either }i\not\in \im(f) \text{ or }j \not\in \im(f),
\end{cases}
$$
will send generators of the Orlik-Solomon ideal for $A(n)$ to those
for $A(m)$, and will commute with the
differentials $d(n)$ on the complexes $A^\bullet(n)$,
preserving the filtration pieces $F^p(A(n))$.  This gives the following result:

\begin{prop} \label{prop:FIop-structure}
$\{A^\bullet(n)\}, 
\{F^\bullet(A(n))\}, 
\{E^{\bullet,\bullet}(F(A(n))\}, 
\{C^\bullet(n)\}
$ 
are functors from $FI^\opp$ into 
complexes, filtered complexes, spectral sequences, and complexes,
respectively.  
\end{prop}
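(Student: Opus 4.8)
The plan is to verify that the pull-back maps $\varphi_f\colon A(n)\to A(m)$ attached (as in the paragraph preceding the proposition) to an injection $f\colon[m]\hookrightarrow[n]$ are well-defined, strictly functorial, and compatible with both the differential and the filtration; the four assertions then follow, the last three by the standard naturality of passing to associated graded complexes, spectral sequences, and kernels.

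First I would settle well-definedness. Since the prescribed images of the degree-one generators square to zero, $\varphi_f$ is a well-defined map of exterior algebras $E(n)\to E(m)$, and the issue is that it carries the Orlik--Solomon ideal $I(n)$ into $I(m)$. To see this, apply $\varphi_f$ to a generating relation $a_{\{i,j\}}a_{\{i,k\}}-a_{\{i,j\}}a_{\{j,k\}}+a_{\{i,k\}}a_{\{j,k\}}$ and split into cases according to how many of $i,j,k$ lie in $\im(f)$: if at most one does, then each term contains a factor mapping to $0$; if exactly two do, then in each of the three terms at least one of the two product-factors maps to $0$; and if all three do, the image is, up to a sign, the triangle relation on the triple $f^{-1}(\{i,j,k\})\subset[m]$, which lies in $I(m)$. (The only subtlety is that $f$ need not be order-preserving, so one must note that all orderings of a triangle relation differ only by signs and hence all lie in $I(m)$ --- a routine sign computation using anticommutativity.) Strict functoriality $\varphi_g\circ\varphi_f=\varphi_{f\circ g}$ for composable injections $[\ell]\xrightarrow{g}[m]\xrightarrow{f}[n]$, and $\varphi_{\mathrm{id}}=\mathrm{id}$, are then immediate from the defining formula on generators; this already gives the functor $\{A^\bullet(n)\}$ from $FI^{\opp}$ into graded algebras, before taking the differential into account.

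Next I would check compatibility with the remaining structure. The key observation is that $\varphi_f$ sends the $S_n$-invariant $\sum_{1\le i<j\le n}a_{ij}$ to the $S_m$-invariant $\sum_{1\le i<j\le m}a_{ij}$, because $f$ induces a bijection from the $2$-subsets of $[m]$ onto the $2$-subsets of $\im(f)$ while every other generator is killed; since $d(n)$ is left multiplication by that invariant, $\varphi_f$ is a cochain map, so $\{A^\bullet(n)\}$ is an $FI^{\opp}$-functor into cochain complexes. Likewise, a monomial with nonzero image has all of its indices in $\im(f)$, and its image is the monomial obtained by relabelling those index pairs via $f^{-1}$, whose index support has the same cardinality; hence $\varphi_f$ preserves the filtration pieces $F_p(A^\bullet(n))$, giving the $FI^{\opp}$-functor into filtered complexes. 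From there, a morphism of filtered complexes functorially induces a morphism of associated graded complexes and hence of the entire associated spectral sequence $E^{\bullet,\bullet}$, commuting with every differential $\delta_r$; and $C^\bullet(n)$, being assembled solely from the entries $E_2^{0,0}$ and $\{E_2^{p,-1}\}_{2\le p\le n}$ together with the natural differentials $\delta_1,\delta_2$, inherits functoriality as well. I expect the only step with any real content to be the well-definedness case analysis, and even there the substance is just the elementary fact that every ordering-variant of a triangle circuit relation already lies in the Orlik--Solomon ideal; everything else is formal naturality.
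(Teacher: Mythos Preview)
Your proposal is correct and follows precisely the approach the paper sketches: the paper gives no formal proof, merely asserting in the paragraph immediately preceding the proposition that the map on exterior algebras ``will send generators of the Orlik--Solomon ideal for $A(n)$ to those for $A(m)$, and will commute with the differentials $d(n)$ on the complexes $A^\bullet(n)$, preserving the filtration pieces $F^p(A(n))$.'' You have supplied exactly the verifications the paper omits --- the case analysis on how many of $i,j,k$ lie in $\im(f)$, the check that $\varphi_f$ carries the invariant element $\sum a_{ij}$ in $A^1(n)$ to its counterpart in $A^1(m)$, and the observation that index-support cardinality is preserved on nonzero monomial images --- and then invoked the standard naturality of associated graded objects and spectral sequences, which is all the paper intends.
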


Hence we can speak of the (infinite) complex of $FI^\opp$-modules
$$
C^\bullet=(0 \rightarrow C^0 \rightarrow C^1 \rightarrow C^2 \rightarrow \cdots )
$$
and consider their $\CC$-duals $D_i(n):=\Hom_\CC(C^i(n),\CC)$
as a complex of $FI$-modules
$$
D_\bullet=(0 \leftarrow D_0 \leftarrow D_1 \leftarrow D_2 \leftarrow \cdots ).
$$
To deduce exactness for $C_\bullet$, we will
show that $D_\bullet(n)$ is exact for $n \geq 2$.  We will
compare $D_\bullet$ to the complex of $FI$-modules $D'_\bullet$ which is
{\it Schur-Weyl dual} in the sense of Sam and Snowden \cite[\S 1]{symc1} 
to the complex of $GL(\CC^\infty)$-modules 
\begin{equation}
\label{EFW-resolution}
0 \leftarrow 
A \leftarrow 
\Schur^{(2)} \otimes A \leftarrow
\Schur^{(2,1)} \otimes A \leftarrow
\Schur^{(2,1,1)} \otimes A \leftarrow
\cdots 
\end{equation}
giving the minimal free resolution\footnote{When $A=\CC[x_1,\ldots,x_n]$,
this resolution \eqref{EFW-resolution} is a special case
of one discussed by Eisenbud, Fl{\o}ystad and Weyman \cite{EisenbudFloystadWeyman} 
and also a special case of the 
Eliahou-Kervaire resolution \cite{EliahouKervaire};
see also \cite[\S 2.3]{MillerSturmfels}} of $\mmm^2$ for 
the irrelevant ideal $\mmm=(x_1,x_2,\ldots)$
in the polynomial ring $A:=\CC[x_1,x_2,\ldots]$; 
see \cite[Example 6.10]{symc1} with $\alpha=(2)$.
Indexing so that $D'_i$ is Schur-Weyl dual 
to the $GL(\CC^\infty)$-module $\Schur^{(2,1^{i-1})} \otimes A$,
then for $n \geq 2$, $D'_\bullet(n)$ is the following
exact sequence of $x_1 x_2 \cdots x_n$-weight spaces from
\eqref{EFW-resolution}:
\begin{equation}
\label{finite-D'-structure}
\xymatrix@C=0.8pc{
0 
&\one_{S_n} \ar[l] \ar@{=}[d]
&\chi^{(2)}\uparrow_{2}^{n} \ar[l] \ar@{=}[d]
&\chi^{(2,1)}\uparrow_{3}^{n} \ar[l] \ar@{=}[d]
& \cdots\ar[l]
&\chi^{(2,1^{n-3})}\uparrow_{n-1}^{n} \ar[l] \ar@{=}[d]
&\chi^{(2,1^{n-2})}\ar[l] \ar@{=}[d]
&0 \ar[l] \ar@{=}[d]\\
&D'_0(n)
&D'_1(n)
&D'_2(n)
&\cdots
&D'_{n-2}(n)
&D'_{n-1}(n)
&D'_{n}(n)
}
\end{equation}

\begin{lem}
\label{key-inductive-exactness-lemma}
One has an isomorphism of the 
$FI$-complexes $D_\bullet \cong D'_\bullet$.  In particular,
for $n \geq 2$,
\begin{itemize}
\item[(a)] $H_{n-1}(\hat{W}_n^\bullet) = \chi^{(2,1^{n-2})}$. 
\item[(b)] $C^\bullet(n)$ is exact.
\end{itemize}
\end{lem}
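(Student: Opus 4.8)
The plan is to prove, by induction on the rank $n$, that the finite complexes $C^\bullet(n)$ and $D'_\bullet(n)^\ast$ are isomorphic, compatibly with the $FI$-structures through rank $n$; the lemma is then immediate, since \eqref{finite-D'-structure} shows $D'_\bullet(n)$ (hence its $\CC$-dual) is exact for $n\geq 2$ with top term $\chi^{(2,1^{n-2})}$, giving both (a) and (b).

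Before the induction I would record two inputs. First, $D'_\bullet$ is exact away from homological degree $0$: by construction it is the $FI$-module Schur--Weyl dual of the Eisenbud--Fl{\o}ystad--Weyman resolution \eqref{EFW-resolution} of $\mmm^2\subset A=\CC[x_1,x_2,\dots]$, so its zeroth homology is the Schur--Weyl dual $N$ of $A/\mmm^2$ (the trivial representation in ranks $0,1$, zero in higher ranks), and $D'_\bullet(n)$ is exact for $n\geq 2$. Second, $C^\bullet(n)$ has $H^0(C^\bullet(n))=H^{n-2}(C^\bullet(n))=H^{n-1}(C^\bullet(n))=0$ for $n\geq 2$: since the spectral sequence of $\bigl(A^\bullet(n),F_\bullet\bigr)$ converges to $H^\ast(A^\bullet(n))=0$, and the only possible higher differentials into or out of the entries at $(0,0)$, $(n-1,-1)$, $(n,-1)$ vanish for degree reasons (the relevant source/target rows or columns lying outside the support of the page), these three cohomology groups equal the corresponding $E_\infty$-entries, i.e. $0$. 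I would also note that each $D'_i$ and each $D_i=\Hom_\CC(C^i,\CC)$, with $D_i(n)=H^i(\hat W^\bullet_{i+1})^\ast\uparrow^n_{i+1}$, is a free $FI$-module generated in rank $i+1$; in particular both complexes are \emph{linear}, hence automatically minimal.

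For the induction (base cases $n\leq 3$ checked directly), assume $C^\bullet(m)\cong D'_\bullet(m)^\ast$ compatibly with the $FI$-structures for all $m<n$. Since a map between free $FI$-modules generated in ranks $i$ and $i+1$ is determined by its rank-$(i+1)$ component, the inductive hypothesis identifies $C^i(n)\cong\chi^{(2,1^{i-1})}\uparrow^n_{i+1}\cong D'_i(n)^\ast$ together with the differentials $C^{i-1}(n)\to C^i(n)$ for all $i\leq n-2$. Exactness of $C^\bullet(n)$ in degrees $n-2$ and $n-1$ (recorded above) then forces $C^{n-1}(n)\cong\operatorname{coker}\bigl(C^{n-3}(n)\to C^{n-2}(n)\bigr)$, which by the inductive identification equals $\operatorname{coker}\bigl(D'_{n-3}(n)^\ast\to D'_{n-2}(n)^\ast\bigr)=D'_{n-1}(n)^\ast=\chi^{(2,1^{n-2})}$, using exactness of $D'_\bullet(n)^\ast$; this proves (a) at rank $n$ and matches the last term of $C^\bullet(n)$ with that of $D'_\bullet(n)^\ast$. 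Finally I would match the last differential $C^{n-2}(n)\to C^{n-1}(n)$: by Frobenius reciprocity and the branching rule, $\Hom_{S_n}\bigl(\chi^{(2,1^{n-3})}\uparrow^n_{n-1},\chi^{(2,1^{n-2})}\bigr)$ is one-dimensional; the $C^\bullet(n)$-map lies in it and is nonzero (otherwise $H^{n-1}(C^\bullet(n))=\chi^{(2,1^{n-2})}\neq0$, contradicting the vanishing above), so it coincides up to a nonzero scalar with the corresponding nonzero map of $D'_\bullet(n)^\ast$. Rescaling gives $C^\bullet(n)\cong D'_\bullet(n)^\ast$, with the $FI$-structure maps matched automatically since they are determined by rank-$\leq n$ data; this completes the induction, and (b) follows from exactness of $D'_\bullet(n)$.

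The main obstacle I foresee is not a single estimate but the bookkeeping that glues the two machines together: one must check carefully that the comparison isomorphisms supplied in low homological degree by the induction are genuinely morphisms of $FI$-modules (so that they are rigidly pinned down by bounded-rank data, via Proposition~\ref{prop:FIop-structure}), and that the spectral-sequence argument really yields the three "boundary" vanishings $H^0=H^{n-2}=H^{n-1}=0$ of $C^\bullet(n)$ with the filtration and its pages indexed consistently. A secondary but nontrivial point is pinning down $D'_\bullet$ explicitly as the Schur--Weyl dual of \eqref{EFW-resolution}, with $N=H_0(D'_\bullet)$ as stated, so that exactness of $D'_\bullet(n)$ for $n\geq2$ is available as an honest input rather than something to be reproven.
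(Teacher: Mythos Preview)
Your proposal is correct and follows essentially the same argument as the paper's proof: both use induction on $n$, exploit that $D_i$ and $D'_i$ are free $FI$-modules generated in rank $i+1$ so that maps between them are determined by low-rank data, invoke the spectral sequence (with its known $E_\infty=0$) to force exactness of $C^\bullet(n)$ at the final two positions $E_1^{n-1,-1}$ and $E_1^{n,-1}$, and then compare with the known exact sequence $D'_\bullet(n)$ to identify the last term as $\chi^{(2,1^{n-2})}$. Your Frobenius-reciprocity step for the last differential is redundant (the cokernel identification already pins it down), and the bookkeeping concerns you flag at the end---that the comparison maps really are $FI$-morphisms and that the $E_0$-support makes the higher differentials vanish at $(0,0)$, $(n-1,-1)$, $(n,-1)$---are exactly the points the paper also relies on.
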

\begin{proof}
We use induction on $n$ to prove 
assertion (a) together with the following assertion equivalent to the 
$FI$-complex isomorphism $D_\bullet \cong D'_\bullet$:
\begin{itemize}
\item[(b${}^\prime$)]
For each $n \ge 1$, one has an isomorphism of the truncated $FI$-complexes
$$
\xymatrix{
0 &D_0 \ar[l] \ar[d] &D_1 \ar[l] \ar[d] & D_2 \ar[l] \ar[d] & 
  \cdots \ar[l]  & D_{n-2} \ar[l] \ar[d] & D_{n-1} \ar[l] \ar[d] \\
0 &D'_0 \ar[l]&D'_1 \ar[l]& D'_2 \ar[l]& 
  \cdots \ar[l] & D'_{n-2} \ar[l] & D'_{n-1} \ar[l]\\
}.
$$
\end{itemize}
The initial cases $n=1,2$ are not hard to check directly.

In the inductive step, assume the assertions (a), (b${}^\prime$) 
hold for $n-1$, and we will show that they hold for $n$.
We claim that both $FI$-complexes $D_\bullet, D'_\bullet$
have the property that the truncations 
to homological degree at most $n-1$ shown in
(b${}^\prime$) are completely determined by their values as
functors on $[n]$.  For $D_\bullet$ this is because
$D_i(n)$ is dual to $C^i(n)=H_i(\hat{W}_{i+1})\uparrow_{i+1}^{n}$,
and for $D'_\bullet$ this comes from its description in 
\eqref{finite-D'-structure}.

However, we claim that taking the values of the functors on $[n]$ in
the diagram (b${}^\prime$) gives
the following diagram with both rows exact, and with all solid vertical
arrows being isomorphisms:
\begin{equation}
\label{inductive-step-diagram}
\xymatrix@C=10pt{
0 &\one_{S_n} \ar[l] \ar[d] &H^1(\hat{W}_2^\bullet)\uparrow_{2}^{n} \ar[l] \ar[d] & H^2(\hat{W}_3^\bullet)\uparrow_{3}^{n} \ar[l] \ar[d] & 
  \cdots \ar[l]  & H^{n-2}(\hat{W}_{n-1}^\bullet)\uparrow_{n-1}^{n} \ar[l] \ar[d] & H^{n-1}(\hat{W}_n^\bullet) \ar[l] \ar@{-->}[d] &0 \ar[l]\\
0 &\one_{S_n} \ar[l]&\chi^{(2)}\uparrow_{2}^{n} \ar[l]& \chi^{(2,1)}\uparrow_{3}^{n} \ar[l]& 
  \cdots \ar[l] & \chi^{(2,1^{n-3})}\uparrow_{n-1}^{n} \ar[l] & \chi^{(2,1^{n-2})} \ar[l] &0 \ar[l]\\
}.
\end{equation}
In \eqref{inductive-step-diagram}, the bottom row is the 
exact sequence \eqref{finite-D'-structure}.  Its top row
is exact in all, except possibly two, entries 
\begin{equation*}
H^{n-2}(\hat{W}_{n-1}^\bullet)\uparrow_{n-1}^{n}
\quad \text{ and }\quad
H^{n-1}(\hat{W}_n^\bullet)
\end{equation*}
using the inductive hypothesis on isomorphism with the bottom row.  
To finish arguing exactness at these
two entries, identify the dual of the top row of 
\eqref{inductive-step-diagram} with $C^\bullet(n)$,
and also with the $q=-1$ row in the $E_1$-page of the spectral sequence as in Example~\ref{E1-page-example}, where
they appear as the two entries $E_1^{n-1,-1}$ and $E_1^{n,-1}$.
Recall that $E^{\bullet,\bullet}_\infty=0$, and note that 
the differentials $\delta_r$ for
$r \geq 2$ that either map into or out of $E_r^{n-1,-1}$ or $E_r^{n,-1}$ are all 
$0$ (as either their source or target is $0$),  so that
$E_r^{n-1,-1} \cong E_{r+1}^{n-1,-1}$ and $E_r^{n,-1}\cong E_{r+1}^{n,-1}$ for all $r\ge 2$.   These observations  combine to  yield the desired result that  the differential
$\delta_1$  must be exact at both $E_1^{n-1,-1}$ and $E_1^{n,-1}$.

Since both rows in \eqref{inductive-step-diagram} are exact,
the dotted vertical map
is also an isomorphism, and both
assertions (a), (b${}^\prime$) for $n$ follow, completing the
inductive step.
\end{proof}

This now lets us easily complete the proof of the theorem.

\begin{proof}[Proof of Theorem~\ref{sharper-WG1-conj}]
We use induction on $n$, with easy base cases when $n \leq 2$. 
We already know from 
Lemma~\ref{key-inductive-exactness-lemma}(a) that
$H^{n-1}(\hat{W}^\bullet_n) \cong \chi^{(2,1^{n-2})}$, and so it
only remains to show that $H^i(\hat{W}^\bullet_n)$ vanishes
for $i \leq n-2$.  In the inductive step, the known vanishing shows
that the $E_1$-page of the spectral sequence looks like this:
\begin{equation}
\label{E_1-page}
\xymatrix@R=1pc@C=1pc{
q\backslash p &0            &1          &2          &3  &\cdots          &n-1          &n \\
0   &1_{S_n}           &\cdot          &\cdot          &\cdot           &\cdots          &\cdot          &\cdot\\
-1  &\cdot            &\cdot           &\chi^{(2)}\uparrow_{2}^{n} \ar[r]    & \chi^{(2,1)}\uparrow_{3}^{n} \ar[r]    & \cdots \ar[r]    & \chi^{(2,1^{n-3})} \uparrow_{n-1}^{n}\ar[r]  & H^{n-1}(\hat{W}_n^\bullet)  \\
-2  &\cdot                      &\cdot           &\cdot           &\cdot           & \cdots  &\cdot  &H^{n-2}(\hat{W}_n^\bullet) \\
\vdots   &     &     &     &     &     &     & \vdots \\
-\lfloor \frac{n}{2} \rfloor  &\cdot                      &\cdot           &\cdot           &\cdot           &\cdots           &\cdot       & H^{\lceil \frac{n}{2}\rceil}(\hat{W}_n^\bullet)\\
\cdot   &  \cdot   &  \cdot   & \cdot    & \cdot    &  \cdots   & \cdot    &\cdot \\
}
\end{equation}

\noindent
But now we also know from Lemma~\ref{key-inductive-exactness-lemma}(b) that $C^\bullet(n)$ is exact, so that 
the $q=-1$ row  of \eqref{E_1-page} is exact after gluing on the $E^{0,0}$ term via $\delta_2$.
Thus most of these entries in the $q=0$ and $q=-1$ row of $E_1$ 
do not survive to the  $E_2$-page;  only $E_2^{0,0}$ and $E_2^{2,-1}$ survive,
but then die at the $E_3$-page.
Hence in the $p=n$ column, one finds that $E_r^{n,q}$ for $q \le -2$ 
have no entries on the pages $E_r$ with $r \geq 1$ that can affect them.
Therefore $H^{n+q}(\hat{W}_n^\bullet)=E_1^{n,q}= \cdots = E_\infty^{n,q}=0$ for each $q \le -2$, as desired.
\end{proof}

\section*{Acknowledgments}
The authors thank 
Alex Becker,
Thomas Church, 
Jordan Ellenberg, 
Benson Farb,
and Jenny Wilson 
for helpful conversations.
They particularly thank 
John Wiltshire-Gordon both for sharing his 
conjectures 
with them in June 2014, as well as the example mentioned
in Section~\ref{fastest-stabilization-question}.
They additionally thank Steven Sam for his crucial
contribution to the proof of Theorem~\ref{sharper-WG1-conj}
appearing in Appendix~\ref{sharper-conj-section}.  Lastly, 
they are very grateful to Sheila Sundaram for many 
helpful discussions and references.

\newpage

\section{Data on $\widehat{\Lie}^i_n, \widehat{\WH}^i_n$}
\label{appendix-section}

We present 
some data on the $S_n$-irreducible decompositions of
$\widehat{\Lie}^i_n$ for small $n,i$ 
to give a sense of the nature of these representations.  
An irreducible character $\chi^\lambda$ is represented by the
Ferrers diagram for $\lambda$, 
so $2\sstableau{{\ }&{\ }&{\ }&{\ }\\{\ }&{\ }}$ in the table 
means $+2\chi^{(4,2)}$.

%
%
%
%
\begin{equation}
\label{Lie-hat-table}
\text{
\begin{tabular}{|c||c|c|c|c|c|c|}\hline
$n \backslash i$ & $0$ & $1$ & $2$ & $3$ & $4$  \\\hline\hline
$0$  & $\varnothing$    &   &   &   &     \\ \hline
$1$  &     &   &   &   &     \\ \hline
$2$  &     & $\sstableau{{\ }\\{\ }}$     &   &   &     \\
  &     &   &   &   &     \\ \hline
$3$  &     &         &$\sstableau{{\ }&{\ }\\{\ }}$      &   &     \\     
     &     &   &   &   &     \\ \hline
$4$  &     &         &$\sstableau{{\ }&{\ }&\\{\  }&{\ }}   \sstableau{{\ }\\{\ }\\{\  }\\{\ }}$
                      & $\sstableau{{\ }&{\ }&{\ }\\{\ }} \quad
                         \sstableau{{\ }&{\ }&\\{\ }\\{\ }}$   &     \\ 
     &     &   &   &   &     \\ \hline
$5$  &     &         &           & 
$\sstableau{{\ }&{\ }\\{\ }\\{\ }\\{\ }} \quad
 \sstableau{{\ }&{\ }&{\ }\\{\ }&{\ }} \quad
\sstableau{{\ }&{\ }&{\ }\\{\ }\\{\ }} \quad
\sstableau{{\ }&{\ }\\{\ }&{\ }\\{\ }}$  
  &$\sstableau{{\ }&{\ }&{\ }&{\ }\\{\ }} \quad
 \sstableau{{\ }&{\ }&{\ }\\{\ }&{\ }} \quad
\sstableau{{\ }&{\ }&{\ }\\{\ }\\{\ }} \quad
\sstableau{{\ }&{\ }\\{\ }&{\ }\\{\ }} \quad
\sstableau{{\ }&{\ }\\{\ }\\{\ }\\{\ }}$     \\ 
     &     &   &   &   &     \\ \hline
$6$  &     &         &           & 
$\sstableau{{\ }\\{\ }\\{\ }\\{\ }\\{\ }\\{\ }} \quad
\sstableau{{\ }&{\ }\\{\ }&{\ }\\{\ }\\{\ }} \quad
\sstableau{{\ }&{\ }&{\ }\\{\ }&{\ }&{\ }}$  
  &$\sstableau{{\ }&{\ }&{\ }&{\ }\\{\ }\\{\ }} \quad
\sstableau{{\ }&{\ }\\{\ }&{\ }\\{\ }\\{\ }} \quad
\sstableau{{\ }&{\ }\\{\ }\\{\ }\\{\ }\\{\ }}$    \\ 
  &     &         &           &  &$ 
\begin{matrix} \\ 3\end{matrix}  \, 
\sstableau{{\ }&{\ }&{\ }\\{\ }&{\ }\\{\ }} \quad
\begin{matrix} \\ 3\end{matrix}  \, 
 \sstableau{{\ }&{\ }&{\ }\\{\ }\\{\ }\\{\ }} \quad
\begin{matrix} \\ 2\end{matrix} \,
\sstableau{{\ }&{\ }\\{\ }&{\ }\\{\ }&{\ }} \quad
\begin{matrix} \\ 2\end{matrix} \,
\sstableau{{\ }&{\ }&{\ }&{\ }\\{\ }&{\ }}$\\
     &     &   &   &   &     \\ \hline
$7$  &     &         &           &   &
$
\sstableau{{\ }&{\ }&{\ }&{\ }\\{\ }&{\ }\\{\ }} \quad
\sstableau{{\ }&{\ }&{\ }&{\ }\\{\ }&{\ }&{\ }} \quad
\sstableau{{\ }&{\ }\\{\ }&{\ }\\{\ }&{\ }\\{\ }} \quad
\sstableau{{\ }&{\ }&{\ }\\{\ }\\{\ }\\{\ }\\{\ }} \quad
\begin{matrix} \\ 2\end{matrix}  \, 
 \sstableau{{\ }&{\ }&{\ }\\{\ }&{\ }\\{\ }\\{\ }}
$    \\ 
  &     &         &           &   &
$
\sstableau{{\ }&{\ }\\{\ }&{\ }\\{\ }\\{\ }\\{\ }} \quad
\sstableau{{\ }&{\ }&{\ }\\{\ }&{\ }\\{\ }&{\ }} \quad
\sstableau{{\ }&{\ }&{\ }\\{\ }&{\ }&{\ }\\{\ }} \quad
\sstableau{{\ }&{\ }\\{\ }\\{\ }\\{\ }\\{\ }\\{\ }} 
$    \\ 
     &     &   &   &   &     \\ \hline
$8$  &     &         &           &   &
$\sstableau{{\ }&{\ }&{\ }&{\ }\\{\ }&{\ }&{\ }&{\ }} \quad
\sstableau{{\ }&{\ }&{\ }\\{\ }&{\ }&{\ }\\{\ }\\{\ }} \quad
\sstableau{{\ }&{\ }\\{\ }&{\ }\\{\ }&{\ }\\{\ }&{\ }} \quad
\sstableau{{\ }&{\ }\\{\ }&{\ }\\{\ }\\{\ }\\{\ }\\{\ }} \quad
\sstableau{{\ }\\{\ }\\{\ }\\{\ }\\{\ }\\{\ }\\{\ }\\{\ }} 
$    \\ 
     &     &   &   &   &     \\ \hline
\end{tabular}
}
\end{equation}

\newpage

We similarly present 
some data on the $S_n$-irreducible decompositions of
$\widehat{\WH}^i_n$ for small $n,i$:

\begin{equation}
\label{Wiltshire-Gordon-table}
\text{
\begin{tabular}{|c||c|c|c|c|c|c|}\hline
$n \backslash i$ & $0$ & $1$ & $2$ & $3$ & $4$  \\\hline\hline
$0$  & $\varnothing$    &   &   &   &     \\ \hline
$1$  &     &   &   &   &     \\ \hline
$2$  &     & $\sstableau{{\ }&{\ }}$     &   &   &     \\ \hline
$3$  &     &         &$\sstableau{{\ }&{\ }\\{\ }}$      &   &     \\     
     &     &   &   &   &     \\ \hline
$4$  &     &         &$\sstableau{{\ }&{\ }&{\ }\\{\ }}$   
                      & $\sstableau{{\ }&{\ }&{\ }\\{\ }} \quad
                         \sstableau{{\ }&{\ }&\\{\ }\\{\ }}$   &     \\ 
     &     &   &   &   &     \\ \hline
$5$  &     &         &           & 
$\sstableau{{\ }&{\ }&{\ }&{\ }\\{\ }} \quad
 \sstableau{{\ }&{\ }&{\ }\\{\ }&{\ }} \quad
\sstableau{{\ }&{\ }&{\ }\\{\ }\\{\ }} \quad
\sstableau{{\ }&{\ }\\{\ }&{\ }\\{\ }}$  
  &$\sstableau{{\ }&{\ }&{\ }&{\ }\\{\ }} \quad
 \sstableau{{\ }&{\ }&{\ }\\{\ }&{\ }} \quad
\sstableau{{\ }&{\ }&{\ }\\{\ }\\{\ }} \quad
\sstableau{{\ }&{\ }\\{\ }&{\ }\\{\ }} \quad
\sstableau{{\ }&{\ }\\{\ }\\{\ }\\{\ }}$     \\ 
     &     &   &   &   &     \\ \hline
$6$  &     &         &           & 
$\sstableau{{\ }&{\ }&{\ }&{\ }\\{\ }\\{\ }} \quad
\sstableau{{\ }&{\ }&{\ }\\{\ }&{\ }&{\ }}$  
  &$\sstableau{{\ }&{\ }&{\ }&{\ }&{\ }\\{\ }} \quad
\begin{matrix} \\ 2\end{matrix} \,
\sstableau{{\ }&{\ }&{\ }&{\ }\\{\ }&{\ }} \quad
\begin{matrix} \\ 2\end{matrix}  \,
\sstableau{{\ }&{\ }&{\ }&{\ }\\{\ }\\{\ }} \quad
\sstableau{{\ }&{\ }&{\ }\\{\ }&{\ }&{\ }}$    \\ 
  &     &         &           &  &$ 
\begin{matrix} \\ 3\end{matrix}  \, 
\sstableau{{\ }&{\ }&{\ }\\{\ }&{\ }\\{\ }} \quad
\begin{matrix} \\ 2\end{matrix}  \, 
 \sstableau{{\ }&{\ }&{\ }\\{\ }\\{\ }\\{\ }} \quad
\sstableau{{\ }&{\ }\\{\ }&{\ }\\{\ }&{\ }} \quad
\sstableau{{\ }&{\ }\\{\ }&{\ }\\{\ }\\{\ }}$\\
     &     &   &   &   &     \\ \hline
$7$  &     &         &           &   &
$
\sstableau{{\ }&{\ }&{\ }&{\ }&{\ }\\{\ }&{\ }} \quad
\sstableau{{\ }&{\ }&{\ }&{\ }&{\ }\\{\ }\\{\ }} \quad
\sstableau{{\ }&{\ }&{\ }&{\ }\\{\ }&{\ }&{\ }} \quad
\begin{matrix} \\ 2\end{matrix}  \, 
 \sstableau{{\ }&{\ }&{\ }&{\ }\\{\ }&{\ }\\{\ }}
$    \\ 
  &     &         &           &   &
$
\sstableau{{\ }&{\ }&{\ }&{\ }\\{\ }\\{\ }\\{\ }} \quad
\sstableau{{\ }&{\ }&{\ }\\{\ }&{\ }&{\ }\\{\ }} \quad
\sstableau{{\ }&{\ }&{\ }\\{\ }&{\ }\\{\ }&{\ }} \quad
\sstableau{{\ }&{\ }&{\ }\\{\ }&{\ }\\{\ }\\{\ }} 
$    \\ 
     &     &   &   &   &     \\ \hline
$8$  &     &         &           &   &
$\sstableau{{\ }&{\ }&{\ }&{\ }&{\ }\\{\ }\\{\ }\\{\ }} \quad
\sstableau{{\ }&{\ }&{\ }&{\ }\\{\ }&{\ }&{\ }\\{\ }}$    \\ 
     &     &   &   &   &     \\ \hline
\end{tabular}
} 
\end{equation}

\end{document}